\documentclass[reqno]{amsart}

%
%

\usepackage{amsmath,times}
\usepackage{amsthm,amssymb}
\usepackage[all]{xy}
\setcounter{tocdepth}{1}
\usepackage{latexsym}
\usepackage{stackrel}
\usepackage{graphicx}
\usepackage{color}
\usepackage{enumerate}
\usepackage{float}
\usepackage{array}
\newcolumntype{L}{>{$}l<{$}} 

\usepackage{comment}
\usepackage[textsize=tiny]{todonotes}

\usepackage{hyperref}
\hypersetup{
    colorlinks=true, 
    linktoc=all,     
    linkcolor=blue,  
}
\theoremstyle{definition}

\newtheorem{theorem}{Theorem}[section]
\newtheorem{lemma}[theorem]{Lemma}
\newtheorem{claim}[theorem]{Claim}
\newtheorem{definitionlemma}[theorem]{Definition-Lemma}

\newtheorem{proposition}[theorem]{Proposition}

\theoremstyle{definition}
\newtheorem{example}[theorem]{Example}
\newtheorem{definition}[theorem]{Definition}
\newtheorem{corollary}[theorem]{Corollary}
\newtheorem{remark}[theorem]{Remark}
\newtheorem{question}[theorem]{Question}
\newtheorem{remarks}[theorem]{Remarks}

\numberwithin{equation}{section}

\newcommand{\C}{\mathbb{C}}
\newcommand{\Nef}{\operatorname{Nef}}

\newcommand{\N}{\mathbb{N}}
\renewcommand{\P}{\mathbb{P}}
\newcommand{\Q}{\mathbb{Q}}
\newcommand{\R}{\mathbb{R}}
\newcommand{\Z}{\mathbb{Z}}
\newcommand{\sub}{\operatorname{sub}}
\newcommand{\Hom}{\operatorname{Hom}}

\newcommand{\cO}{\mathcal{O}}
\newcommand{\rank}{\operatorname{rk}}


\title[{H}odge-{R}iemann bilinear relations for {S}chur classes of ample vector bundles]{{H}odge-{R}iemann bilinear relations for \\ {S}chur classes of ample vector bundles}

\author{Julius Ross and Matei Toma}

\address{Department of Mathematics, Statistics, and Computer Science, University of Illinois at Chicago, 322 Science and Engineering Offices (M/C 249), 851 S. Morgan Street, Chicago, IL 60607
 }
\email{juliusro@uic.edu}

\address{Universit\'e de Lorraine, CNRS, IECL, F-54000 Nancy, France
 }
\email{matei.toma@univ-lorraine.fr}

\keywords{ 14C17, 14J60, 32J27, 52A40}
\date{\today}

\begin{document}
\begin{abstract}Let $X$ be a $d$ dimensional projective manifold, $E$ be an ample vector bundle on $X$ and $0\le \lambda_N\le \lambda_{N-1} \le \cdots \le \lambda_1 \le \rank(E)$  be a partition of $d-2$.  We prove that the Schur class $s_{\lambda}(E)\in H^{d-2,d-2}(X)$ has the Hard Lefschetz property and 
satisfies the Hodge-Riemann bilinear relations.  As a consequence we obtain various new inequalities between characteristic classes of ample vector bundles, including a higher-rank version of the Khovanskii-Teissier inequalities.
\end{abstract}

\maketitle

\section{Introduction}

As is well known, Hodge Theory on projective manifolds has a number of deep topological consequences.   
The two basic examples of this are the Hard Lefschetz Theorem which implies that if $L$ is an ample line bundle on a projective manifold $X$ of dimension $d$, and $k\le d$ is chosen so $d-k$ is even then the map
$$ H^{\frac{d-k}{2},\frac{d-k}{2}}(X;\mathbb R) \xrightarrow{\wedge c_1(L)^k} H^{\frac{d+k}{2},\frac{d+k}{2}}(X;\mathbb R)$$
is an isomorphism, and the Hodge-Riemann bilinear relations which state that the bilinear form
$$ (\alpha,\alpha') \mapsto (-1)^{\frac{d-k}{2}}\int_X \alpha c_1(L)^k \alpha' \text{ for } \alpha,\alpha'\in H^{\frac{d-k}{2},\frac{d-k}{2}}(X;\mathbb R)$$
is positive definite on the primitive cohomology $$H^{\frac{d-k}{2},\frac{d-k}{2}}_p(X;\mathbb R):=\{\alpha:  \alpha\wedge c_1(L)^{k+1}=0\}.$$

Given the importance of these results it is natural to question if these properties continue to hold when $c_1(L)^k$ is replaced by some other class in $H^{k,k}(X;\mathbb R)$.  One result in this direction is that of Bloch-Gieseker \cite{BlochGieseker} which implies that if $E$ is an ample vector bundle of rank $e\le d$ on $X$ with $d-e$ even then $c_e(E)$ has the Hard Lefschetz property, i.e.\ the map 
$$ H^{\frac{d-e}{2},\frac{d-e}{2}}(X;\mathbb R) 
\xrightarrow{\wedge c_e(E)}
H^{\frac{d+e}{2},\frac{d+e}{2}}(X;\mathbb R)$$
is an isomorphism. 

The main result of this paper extends this statement, when $e=d-2$,  to show that in fact the Hodge-Riemann bilinear relations also hold for $c_e(E)$, and furthermore generalizes it to all Schur classes. This is the following


\begin{theorem}[= Theorem \ref{thm:schurhodgeriemann}]\label{thm:intromain}
Let $E$ be a rank $e$ ample vector bundle on a projective manifold $X$ of dimension $d$, let $h\in H^{1,1}(X,\mathbb Z)$ be an ample class and set $c_i:=c_i(E)$.     Given $$0\le \lambda_N\le \lambda_{N-1}\le \cdots \le\lambda_1\le e$$ with $\sum_i \lambda_i =d-2$ consider the Schur class
$$s_{\lambda}(E) = \det \left(\begin{array}{cccc}c_{\lambda_1} & c_{\lambda_1+1} & \cdots & c_{\lambda_1+N-1}\\
c_{\lambda_2-1} & c_{\lambda_2} & \cdots & c_{\lambda_2+N-2}\\
\ldots & \ldots & \ldots & \ldots  \\
c_{\lambda_N-N+1} & c_{\lambda_N-N+2} & \cdots & c_{\lambda_N}\\
\end{array}\right) \in H^{d-2,d-2}(X,\mathbb R).$$
Then
\begin{enumerate}
\item The Hard Lefschetz Property holds for $s_{\lambda}(E)$.  That is, the map 
$$ H^{1,1}(X;\mathbb R) \to H^{d-1,d-1}(X;\mathbb R) \quad \alpha \mapsto \alpha \wedge s_{\lambda}(E)$$
is an isomorphism.
\item The Hodge-Riemann bilinear relations hold for $s_{\lambda}(E)$.  That is, the intersection pairing
$$ (\alpha,\alpha') \mapsto \int_X \alpha s_{\lambda}(E) \alpha' \text{ for } \alpha,\alpha'\in H^{1,1}(X;{\mathbb R})$$
is negative definite on the primitive cohomology
$$H^{1,1}_{p,\lambda}(X;{\mathbb R})  : =\{ \alpha : \int_X \alpha\wedge s_{\lambda}(E)\wedge h=0\}.$$
\end{enumerate}
\end{theorem}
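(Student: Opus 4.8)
The plan is to repackage both conclusions as a single statement about the signature of a symmetric form, and then to deform $s_\lambda(E)$ to a class whose signature is classical. Write $V=H^{1,1}(X;{\mathbb R})$, set $n=\dim V$, and for $\omega\in H^{d-2,d-2}(X;{\mathbb R})$ let $Q_\omega(\alpha,\beta)=\int_X\alpha\,\omega\,\beta$ be the associated symmetric bilinear form on $V$. The first observation is that (1) and (2) together are equivalent to the single assertion that $Q_{s_\lambda(E)}$ is nondegenerate of signature $(1,n-1)$ with $h$ spanning the positive line. Indeed, nondegeneracy is exactly the Hard Lefschetz property of part (1) (composing $\alpha\mapsto\alpha\wedge s_\lambda(E)$ with Poincar\'e duality gives $Q_{s_\lambda(E)}$); and since Schur classes of nef bundles are numerically positive (Fulton--Lazarsfeld), $\int_X h^2 s_\lambda(E)>0$, so $h$ lies in the positive part and its $Q$-orthogonal complement $h^{\perp}=H^{1,1}_{p,\lambda}(X;{\mathbb R})$ carries the remaining $(0,n-1)$ of the signature, which is precisely the negative definiteness of part (2). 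Because the signature of $Q_\omega$ is locally constant on the open locus where $Q_\omega$ is nondegenerate, it suffices to join $s_\lambda(E)$ to a class of known signature by a path lying entirely in that locus.

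For the anchor I would use the classical Hard Lefschetz theorem and Hodge--Riemann relations for a power of a single ample class: $Q_{c_1(L)^{d-2}}$ has signature $(1,n-1)$ for any ample $L$. To reach such a class I propose the one-parameter twist family $\omega_t:=s_\lambda\bigl(E\otimes L^{\otimes t}\bigr)$, defined for all real $t\ge 0$ by substituting the Chern roots $a_i\mapsto a_i+t\,c_1(L)$ into the Schur polynomial; this is a polynomial path in $H^{d-2,d-2}(X;{\mathbb R})$ with $\omega_0=s_\lambda(E)$. Expanding in $t$, the top-degree term is $s_\lambda(c_1(L),\dots,c_1(L))\,t^{d-2}=c\,c_1(L)^{d-2}\,t^{d-2}$ with $c>0$, so for $t\gg0$ the class $\omega_t$ is a small perturbation of a positive multiple of $c_1(L)^{d-2}$ and hence lies in the nondegenerate locus with signature $(1,n-1)$. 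If one can guarantee that $\omega_t$ never leaves the nondegenerate locus on $[0,\infty)$, then local constancy of the signature transports the value $(1,n-1)$ from $t\gg0$ back to $\omega_0=s_\lambda(E)$, proving the theorem.

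The first ingredient for this is the Hard Lefschetz property itself (part (1)) for Schur classes of arbitrary ample bundles, which I would establish by extending the Bloch--Gieseker covering and push-forward technique \cite{BlochGieseker} from the top Chern class $c_e$ to all Schur classes; this supplies nondegeneracy of $Q_{\omega_t}$ at every rational $t=p/q\ge 0$, since after passing to a cyclic cover $\mu\colon X'\to X$ with $\mu^*L=(L')^{\otimes q}$ the class $\mu^*\omega_{p/q}$ is the genuine Schur class $s_\lambda\bigl(\mu^*E\otimes(L')^{\otimes p}\bigr)$ of an ample bundle on $X'$. Consequently $t\mapsto\det Q_{\omega_t}$ is a polynomial that is nonzero on the dense set of nonnegative rationals.

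The main obstacle is exactly to upgrade this density to nonvanishing for all real $t\ge0$: a priori $\det Q_{\omega_t}$ could possess an isolated irrational root at which the form degenerates and the signature jumps, so the good rational values alone do not suffice, and the difficulty is intertwined with the fact that the full Hodge--Riemann locus is genuinely non-convex (one cannot simply interpolate two positive classes linearly). The crux of the proof is therefore a robust form of the Hard Lefschetz property for ample ${\mathbb R}$-twisted Schur classes --- strong enough to force $\det Q_{\omega_t}$ to have constant sign on the whole ray --- which is where ampleness, through a limiting and strictness refinement of the Bloch--Gieseker argument, must enter essentially. Once no degeneration occurs along $[0,\infty)$, the connected nondegenerate ray carries the constant signature $(1,n-1)$, yielding both (1) and (2) at $t=0$.
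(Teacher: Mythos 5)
Your reduction of (1) and (2) to the single statement that $Q_{s_\lambda(E)}$ has signature $(1,n-1)$ is correct (this is Definition-Lemma \ref{deflemma:HRofquadraticform}, together with Fulton--Lazarsfeld for $\int_X h^2 s_\lambda(E)>0$), and your deformation argument along $t\mapsto s_\lambda(E\langle t\,c_1(L)\rangle)$ is exactly how the paper proves the base case: Proposition \ref{prop:cn2I} runs this argument verbatim for $c_{d-2}(E)$ when $\operatorname{rk}(E)=d-2$, where the nondegeneracy of $Q_{\omega_t}$ for \emph{every real} $t\ge 0$ is supplied by the Bloch--Gieseker theorem for ample $\mathbb{R}$-twisted bundles (Theorem \ref{thm:blochgiesekerI}). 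But that theorem only gives injectivity of $\wedge c_s(E)$ for $s=\min\{e,d\}$, i.e.\ for the \emph{top} Chern class of a bundle of rank $\le d$. Already for $c_{d-2}(E)$ with $\operatorname{rk}(E)\ge d-1$ it does not apply, and for a general Schur class $s_\lambda(E)$ there is no Bloch--Gieseker-type nondegeneracy statement to invoke. The step you describe as ``extending the Bloch--Gieseker covering and push-forward technique to all Schur classes'' is therefore not a routine extension but the entire content of the theorem: the Bloch--Gieseker argument hinges on representing $c_e(E)$ by the smooth zero locus of a section and applying a Lefschetz theorem for such zero loci, and the analogous injectivity $H^2(X)\to H^2(D_\lambda)$ for a two-dimensional degeneracy locus $D_\lambda$ representing $s_\lambda(E)$ is not available in the required generality. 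Your own paragraph flags this as ``the main obstacle,'' and it is never closed; as written the argument only proves the case $s_\lambda=c_{d-2}$, $\operatorname{rk}(E)=d-2$.

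The paper closes this gap by an entirely different mechanism rather than by a global deformation to $c_1(L)^{d-2}$. For $c_{d-2}(E)$ of higher rank it inducts on $\operatorname{rk}(E)-d+2$ using the product $(X\times\mathbb{P}^1, E\boxtimes\mathcal{O}_{\mathbb{P}^1}(1))$ together with the block-form quadratic-form inequality of Proposition \ref{prop:quadraticformonedimensionhigher}, which yields the stronger Hodge-Index-type inequality \eqref{eq:hodgeindexiii}. For general $s_\lambda(E)$ it writes the Schur class as a cone class $\pi_*c_f(U|_C)$ in $\mathbb{P}_{\sub}(V^*\otimes E)$ following Fulton--Lazarsfeld and runs the induction of Theorem \ref{thm:maincone} on hyperplane sections of the base, alternating between the properties $(A_i)$ and $(B_i)$; the only deformations used there are infinitesimal (first-order in $t$ at $t=0$), exploiting that $\mathcal{F}_i$ and $\mathcal{G}_i$ are $\le 0$ for nef twists, not a path all the way to $t=\infty$. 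If you want to salvage your approach you would need to prove nondegeneracy of $Q_{s_\lambda(E')}$ for every ample $\mathbb{R}$-twisted $E'$ directly, and that statement is (as the paper itself points out) not in the literature and is in effect what Theorem \ref{thm:maincone} establishes, as a consequence of rather than an input to the Hodge--Riemann property.
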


The above theorem is in the same spirit as that of Fulton-Lazarsfeld \cite{FultonLazarsfeld} who consider such Schur classes when $\sum_i \lambda_i=d$ and prove that if $E$ is ample then $\int_X s_{\lambda}(E)>0$.    From this point of view one can also view Theorem \ref{thm:intromain} as a statement about 
positivity properties enjoyed by ample vector bundles.

As an application we partially answer a question posed by Debarre-Ein-Lazarsfeld-Voisin \cite{DELV11} (also Lehmann-Fulger \cite{Lehmann}) concerning the relation between the cone spanned by Schur classes of nef bundles and the cone of positive higher codimensional cycles.   In summary, we show that the former cone is strictly contained in the nef cone of codimension 2 cycles on the product of a very general  principally polarized  abelian surface with itself.

\begin{center}
*
\end{center}

The classical Hodge-Riemann bilinear relations  are known to imply the Hodge-Index inequality as well as many generalisations, and wrapped up in our account of Theorem \ref{thm:intromain} are a number of similar such inequalities.  We list two now, the second of which is particularly striking.

\begin{theorem}[= Theorem \ref{thm:HI2}]\label{thm:HI2intro}
Let $X$ be a projective manifold of dimension $d\ge2$, let $E$ be an ample bundle on $X$  with $\rank(E)\ge d-1$ and let $h$ be an ample class on $X$. Then for any $\alpha\in H^{1,1}(X;\mathbb R)$ 
\begin{equation}\int_{X} \alpha^2 c_{d-2}(E) \int_{X} h c_{d-1}(E) \le 2
\int_{X} \alpha c_{d-2}(E) h \int_X \alpha c_{d-1}(E)\label{eq:introineq2}\end{equation}
with equality if and only if $\alpha=0$.
\end{theorem}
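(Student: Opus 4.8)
The plan is to deduce \eqref{eq:introineq2} from the Hodge–Riemann relations for the single Schur class $s_{(d-2)}(E)=c_{d-2}(E)$, which are available by Theorem~\ref{thm:intromain} since $\rank(E)\ge d-1\ge d-2$. Introduce the symmetric bilinear form
\[
Q(\alpha,\beta):=\int_X \alpha\,\beta\, c_{d-2}(E)\qquad(\alpha,\beta\in H^{1,1}(X;\mathbb R)).
\]
By part (1) of Theorem~\ref{thm:intromain} the map $\alpha\mapsto \alpha\, c_{d-2}(E)$ is an isomorphism, so $Q$ is nondegenerate; by part (2) it is negative definite on the primitive space $h^{\perp_Q}$. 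Combined with the positivity $Q(h,h)=\int_X h^2 c_{d-2}(E)>0$ of Fulton–Lazarsfeld, this shows $Q$ has Lorentzian signature $(1,b-1)$ with $h$ in the positive cone. Moreover every ample class $a$ satisfies $Q(a,a)=\int_X a^2 c_{d-2}(E)>0$ and $Q(a,h)>0$, so the whole ample cone lies in the future timelike cone of $Q$.

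Next I would linearize the $c_{d-1}$–terms. By the Hard Lefschetz isomorphism there is a unique $\xi\in H^{1,1}(X;\mathbb R)$ with $\xi\, c_{d-2}(E)=c_{d-1}(E)$ in $H^{d-1,d-1}(X;\mathbb R)$, whence $\int_X \beta\, c_{d-1}(E)=Q(\xi,\beta)$ for all $\beta$. Thus \eqref{eq:introineq2} is equivalent to
\[
Q(\alpha,\alpha)\,Q(\xi,h)\le 2\,Q(\alpha,h)\,Q(\xi,\alpha).
\]
I would then write $\alpha=t\,h+\alpha_0$ and $\xi=s\,h+\xi_0$ with $\alpha_0,\xi_0\in h^{\perp_Q}$ and $s=Q(\xi,h)/Q(h,h)>0$ (using $Q(\xi,h)=\int_X h\,c_{d-1}(E)>0$), and expand. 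Setting $P=Q(h,h)>0$, $a=Q(\alpha_0,\alpha_0)\le 0$ and $m=Q(\alpha_0,\xi_0)$, the inequality reduces to the scalar statement $P s\,t^2+2m\,t-s\,a\ge0$ for all $t\in\mathbb R$. Its discriminant is controlled by the Cauchy–Schwarz inequality for the positive definite form $-Q$ on $h^{\perp_Q}$, namely $m^2\le(-a)\,(-Q(\xi_0,\xi_0))$, and a short computation shows the quadratic is nonnegative precisely when $Q(\xi,\xi)=s^2P+Q(\xi_0,\xi_0)\ge0$. Tracking the equality cases (in Cauchy–Schwarz and at the vertex of the quadratic) shows that if $Q(\xi,\xi)>0$ then equality in \eqref{eq:introineq2} forces $\alpha_0=0$ and $t=0$, i.e.\ $\alpha=0$.

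Everything therefore reduces to the single positivity
\[
Q(\xi,\xi)=\int_X \xi\, c_{d-1}(E)=\int_X \xi^2\, c_{d-2}(E)>0,
\]
i.e.\ to the statement that $\xi$ lies in the future timelike cone of $Q$. This is the crux, and the step I expect to be the main obstacle. Fulton–Lazarsfeld positivity already gives $Q(\xi,a)=\int_X a\, c_{d-1}(E)>0$ for every ample $a$, so $\xi$ pairs positively under $Q$ with the entire ample cone; but since the ample cone can be strictly smaller than the timelike cone, this alone does not yield $Q(\xi,\xi)>0$, and it is precisely here that the hypothesis $\rank(E)\ge d-1$ must be used. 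I would attack it by reducing to a surface: cutting $X$ by general members of $|mh|$ (or passing to a Bloch–Gieseker cover) to bring the computation down to $d=2$, where $c_{d-2}$ becomes $1$, the class $\xi$ becomes $c_1(E|_S)=c_1(\det E|_S)$, which is ample, so that $\int_S c_1(\det E|_S)^2>0$ is immediate. The genuine work — and the part most likely to resist a quick argument — is verifying that this reduction is compatible with the Hard Lefschetz definition of $\xi$, which is made globally on $X$ rather than on the slice.
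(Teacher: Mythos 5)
Your reduction of \eqref{eq:introineq2} to the single positivity $Q(\xi,\xi)=\int_X\xi\, c_{d-1}(E)>0$ is correct and cleanly done, and you have rightly identified that this positivity is the entire content of the theorem: given the Lorentzian signature of $Q$ and $Q(\xi,h)>0$, the validity of \eqref{eq:introineq2} for all $\alpha$ is \emph{equivalent} to $Q(\xi,\xi)\ge 0$ (take $\alpha_0=\xi_0$ in your discriminant computation). But the proposal does not prove this positivity, and the route you sketch for it fails. Cutting by general members of $|mh|$ produces a surface $S$ with $[S]=m^{d-2}h^{d-2}$, so $\int_S(\xi|_S)^2=m^{d-2}\int_X\xi^2 h^{d-2}$, which is not the quantity $\int_X\xi^2 c_{d-2}(E)=Q(\xi,\xi)$ you need; and $\xi|_S$ is not $c_1(E|_S)$, since that would amount to $c_1(E)\,c_{d-2}(E)=c_{d-1}(E)$ in $H^{d-1,d-1}(X;\mathbb R)$, which is false in general. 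There is also a circularity in the starting point: you invoke Theorem \ref{thm:intromain} for the Hodge--Riemann property of $c_{d-2}(E)$ when $\rank(E)\ge d-1$, but in the paper that property (part (2) of Theorem \ref{thm:HI2}) is \emph{deduced from} the inequality \eqref{eq:introineq2}; only the rank $d-2$ case (Proposition \ref{prop:cn2I}) is available independently, via a deformation and signature-continuity argument.

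The paper obtains the missing positivity from extra geometry rather than from the form $Q$ on $X$ alone. It runs an induction on $j=\rank(E)-d+2$: the Hodge--Riemann property at relative rank $j-1$, applied to $E\boxtimes\mathcal O_{\mathbb P^1}(1)$ on $X\times\mathbb P^1$, gives the Hodge--Riemann property of the block form $Q_W=\bigl(\begin{smallmatrix}Q_V&\phi^t\\ \phi&0\end{smallmatrix}\bigr)$ on $H^{1,1}(X;\mathbb R)\oplus\mathbb R\tau$, where $Q_V=j\int_X(\cdot)\,c_{d-2}(E)\,(\cdot)$ and $\phi=\int_X(\cdot)\,c_{d-1}(E)$; the elementary Proposition \ref{prop:quadraticformonedimensionhigher} then extracts exactly \eqref{eq:introineq2} from the signature $(1,\rho)$ of $Q_W$. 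In your language, it is the Lorentzian property of $Q_W$ --- not of $Q_V$ --- that forces $\xi$ into the closed forward cone; that information comes from one dimension and one unit of rank away, and it cannot be recovered from the Hodge--Riemann property of $c_{d-2}(E)$ on $X$ together with Fulton--Lazarsfeld positivity on the ample cone, as your own remark about the ample cone being smaller than the timelike cone already shows.
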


Notice also that \eqref{eq:introineq2} implies that the bilinear form $(\alpha,\alpha')\mapsto \int_X \alpha c_{d-2}(E)\alpha'$  is negative definite on the subspace $\{ \alpha : \int_X \alpha c_{d-1}(E)=0\}$ (from which the Hodge-Riemann bilinear relations follow easily). 

\begin{theorem}[= Theorem \ref{thm:logconcaveschur}]\label{thm:intrologconvex}
Let $X$ be a projective manifold of dimension $d$, let $E$ be an ample bundle on $X$  with $\rank(E)\ge d$ and let $h$ be an ample class on $X$.  Then the map
$$ i\mapsto \int_X c_i(E) h^{d-i} \text{ for } i=0,\ldots, d$$
is strictly log-concave.
That is, given integers $0\le i<j<k\le d$ and defining $t$ so
$$ ti + (1-t)k=j$$
we have
$$ t \log \int_X c_i(E) h^{d-i} + (1-t) \log \int_X c_{k}(E) h^{d-k} < \log \int_X c_{j}(E) h^{d-i}.$$
\end{theorem}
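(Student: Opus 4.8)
The plan is to reduce the general interpolation inequality to two facts: positivity of each $a_i:=\int_X c_i(E)h^{d-i}$, and the strict two-step (Khovanskii--Teissier) inequality $a_i^2>a_{i-1}a_{i+1}$ for $1\le i\le d-1$. Indeed, the asserted inequality says precisely that $i\mapsto\log a_i$ lies above its chords, i.e.\ that the piecewise-linear interpolation of the points $(i,\log a_i)$ is concave; once the $a_i$ are positive this is equivalent to nonpositivity of the consecutive second differences $\log a_{i-1}-2\log a_i+\log a_{i+1}$, and strictness of the two-step inequalities propagates to strictness of every chord. Positivity is the easy input: for $i\ge1$ I would intersect with $d-i$ general members of $|mh|$ (with $mh$ very ample) to obtain a smooth $Y\subset X$ of dimension $i$ with $[Y]=m^{d-i}h^{d-i}$, so that $a_i=m^{-(d-i)}\int_Y c_i(E|_Y)$; since $c_i(E|_Y)$ is the top Chern class $s_{(1^i)}(E|_Y)$ of the ample bundle $E|_Y$ on the $i$-dimensional $Y$, the Fulton--Lazarsfeld positivity theorem \cite{FultonLazarsfeld} gives $\int_Y c_i(E|_Y)>0$, while $a_0=\int_Xh^d>0$.

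For the two-step inequality the plan is to extract it from the Hodge--Riemann bilinear relations of Theorem \ref{thm:intromain}, which is the higher-rank substitute for the Hodge index theorem that drives the classical Khovanskii--Teissier inequalities. Because Theorem \ref{thm:intromain} is stated in the exact-codimension range $|\lambda|=\dim-2$, I would again pass to a general complete-intersection subvariety $Y=X\cap\{d-i-1\text{ members of }|mh|\}$ of dimension $i+1$, on which the ample bundle $E|_Y$ has rank $\ge\dim Y$ and the Schur class in play has degree $\dim Y-2$. There the numbers $a_{i-1},a_i,a_{i+1}$ are recovered, up to the fixed power of $m$, as pairings of the form $\int_Y(\,\cdot\,)s_\lambda(E|_Y)(\,\cdot\,)$ against classes built from $h$, so that negative-definiteness on primitive cohomology (equivalently, the signature $(1,\ast)$ of the pairing) becomes available. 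The determinantal Jacobi--Trudi identity $s_{(i,i)}(E)=c_i(E)^2-c_{i-1}(E)c_{i+1}(E)$ is what makes the combination $a_i^2-a_{i-1}a_{i+1}$ visible as (the leading part of) a $2\times2$ Gram determinant, and the Hodge-index inequalities already recorded in Theorem \ref{thm:HI2intro}, transported to $Y$, supply the precise comparisons between consecutive Chern intersection numbers.

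The main obstacle, and the point where genuinely new input beyond the classical argument is needed, is the Chern-index shift. In the classical Khovanskii--Teissier setting the three numbers $\int\alpha^{i\pm1}\beta^{d-i\mp1}$ and $\int\alpha^i\beta^{d-i}$ are the three entries of the Gram matrix of a single Hodge--Riemann form $\int(\,\cdot\,)\alpha^{i-1}\beta^{d-i-1}(\,\cdot\,)$ on the two-plane $\langle\alpha,\beta\rangle$, precisely because consecutive terms differ by multiplication by one ample class. Here consecutive terms differ by passing from $c_{i-1}(E)$ to $c_i(E)$ to $c_{i+1}(E)$, and there is in general no class in $H^{1,1}(X)$ whose product realizes such a shift; consequently $a_i^2\ge a_{i-1}a_{i+1}$ is \emph{not} a direct instance of a single two-plane Hodge-index inequality. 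Overcoming this is where I expect the effort to lie: one must either assemble the inequality from the family of Hodge-index estimates of Theorem \ref{thm:HI2intro} (one for each consecutive pair, chained together via the determinantal identity above), or organize the whole sequence into the binary form $\sum_i\binom{d}{i}a_i s^{d-i}t^i$ and prove its hyperbolicity directly from the Hodge--Riemann relations. I expect the bookkeeping relating the mixed Chern intersection numbers to the Gram data of the Hodge--Riemann form to be the principal technical difficulty, with positivity and the reduction to consecutive inequalities being routine by comparison.
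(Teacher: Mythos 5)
Your outer scaffolding matches the paper's: positivity of $a_i:=\int_X c_i(E)h^{d-i}$ plus the consecutive three-term inequality $a_{i-1}a_{i+1}<a_i^2$ does imply strict log-concavity (this is exactly Lemma \ref{lem:concavityformal}), and your positivity argument via hyperplane sections and Bloch--Gieseker/Fulton--Lazarsfeld is fine. But the entire content of the theorem is the three-term inequality, and there you have correctly diagnosed the obstruction --- no class in $H^{1,1}(X)$ realizes the shift $c_{i-1}\mapsto c_i\mapsto c_{i+1}$ --- without overcoming it. Neither of your two proposed escapes is carried out, and the first is unworkable as stated: $a_i^2-a_{i-1}a_{i+1}$ is a difference of \emph{products of integrals}, not the integral of $s_{(i,i)}(E)=c_i^2-c_{i-1}c_{i+1}$ against anything, so the Jacobi--Trudi identity does not make it a Gram determinant; and the inequality of Theorem \ref{thm:HI2intro} only ever couples the two consecutive classes $c_{\dim-2}$ and $c_{\dim-1}$ (indeed, on your $(i+1)$-dimensional $Y$ with $\alpha=h$ it degenerates to the triviality $AB\le 2AB$), so no chain of those estimates produces a relation among three consecutive Chern numbers.

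The paper's resolution is a product trick that makes your second suggestion (hyperbolicity of the binary form $\sum_j c_{d-j}(E)\tau^j$) geometric. Write $e=d+k$, pass to $X'=X\times\mathbb P^{k+2}$ and $E'=E\boxtimes\mathcal O_{\mathbb P^{k+2}}(1)$, which is ample of rank $e=\dim X'-2$; then
$$c_e(E')=c_d(E)\,\tau^{k}+c_{d-1}(E)\,\tau^{k+1}+c_{d-2}(E)\,\tau^{k+2},$$
and by Theorem \ref{thm:intromain} this class has the Hodge--Riemann property on $X'$. The hyperplane class $\tau$ of the auxiliary factor is precisely the $(1,1)$-class that performs the Chern-index shift: the Gram matrix of $(\beta,\beta')\mapsto\int_{X'}\beta\, c_e(E')\,\beta'$ on the two-plane spanned by $\tau$ and the pullback of $h$ has entries $\int_X c_d(E)$, $\int_X c_{d-1}(E)h$, $\int_X c_{d-2}(E)h^2$, so the ordinary two-plane Hodge-index inequality (Definition-Lemma \ref{deflemma:HRofquadraticform}(4)) gives exactly the three-term estimate, with strictness because $h$ is not proportional to $\tau$; cutting $X$ down by general members of $|h|$ then yields every consecutive triple (Proposition \ref{prop:logconcave} and the proof of Theorem \ref{thm:logconcaveschur}). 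Without this step your argument has a genuine gap at its central point.
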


 One should think of this statement a higher rank version of the famous Khovanskii-Teissier inequalities (see Remark \ref{rem:Khovanskii}).\\

It is possible to generalise this log-concavity to other Schur classes as follows.   For any partition $\mu$ the Schur polynomial  $s_{\mu}(x_1,\ldots,x_e)$ is a symmetric polynomial,  from which we may define new symmetric polynomials $s_{\mu}^{(i)}$ by requiring
$$ s_{\mu}(x_1+t,\ldots,x_e+t) = \sum_{i=0}^{|\lambda|} s_{\mu}^{(i)}(x_1,\ldots,x_e) t^i\text{ for all }t\in \mathbb R.$$
So if $x_1,\ldots,x_e$ are the Chern roots of a bundle $E$ on $X$ we have characteristic classes
$$s_{\mu}^{(i)}(E)\in H^{|\mu|-i,|\mu|-i}(X;\mathbb R).$$

\begin{theorem}[= Theorem \ref{thm:logconcaveschur}]\label{thm:intrologschur}
Let $X$ be a projective manifold of dimension $d$, let $E$ be an ample bundle on $X$  with $e=\rank(E)\ge d$ and let $h$ be an ample class on $X$.   Also let $0\le \mu_N\le \cdots \le \mu_1\le e$ be a partition of $e$.

 Then the map
$$ i\mapsto \int_X s_{\mu}^{(e-i)}(E) h^{d-i} \text{ for } i=0,\ldots, d$$
is strictly log-concave.
\end{theorem}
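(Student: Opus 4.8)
The plan is to reduce the strict log-concavity of the sequence $a_i:=\int_X s_\mu^{(e-i)}(E)h^{d-i}$ (for $i=0,\dots,d$) to the family of strict three-term inequalities $a_j^2>a_{j-1}a_{j+1}$ at each interior index $j$, since strict concavity of $i\mapsto\log a_i$ is equivalent to these together with positivity of the terms. Positivity $a_i>0$ is immediate from Fulton--Lazarsfeld: writing $s_\mu(x_1+t,\dots,x_e+t)=\sum_i s_\mu^{(i)}(x)t^i$ and expanding $s_\mu^{(e-i)}$ in the Schur basis, each $s_\mu^{(e-i)}(E)$ is a nonnegative combination of Schur classes $s_\nu(E)$ with $|\nu|=i$, so it pairs strictly positively with $h^{d-i}$.

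The engine for the three-term inequalities is the Hodge--Riemann property of Theorem \ref{thm:schurhodgeriemann}, used as a reverse Cauchy--Schwarz inequality. For a degree-$(d-2)$ class $\Omega$ with the Hard Lefschetz property and the Hodge--Riemann relations, the form $q_\Omega(\alpha,\beta):=\int_X\alpha\beta\Omega$ on $H^{1,1}(X;\mathbb{R})$ is nondegenerate with exactly one positive eigendirection (spanned by $h$, since $\int_X h^2\Omega>0$ by Fulton--Lazarsfeld); hence $q_\Omega(\alpha,\beta)^2\ge q_\Omega(\alpha,\alpha)q_\Omega(\beta,\beta)$ whenever $q_\Omega(\alpha,\alpha)>0$, with equality only for $\alpha,\beta$ proportional. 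The cleanest instance is when the three terms already involve a degree-one Schur class: on a surface ($d=2$), taking $\alpha=s_\mu^{(e-1)}(E)$, $\beta=h$ and $\Omega=s_\mu^{(e)}(E)$ (a positive constant), reverse Cauchy--Schwarz gives $\big(\int_X s_\mu^{(e-1)}(E)h\big)^2\ge \int_X s_\mu^{(e-1)}(E)^2\cdot\int_X h^2$, and one closes the gap by Fulton--Lazarsfeld positivity of the \emph{discriminant} class $\big(s_\mu^{(e-1)}\big)^2-s_\mu^{(e)}s_\mu^{(e-2)}$, which is Schur-positive (for $\mu=(1^e)$ it equals $s_{(2)}$).

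To bring the general interior inequality into such a shape I would cut down by $h$. Choosing $Y=H_1\cap\cdots\cap H_{d-j-1}$ a smooth complete-intersection slice with $H_\ell\in|mh|$ generic ($m\gg0$, by Bertini), $E|_Y$ remains ample and characteristic classes restrict, so $a_i=m^{-(d-j-1)}\int_Y s_\mu^{(e-i)}(E|_Y)(h|_Y)^{\,j+1-i}$ for $i=j-1,j,j+1$. Thus $a_j^2>a_{j-1}a_{j+1}$ is equivalent to the \emph{top} three-term inequality on the $(j{+}1)$-fold $Y$, namely $\big(\int_Y Bh'\big)^2>\int_Y A\cdot\int_Y C(h')^2$, where $n=\dim Y=j+1$, $E'=E|_Y$, $h'=h|_Y$, and $A=s_\mu^{(e-n)}(E')$, $B=s_\mu^{(e-n+1)}(E')$, $C=s_\mu^{(e-n+2)}(E')$ have degrees $n,n-1,n-2$.

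The main obstacle is this top inequality for $n\ge3$. Here $A,B,C$ are linked by the polarization $s_\mu^{(k+1)}=\tfrac{1}{k+1}\partial_h s_\mu^{(k)}$, which is a differential rather than a cup-product operation, so one cannot shift the superscript by multiplying by a fixed class. I would apply reverse Cauchy--Schwarz with $\Omega=C$ (which has the correct degree $n-2$) and the pair $h'$, $u$, where $u$ is a degree-one class normalized so that $q_C(u,h')=\int_Y Bh'$, and then bound $q_C(u,u)\ge\int_Y A$ by Fulton--Lazarsfeld positivity of the residual Schur-positive combination $u^2C-A$; together with $q_C(h',h')=\int_Y C(h')^2$ this yields $\big(\int_Y Bh'\big)^2\ge\int_Y A\cdot\int_Y C(h')^2$. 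Two points must be overcome: (i) establishing the Hodge--Riemann property for $\Omega=C=s_\mu^{(e-n+2)}(E')$, which is a \emph{nonnegative combination} of Schur classes and so does not follow formally from Theorem \ref{thm:schurhodgeriemann} (one would obtain it by the same deformation/positivity method, or by realizing $s_\mu^{(k)}$ as a genuine Schur class of an auxiliary twist of $E'$); and (ii) producing the normalized $u$ and verifying Schur-positivity of the higher-degree residual discriminant, the analogue of $\big(s_\mu^{(e-1)}\big)^2-s_\mu^{(e)}s_\mu^{(e-2)}$. I expect (i) --- the robustness of Hodge--Riemann for the twisted/combined classes $s_\mu^{(k)}$ --- to be the crux.
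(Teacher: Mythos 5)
Your reduction to the three-term inequalities $a_j^2>a_{j-1}a_{j+1}$, the hyperplane cut to a slice $Y$ of dimension $n=j+1$, and the idea of using the Hodge--Riemann property as a reverse Cauchy--Schwarz inequality are all in line with the paper's argument (Theorem \ref{thm:logconcaveschur} is proved by restricting to a smooth $Y$ representing $h^{d-i}$ and invoking a ``top'' three-term inequality, Proposition \ref{prop:logconcave}, plus the formal Lemma \ref{lem:concavityformal}). But the two points you flag as the crux are genuine gaps, and the second one is not just technical: there is no reason for a $(1,1)$-class $u$ on $Y$ to exist with $q_C(u,h')=\int_Y Bh'$ and $u^2C-A$ effective, because $A=s_\mu^{(e-n)}$ and $B=s_\mu^{(e-n+1)}$ are not of the form $u^2C$ and $uC$ for any $(1,1)$-class. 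Likewise, the Hodge--Riemann property for $C=s_\mu^{(e-n+2)}(E|_Y)$ does not follow from Theorem \ref{thm:schurhodgeriemann} (there $\lambda$ is a partition of $\dim-2$, whereas your $\mu$ is a partition of $e$), and the paper's Remark \ref{rem:convexcombinations} shows that nonnegative combinations of Schur classes can genuinely fail Hodge--Riemann, so ``Schur-positivity'' of $C$ is not a substitute. The asserted Schur-positivity of the discriminant $\bigl(s_\mu^{(e-1)}\bigr)^2-s_\mu^{(e)}s_\mu^{(e-2)}$ in your surface case is likewise unproved and is essentially the statement you are trying to establish.

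The paper's resolution is a product trick that manufactures exactly the auxiliary class $u$ you are missing, but on a larger space. Writing $e=d+k$, set $X'=X\times\mathbb P^{k+2}$, let $\tau$ be the hyperplane class of the second factor, and put $E'=E\boxtimes\mathcal O(\tau)$, which is ample of rank $e=\dim X'-2$. Then $s_\mu(E')$ is a \emph{genuine} Schur class of degree $\dim X'-2$ of an ample bundle, so Theorem \ref{thm:schurhodgeriemann} applies to it directly --- no Hodge--Riemann statement for the derived classes $s_\mu^{(k)}$ themselves is ever needed. By definition of the derived Schur classes,
$$s_{\mu}(E') = s_{\mu}^{(k)}(E)\,\tau^k + s_{\mu}^{(k+1)}(E)\,\tau^{k+1}+ s_{\mu}^{(k+2)}(E)\,\tau^{k+2},$$
so the Hodge--Index inequality for $s_\mu(E')$ applied to the pair $(\alpha,\tau)$ with $\alpha\in H^{1,1}(X;\mathbb R)$ reads
$$\int_{X}\alpha^2 s_\mu^{(k+2)}(E)\cdot\int_{X}s_\mu^{(k)}(E)\ \le\ \Bigl(\int_{X}\alpha\, s_\mu^{(k+1)}(E)\Bigr)^{2},$$
with equality iff $\alpha$ is proportional to $\tau$, i.e.\ $\alpha=0$. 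The class $\tau$ plays precisely the role of your normalized $u$: multiplication by powers of $\tau$ shifts the superscript of the derived Schur class, which is exactly the ``differential rather than cup-product'' operation you correctly identified as the obstruction on $X$ itself. Taking $\alpha=h|_Y$ on the slices $Y$ then gives all the interior three-term inequalities, and positivity of the end terms is handled by the same trick one dimension lower ($E\boxtimes\mathcal O(1)$ on $X\times\mathbb P^k$ plus Fulton--Lazarsfeld, Lemma \ref{lem:FLforlowerschur}).
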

We remark when $\mu$ is the partition given by  $\{\mu_1=e=\rank(E)\}$ 
then $s_{\mu}^{(e-i)}(E)  = c_{i}(E)$, and Theorem \ref{thm:intrologschur} becomes Theorem \ref{thm:intrologconvex}.

\begin{center}
*
\end{center}

The Hodge-Riemann bilinear relations  we have discussed above turn out to be closely related to an elementary piece of linear algebra.  Let $V$ be a complex vector space of dimension $d$ and fix a lattice $U$ in $V$.  Write $T = \Hom_{\mathbb C}(V,\mathbb C)$ and let $T^{p,q}  = \Lambda^p T \otimes \Lambda^q \overline{T}$ be the space of $(p,q)$ forms on $V$.   
Then $T^{p,p}$ is the space of sesquilinear forms on $\Lambda^p V$. 
By a \emph{K\"ahler form} $\omega$ on $V$ we mean a real strictly positive element of $T^{1,1}$.  We say that  $\omega$  is {\em rational} if its corresponding alternating skew-symmetric form on the underlying real vector space $V_\R$ of $V$ takes values in $\Q$, on $U\times U$  (see Section \ref{sec:Dihn} for further definitions and conventions).

\begin{corollary}[= Corollary \ref{cor:linearalgebra1}]\label{cor:linearalgebra1:intro}
Let $\omega_1,\ldots,\omega_e$ be rational K\"ahler forms on $V$ and  let $(\lambda,e,d)$ be in the same range as required by Theorem \ref{thm:intromain}.   Then the Schur form 
$$s_{\lambda}(\omega_1,\ldots,\omega_e)$$ has the Hodge-Riemann property. In particular the linear map
$$T^{1,1}\to T^{d-1,d-1}, \ \eta\mapsto \eta\wedge s_{\lambda}(\omega_1,\ldots,\omega_e),$$ 
is invertible.
\end{corollary}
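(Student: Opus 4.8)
The plan is to realize the given linear-algebra data as the Chern data of an ample vector bundle on an abelian variety, and then to invoke Theorem~\ref{thm:intromain} directly. First I would form the quotient $A:=V/U$, a compact complex torus of dimension $d$. Translation-invariant $(p,q)$-forms on $A$ are naturally identified with $T^{p,q}$; these invariant forms represent all of $H^{p,q}(A)$, and under this identification the real elements of $T^{1,1}$ correspond to $H^{1,1}(A;\mathbb R)$ while the wedge of invariant forms corresponds to the cup product. Since each $\omega_j$ is rational, its associated alternating form is $\mathbb Q$-valued on $U\times U$, so I can choose a single positive integer $m$ clearing all denominators at once; then each $m\omega_j$ is a positive invariant $(1,1)$-form whose associated form is integral on $U\times U$. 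By the theorem of Appell--Humbert (equivalently the Lefschetz description of the N\'eron--Severi group of a torus) together with positivity, there is for each $j$ an ample line bundle $L_j$ on $A$ with $c_1(L_j)$ represented by $m\omega_j$. In particular $A$ carries a polarization and is therefore a projective manifold; this is the step where the rationality hypothesis is essential.

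Next I would assemble the bundle $E:=\bigoplus_{j=1}^e L_j$. A direct sum of ample line bundles is ample, so $E$ is an ample bundle of rank $e$ on $A$, and its Chern roots are $c_1(L_1),\dots,c_1(L_e)$. Because the Schur class of a split bundle is the Schur polynomial evaluated on its Chern roots, and because this agrees with the determinantal Schur form of Section~\ref{sec:Dihn} applied to the representing invariant forms, one obtains
$$ s_\lambda(E)=[\,s_\lambda(m\omega_1,\dots,m\omega_e)\,]=m^{\,d-2}\,[\,s_\lambda(\omega_1,\dots,\omega_e)\,], $$
where the last equality uses homogeneity of $s_\lambda$ of degree $|\lambda|=d-2$.

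I would then apply Theorem~\ref{thm:intromain} to $E$ on $A$, which yields the Hard Lefschetz property and the Hodge--Riemann bilinear relations for $s_\lambda(E)$. Transporting these statements through the identification of invariant forms with cohomology, and dividing by the positive scalar $m^{d-2}$ (which alters neither property), gives the Hodge--Riemann property for $s_\lambda(\omega_1,\dots,\omega_e)$ on the real subspace of $T^{1,1}$. Finally, the complex map $T^{1,1}\to T^{d-1,d-1}$ sending $\eta\mapsto\eta\wedge s_\lambda(\omega_1,\dots,\omega_e)$ is the complexification of the real isomorphism $H^{1,1}(A;\mathbb R)\to H^{d-1,d-1}(A;\mathbb R)$ furnished by Hard Lefschetz, and a real linear isomorphism complexifies to a complex linear isomorphism; hence it is invertible, which is the asserted conclusion.

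The \emph{main obstacle} is not analytic but definitional: one must confirm the compatibility of conventions, namely that the determinantal Schur form on $V$ defined in Section~\ref{sec:Dihn} coincides with the Schur class of the split bundle $\bigoplus_j L_j$ when the $\omega_j$ represent the $c_1(L_j)$, and one must verify that rationality of the $\omega_j$ is precisely what guarantees that $A$ is projective so that Theorem~\ref{thm:intromain} applies. Once these two points are pinned down, the remainder is the standard dictionary between translation-invariant forms on an abelian variety and multilinear algebra on $V$.
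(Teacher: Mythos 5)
Your proposal is correct and follows essentially the same route as the paper: pass to the torus $V/U$ under the identification $H^q(X,\Omega^p)\cong T^{p,q}$, use rationality to clear denominators so that the scaled forms are first Chern classes of ample line bundles, take their direct sum as the ample bundle $E$ with $s_\lambda(E)$ a positive multiple of $s_\lambda(\omega_1,\dots,\omega_e)$, and apply Theorem~\ref{thm:intromain}. The paper leaves these steps more implicit, but your filling in of the details (the choice of a single denominator $m$, Appell--Humbert, homogeneity of $s_\lambda$, and the complexification remark for invertibility) matches its intended argument.
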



The idea of the proof is to consider a suitable torus quotient $X$ of $V$ chosen so that $H^q(X,\Omega^p) \simeq T^{p,q}$.  We use the assumption that each $\omega_i$ is rational to find an ample vector bundle $E$ on $X$ such that $s_{\lambda}(E) = s_{\lambda}(\omega_1,\ldots,\omega_e)$ (up to scaling by a positive number).  Then Theorem \ref{thm:intromain} applied to $E$ gives Corollary \ref{cor:linearalgebra1:intro}.

We conjecture that Corollary \ref{cor:linearalgebra1:intro} continues to hold if we relax the hypothesis that the $\omega_i$ be rational,  but note that the technique used in the above proof fails as there is no longer a natural ample vector bundle $E$.  Nevertheless we have in this direction the following partial result:


\begin{proposition}[= Proposition \ref{prop:Segre}, Corollary \ref{cor:Segre}]\label{prop:Segre:intro}
Let $\omega_1,\omega_2$ be K\"ahler forms on $V$.   Then
$$\omega_1^{d-2} + \omega_1^{d-3}\wedge \omega_2 + \cdots + \omega_2^{d-2} \in T^{d-2,d-2}$$
has the Hodge-Riemann property.
\end{proposition}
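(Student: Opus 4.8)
The plan is to recognise the given form as a Segre form of a (virtual) rank-two datum and to obtain its Hodge--Riemann property by restricting, to the base, the classical Hodge--Riemann form of a single ample class living one dimension up on a projectivization. Write $S=\sum_{j=0}^{d-2}\omega_1^{d-2-j}\wedge\omega_2^j$; this is the complete homogeneous symmetric form of degree $d-2$ in $\omega_1,\omega_2$, equivalently the $(d-2)$-th Segre form of the rank-two datum with Chern roots $\omega_1,\omega_2$. The first step is formal: on the $\P^1$-bundle $\pi\colon\P\to X$ over a complex torus $X$ realising $T^{p,q}\simeq H^{p,q}(X)$, with relative hyperplane class $\xi$ satisfying $\xi^2=(\omega_1+\omega_2)\xi-\omega_1\wedge\omega_2$, one has $\pi_*\xi^{d-1}=S$, while $\pi_*\xi^{d}$ and $\pi_*\xi^{d+1}$ are the corresponding symmetric forms of degrees $d-1$ and $d$. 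By the projection formula the form $\tilde Q(\beta,\beta')=\int_{\P}\beta\wedge\beta'\wedge\xi^{d-1}$ on $H^{1,1}(\P)=\pi^*T^{1,1}\oplus\R\xi$ is expressed entirely through integrals over $X$ of $S$, $\pi_*\xi^{d}$, $\pi_*\xi^{d+1}$, and its restriction to the hyperplane $\pi^*T^{1,1}$ is exactly $Q_S(\alpha,\alpha')=\int_X\alpha\wedge\alpha'\wedge S$. This last expression makes sense for all K\"ahler $\omega_1,\omega_2$, rational or not.

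The mechanism is then the elementary fact that the restriction of a Lorentzian form (signature $(1,\ast)$) to a subspace containing a vector of positive square is again Lorentzian, hence nondegenerate of signature $(1,\ast)$: if $w$ lies in the radical of the restriction $\tilde Q|_W$ then $w\perp W$, in particular $w\perp p$ for the positive vector $p\in W$, so $w$ lies in the negative-definite hyperplane $p^{\perp}$, while $w\perp w$ forces $\tilde Q(w,w)=0$ and hence $w=0$. Here $W=\pi^*T^{1,1}$ does contain a positive vector, namely $\pi^*h$ for any K\"ahler form $h$ on $V$, since $\int_X h^2\wedge S>0$ by K\"ahler positivity of $S$. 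Consequently, as soon as $\tilde Q$ is Lorentzian, $Q_S$ is Lorentzian, i.e.\ $S$ has the Hodge--Riemann property, the negative-definiteness on the $S$-primitive classes being precisely the negative-definiteness on $(\pi^*h)^{\perp}$. Thus everything reduces to the Lorentzian property of the ambient form $\tilde Q$.

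I would establish this ambient property first where $\P$ is a genuine compact K\"ahler manifold. When $\omega_1,\omega_2$ are rational the corresponding rank-two bundle $E$ is ample on a suitable torus quotient, which is exactly the construction behind Corollary \ref{cor:linearalgebra1:intro}; there $\xi=c_1(\cO_{\P(E)}(1))$ is a K\"ahler class and $\tilde Q$ is Lorentzian by the classical Hodge--Riemann relations on $\P$. On the diagonal $\omega_1=\omega_2=\omega$ one has instead the honest product $\P=\P^1\times X$ with the K\"ahler class $\xi=\eta+\pi^*\omega$, where $\eta$ is the fibre hyperplane class; then $\xi^2=2\omega\xi-\omega^2$ and the classical relations again give that $\tilde Q$ is Lorentzian, now for every K\"ahler $\omega$. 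Since $\tilde Q$ varies polynomially over the connected cone $\mathcal K\times\mathcal K$ of pairs of K\"ahler forms, always carries the positive vector $\pi^*h$, and is Lorentzian on a dense subset and along the whole diagonal, its signature, equivalently that of $Q_S$, is forced to be $(1,\ast)$ throughout the nondegenerate locus, because the index is locally constant there.

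The main obstacle is therefore exactly the Hard Lefschetz statement: the nondegeneracy of $Q_S$, equivalently of $\tilde Q$, for arbitrary and possibly irrational K\"ahler $\omega_1,\omega_2$. Density of the rational locus only shows that $\det Q_S$ is not identically zero, not that it is nonvanishing, so a separate argument is needed to exclude a class $\alpha\neq0$ with $\alpha\wedge S=0$. I would attack this through the mixed Hard Lefschetz theorem for products of arbitrary K\"ahler forms, which makes each monomial map $\alpha\mapsto\alpha\wedge\omega_1^{d-2-j}\wedge\omega_2^j$ injective with Lorentzian pairing, and then try to rule out cancellation in $\int_X\alpha^2\wedge S=\sum_j\int_X\alpha^2\wedge\omega_1^{d-2-j}\wedge\omega_2^j$ using the positive-vector restriction mechanism on the virtual projectivization. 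Controlling the interaction of the distinct primitive decompositions attached to the different monomials is, I expect, the genuinely hard point, and is precisely what confines this argument to the rank-two/Segre situation rather than to the general Schur class of the conjecture.
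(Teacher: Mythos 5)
Your treatment of the rational case is essentially the paper's: realize $S=\sum_{j}\omega_1^{d-2-j}\wedge\omega_2^{j}$ as the pushforward $\pi_*\xi^{d-1}$ of the $(d-1)$-st power of the ample tautological class on $\mathbb{P}_Y(H_1\oplus H_2)$ over an abelian variety $Y$, and transfer the classical Hodge-Riemann property of $\xi$ down to $Q_S$ via the projection formula together with the fact that the restriction of a signature-$(1,\ast)$ form to a subspace containing a vector of positive square is again of signature $(1,\ast)$ (condition (4) of Definition-Lemma \ref{deflemma:HRofquadraticform}). That part is correct.

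The gap is the one you flag yourself: for irrational $\omega_1,\omega_2$ your continuity argument over the connected cone of pairs of K\"ahler forms only controls the signature on the nondegenerate locus, and you have no proof that a given irrational pair lies in that locus. Density of the rational pairs shows $\det Q_S$ is not identically zero, not that it is nonvanishing, and the Hard Lefschetz input you propose to supply (via the mixed Hard Lefschetz theorem and a cancellation analysis across the different primitive decompositions of the monomials $\omega_1^{d-2-j}\wedge\omega_2^{j}$) is only sketched and acknowledged to be the hard point. As written the proof is therefore incomplete at precisely the step that matters.

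The paper circumvents this obstacle rather than confronting it, by a route worth importing. First, the projectivization argument applies not only to $E=H_1\oplus H_2$ with $H_i$ ample line bundles, but to any $\mathbb{R}$-twist $E\langle\hat\epsilon\rangle$ with $\hat\epsilon$ an \emph{arbitrary real} $(1,1)$-class such that $\hat\omega_1+\hat\epsilon$ and $\hat\omega_2+\hat\epsilon$ remain K\"ahler: then $\xi+\pi^*\hat\epsilon$ is a K\"ahler class on the genuine compact K\"ahler manifold $P=\mathbb{P}_Y(E)$, the classical Hodge-Riemann relations apply to $(\xi+\pi^*\hat\epsilon)^{d-1}$, and its pushforward is $(-1)^{d-2}s_{d-2}(\hat\omega_1+\hat\epsilon,\hat\omega_2+\hat\epsilon)$. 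This yields the Hodge-Riemann property for every pair of the form (rational $+\ \epsilon$, rational $+\ \epsilon$) with $\epsilon$ real. Second, the general case is reduced to this one by simultaneously diagonalizing $\omega_1=i\sum_j dz_j\wedge d\bar z_j$ and $\omega_2=i\sum_j\lambda_j\,dz_j\wedge d\bar z_j$, choosing rational $\tilde\lambda_j$ near $\lambda_j$, setting $\epsilon_j=(\lambda_j-\tilde\lambda_j)/(\lambda_j-1)$, and exhibiting an explicit diagonal coordinate change with entries $(1-\epsilon_j)^{-1/2}$ that carries $(\omega_1+\epsilon,\tilde\omega_2+\epsilon)$ to $(\omega_1,\omega_2)$; since the Hodge-Riemann property is invariant under linear coordinate changes, this finishes the proof with no continuity or nondegeneracy argument at all. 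This twist-plus-diagonalization device is the missing ingredient in your proposal.
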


Both Corollary \ref{cor:linearalgebra1:intro} and Proposition \ref{prop:Segre:intro} are elementary statements in linear algebra.  However the only proofs we are aware of are the ones given here that rely, ultimately, on Hodge-Theory.\\

\noindent {\bf Comparison with other work: }    In his work exposing a deep connection between K\"ahler geometry and convexity, Gromov \cite{Gromov90} initiated the investigation into whether there are other classes that have the Hard Lefschetz property, and proved that this is the case for certain products of (possibly different) K\"ahler classes.   This has since 
been taken up by Cattani \cite{Cattani}  and Dinh-Nguy\^en \cite{Dinh2}, \cite[Corollary 1.2]{Dinh}.   In particular  \cite{Dinh2,Dinh} explores the connection between the Hodge-Riemann property for cohomology classes and the kind of  linear algebra statements discussed above.   In \cite{Dinh2,Dinh} the authors moreover show that also lower degree products of K\"ahler classes enjoy the Hodge-Riemann property. However as we show in Example \ref{ex: higher codimension} this is no longer true in general for Schur classes of ample vector bundles. This is why we restrict in this paper to Schur classes of degree $d-2$.

For higher rank bundles the only existing statement along these lines that we are aware of is the Bloch-Gieseker Theorem \cite{BlochGieseker} which deals only with the Hard Lefschetz property (see Remark \ref{rmk:HRandLgeneralremarks}).  It is interesting to observe that both the aforementioned work of Gromov (at least in the rational case) and that of Bloch-Gieseker can be thought of as dealing with the class $c_e(E)$ for some vector bundle $E$.  
We appear to be the first to extend
 this to general Schur classes.

Ampleness of vector bundles goes back to Hartshorne \cite{Hartshorne_AmpleVectorBundles}, and analogous metric properties to Griffiths \cite{Griffiths}.  Both positivitity properties of these notions, as well as the relation between the two, has been much studied (e.g.\ \cite{Barenbaum,Berndtsson,Geertsen,Hartshorne_amplebundlesoncurves, Ionescu,Kleiman, KefengLiu,Mourougane,Snow}).   The paper that inspired the main result in this paper concerning Schur classes, as well as parts of its proof, is that of Fulton-Lazarsfeld \cite{FultonLazarsfeld}.

We refer the reader to \cite[Sec.\ 1.6]{Lazbook1} for an account of the various Hodge-Index type inequalities that can be deduced from Hodge-Theory, which takes from various sources including \cite{Demailly_numerical,Ping,Luo}. 
 Generalisations of these inequalities can be found in recent work of Xiao \cite{Xiao2,Xiao1} and Collins \cite{Collins} who approach this from the framework of concave elliptic equations. Of particular relevance to this paper are the inequalities of Khovanskii \cite{Khovanski} and Teisser \cite{TeissierBonnesen}.  \\


\noindent {\bf Main ideas in the proofs: }  We start by considering the Schur class $c_{d-2}(E)$ in the case that $\rank(E) = d-2$.  Then the Hard Lefschetz property follows from the Bloch-Gieseker Theorem.  In fact, this continues to hold if $E$ is replaced by the ample $\mathbb R$-twisted bundle $E\langle th\rangle$ where $h$ is a given ample class and $t\ge 0$.  Thus the signature of the intersection form defined by $c_{d-2}(E\langle th\rangle)$ is independent of $t$, and so a simple continuity argument implies the Hodge-Riemann bilinear relations in this case.

To deal with ample bundles of higher rank we use induction on $\rank(E)- d+2$ by applying the induction hypothesis to the product $(X\times \mathbb P^1,E\boxtimes \mathcal O_{\mathbb P^1}(1))$.  The result we want then follows from an elementary statement about quadratic forms that can be written in ``block form".  This completes the proof of Theorem \ref{thm:intromain} in the case that $s_{\lambda}(E) = c_{d-2}(E)$, and in fact gives the enhanced ``Hodge-Index" type inequality stated in Theorem \ref{thm:HI2intro}.

A similar trick gives the main step in the proof of the higher rank Khovanskii-Teissier inequalities (Theorem \ref{thm:intrologconvex}): we suppose $e=\rank(E) =d+k$, and apply the Hodge-Index inequality to the class $c_{e}(E\boxtimes \mathcal O_{\mathbb P^{k+2}}(1))$ on the product $X\times \mathbb P^{k+2}$.

To prove Theorem \ref{thm:intromain} for general Schur classes we follow the approach of Fulton-Lazarsfeld and consider intersection forms defined by suitable cone classes in ample bundles, and the effect of taking hyperplane sections on the base.  
But whereas in the original Fulton-Lazarsfeld argument the trivial observation that a positive linear combination of positive classes remains positive could be used, the analogous statement is not necessarily true of intersection forms that have the Hodge-Riemann property.  Instead we use an interplay between the Hodge-Riemann property and the enhanced Hodge-Index inequality discussed above (see \S\ref{sec:setupforproof} for a more detailed outline of this proof). \\

\noindent {\bf Organization: } Preliminaries in  \S\ref{sec:preliminaries} start with some basic statements about bilinear forms, including the aforementioned elementary, but crucial, statement about certain bilinear forms in block-form.  We also define precisely the Hodge-Riemann and Hard Lefschetz property for cohomology classes and summarize the theory of $\mathbb R$-twisted bundles.  

In  \S\ref{sec:HRcd-2} we prove Theorem \ref{thm:intromain} in the case $s_{\lambda}(E) = c_{d-2}(E)$ first when $E$ has rank $d-2$ and then for all rank.  The main result is in  \S\ref{sec:cone_classes} in which we state, and then prove, a general theorem about the Hodge-Riemann bilinear relations for intersection forms defined by cone classes.  This is applied  in \S\ref{sec:Schur} which gives details on the connection between Schur classes and cone classes (which uses standard intersection theory, as contained in \cite{FultonIT}).

In \S\ref{sec:conesofcycles} we apply this to explore  the cone of nef cycles on the self-product of a very general principally polarized abelian surface, and in \S\ref{sec:logconcavity} we apply it to prove Theorem \ref{thm:intrologschur} concerning the higher rank Khovanskii-Tessier inequalities.

In \S\ref{sec:Dihn} we turn to the K\"ahler setting and the Hodge-Riemann property for Schur classes of a collection of not necessarily rational K\"ahler forms.   Finally in \S\ref{sec:questions} we discuss a number of open questions and possible extensions.\\

\noindent {\bf Acknowledgements:} We particularly want to thank Brian Lehmann for conversations arising from an earlier version of this work, and acknowledge that the application in \S\ref{sec:conesofcycles} to the cone of cycles was suggested by him.   We also thank Izzet Coskun, Lionel Darondeau, Lawrence Ein, Christophe Mourougane, Eric Riedl and Kevin Tucker for discussions related to this work.   The first author is supported  by NSF grants DMS-1707661 and DMS-1749447.

\section{Preliminaries}\label{sec:preliminaries}

\subsection{Notation and conventions}
Our complex manifolds are assumed to be connected and vector bundles on them assumed to be holomorphic.  Given a vector bundle $E$ we denote by $\mathbb P(E)$ the space of one dimensional quotients of $E$, and by $\mathbb P_{\sub}(E)$ the space of one dimensional subspaces of $E$.  If $a,b$ are differential forms (or cohomology classes) we write $ab$ for the wedge product (resp.\ cap product) to ease notation when convenient.  A \emph{K\"ahler class} on a compact complex manifold is a strictly positive class in $H^{1,1}(X,\mathbb R)$ and an \emph{ample class} is a strictly positive class in $H^{1,1}(X,\mathbb Z)$, which we will identify with the corresponding ample divisor class when no confusion is likely.   We say a vector bundle $E$ on $X$ is \emph{ample} if the hyperplane class on $\mathbb P(E)$ is ample.

\subsection{Elementary properties of quadratic forms}
We collect here some elementary facts about bilinear and quadratic forms on finite dimensional vector spaces.  In particular in Proposition \ref{prop:quadraticformonedimensionhigher} we show certain quadratic forms that can be written in block-form satisfy an inequality similar to the classical Hodge-Index inequality.  This will be the cornerstone of the arguments in the rest of the paper.

Let $V$ be a real vector space of dimension $\rho$ and 
$$Q_V:V\times V\to \mathbb R$$ be a symmetric bilinear form on $V$.    We write
$$ Q_V(v) : = Q_V(v,v) \text{ for } v\in V$$
for the associated quadratic form.

\begin{definitionlemma}[The Hodge-Riemann property]\label{deflemma:HRofquadraticform}
Suppose there exists an $h\in V$ such that $Q_V(h)>0$.   Then the following statements are equivalent, in which case we say that $Q_V$ has the \emph{Hodge-Riemann property}.
\begin{enumerate}[(1)]
\item $Q_V$ has signature $(1,\rho-1)$.
\item There exists a subspace of dimension $\rho-1$ in $V$ on which $Q_V$ is negative definite.
\item For any $h'\in V$ such that $Q_V(h')>0$, the restriction of $Q_V$ to the primitive space 
$$ V_{h'}:= \{ v\in V : Q_V(v,h')=0\}$$
 is negative definite.
\item For any $h'\in V$ such that $Q_V(h')>0$ and all $v\in V$ the \emph{Hodge-Index inequality}
\begin{equation}  Q_V(v)  Q_V(h')\le  Q_V(v,h')^2 \label{eq:hodgeindex}\end{equation}
holds, with equality iff $v$ is proportional to $h'$.
\end{enumerate}
\end{definitionlemma}
\begin{proof}
(1) $\Rightarrow$ (2) and (3) $\Rightarrow$ (1) and (4)$\Rightarrow$ (3) are immediate, and (2)$\Rightarrow$ (3) comes from Sylvester's law of inertia.    For (3)$\Rightarrow$ (4): Given $v\in V$ choose $\lambda$ so  $ Q_V(v + \lambda h',h')=0$.  By  (3), this implies $ Q_V(v+\lambda h', v +\lambda h')\le 0$ with equality iff $v + \lambda h'=0$.  Rearranging gives (4). \end{proof}

Continuing with the above notation, suppose now $\phi\in V^*$ and consider the symmetric bilinear form on 
$$W:= V\oplus \mathbb R$$
given by
$$ Q_W( v\oplus \lambda, v'\oplus \lambda') = Q_V(v,v') + \lambda \phi(v') + \lambda' \phi(v).$$
So abusing notation a little, $Q_W$ is given in block form by
$$ Q_W= \left( \begin{array}{cc} Q_V & \phi^t \\ \phi & 0\end{array}\right).$$

\begin{proposition}\label{prop:quadraticformonedimensionhigher}
Suppose that $Q_W$ has the Hodge-Riemann property (i.e $Q_W$ has signature $(1,\rho))$ and suppose there is an $h\in V$ with \begin{enumerate}[(a)]
\item $Q_W(h)= Q_V(h)>0,$
\item $\phi(h)> 0$.
\end{enumerate}
Then 

\begin{enumerate}[(i)]
\item For all $v\in V$ it holds that
\begin{equation}\label{eq:quadraticformonedimensionhigher}Q_V(v) \phi(h) \le 2  Q_W(v,h) \phi(v)\end{equation}
with equality if and only if $v=0$.  
\item $Q_V$ has the Hodge-Riemann property.  In fact  $Q_V$ is negative definite on $\ker \phi$ which has codimension 1.
\end{enumerate}
\end{proposition}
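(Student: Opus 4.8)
The plan is to deduce both assertions from the Hodge-Index inequality for $Q_W$, which is available by part (4) of Definition-Lemma \ref{deflemma:HRofquadraticform} since $Q_W$ is assumed to have the Hodge-Riemann property. The starting observation is that under the inclusion $V\hookrightarrow W$, $v\mapsto v\oplus 0$, one has $Q_W(v,v')=Q_V(v,v')$ for all $v,v'\in V$; in particular, setting $h':=h\oplus 0\in W$, assumption (a) reads $Q_W(h')=Q_V(h)>0$, so $h'$ is an admissible positive vector for the Hodge-Index inequality.

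To prove (i) I would fix $v\in V$ and test the inequality against the vector $w:=v\oplus\mu\in W$, where $\mu\in\mathbb R$ is to be chosen. A direct computation gives
$$ Q_W(w,h')=Q_V(v,h)+\mu\,\phi(h)\quad\text{and}\quad Q_W(w)=Q_V(v)+2\mu\,\phi(v).$$
The point is that, since $\phi(h)>0$ by (b), I may choose $\mu=-Q_V(v,h)/\phi(h)$ so that the cross term $Q_W(w,h')$ vanishes. The Hodge-Index inequality $Q_W(w)\,Q_W(h')\le Q_W(w,h')^2=0$ together with $Q_W(h')>0$ then forces $Q_W(w)\le 0$, which after multiplying by $\phi(h)>0$ is exactly \eqref{eq:quadraticformonedimensionhigher}. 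For the equality statement, observe that equality in \eqref{eq:quadraticformonedimensionhigher} is equivalent to $Q_W(w)=0$; combined with $Q_W(w,h')=0$ this gives equality in the Hodge-Index inequality, so by the equality clause of Definition-Lemma \ref{deflemma:HRofquadraticform}(4) the vector $w$ is proportional to $h'$. Comparing last coordinates forces $\mu=0$ and $v=th$ for some scalar $t$, and then $0=-\mu\,\phi(h)=Q_V(v,h)=t\,Q_V(h)$ with $Q_V(h)>0$ yields $t=0$, i.e.\ $v=0$.

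Part (ii) would then follow immediately. Since $\phi(h)>0$, $\phi$ is nonzero and $\ker\phi\subset V$ has codimension one. For any nonzero $v\in\ker\phi$ the right-hand side of \eqref{eq:quadraticformonedimensionhigher} vanishes, so $Q_V(v)\,\phi(h)\le 0$, hence $Q_V(v)\le 0$; and the equality case just established rules out $Q_V(v)=0$, so in fact $Q_V(v)<0$. Thus $Q_V$ is negative definite on the $(\rho-1)$-dimensional subspace $\ker\phi$, and since $Q_V(h)>0$ by (a), part (2) of Definition-Lemma \ref{deflemma:HRofquadraticform} shows $Q_V$ has the Hodge-Riemann property.

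The argument is essentially forced once one decides to work inside $W$, so I do not anticipate a genuine obstacle; the only delicate points are the engineered choice $\mu=-Q_V(v,h)/\phi(h)$, made precisely to collapse the Hodge-Index inequality to the sign statement $Q_W(w)\le 0$, and the careful tracking of the equality case through the proportionality $w\propto h'$.
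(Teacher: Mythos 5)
Your proof is correct and follows essentially the same route as the paper: both apply the Hodge-Index inequality for $Q_W$ (Definition-Lemma \ref{deflemma:HRofquadraticform}(4)) to a test vector $v\oplus\lambda$ against $h\oplus 0$ and extract \eqref{eq:quadraticformonedimensionhigher} by a judicious choice of $\lambda$, then read off (ii) from the equality case. The only difference is cosmetic: the paper views the result as a nonnegative quadratic in $\lambda$ and evaluates at its minimizer, whereas you take $\lambda=-Q_V(v,h)/\phi(h)$ to annihilate the cross term, which if anything streamlines both the inequality and the tracking of the equality case.
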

\begin{proof}
Let $v\in V$ and $v\oplus \lambda \in W$.  By the Hodge-Index inequality \eqref{eq:hodgeindex} for $Q_W$ we have
\begin{equation}\label{eq:quadraticinlambda} Q_W(v\oplus\lambda,h)^2- Q_W(v \oplus \lambda)  Q_W(h) \ge 0\end{equation}
with equality if and only if $v\oplus\lambda$ is proportional to $h$.     The idea of the proof is to think of \eqref{eq:quadraticinlambda} as a quadratic  polynomial  in $\lambda\in \mathbb R$ that is always non-negative, which by elementary algebra gives an inequality among its coefficients.

To ease notation let
 $$\begin{array}{ll}
 a := Q_W(v)= Q_V(v)  &  d: = \phi(h)\\  b:=Q_W(v,h)= Q_V(v,h) &   e:=\phi(v)\\ c: = Q_W(h) = Q_V(h) 
 \end{array}$$
 and observe that by hypothesis $c,d>0$.
Then  \eqref{eq:quadraticinlambda} becomes
 \begin{equation}(b+\lambda d)^2-c(a+ 2\lambda e)\ge 0 \text{ for all } \lambda\in \mathbb R\label{eq:quadraticinlambda:repeat}\end{equation}
 with equality if and only if $v\oplus \lambda$ is proportional to $h$.
 
Now substituting
$$\lambda_0: = \frac{ce-db}{d^2}$$
into \eqref{eq:quadraticinlambda:repeat} and simplifying yields
$$2dbe-ad^2-ce^2\ge 0.$$
So, using $c>0$, we have
\begin{equation}ad^2\le 2bde - ce^2 \le 2bde\label{eq:quadraticformonedimensionhigher2} \end{equation}
and hence
$$ad\le 2be$$
which is precisely the inequality \eqref{eq:quadraticformonedimensionhigher} we wanted to show.

Suppose now equality holds for $v$ in \eqref{eq:quadraticformonedimensionhigher}.  
In the notation above this says precisely $ad=2be$ and so \eqref{eq:quadraticformonedimensionhigher2} implies $ce^2=0$ and so $e=0$.  
 Moreover equality holds in \eqref{eq:quadraticinlambda:repeat} when $\lambda= \lambda_0$, and so $v\oplus \lambda_0$ is proportional to $h$.  In turn this implies that
 $v$ is proportional to $h$, say $v=\kappa h$ for some $\kappa\in \mathbb R$ and so $0=e=\kappa d$, 
$\kappa=0$ and hence $v=0$ as desired proving (i).

The final statements are clear, for our assumption that $\phi(h)>0$ implies that $\ker \phi$ has codimension 1, and \eqref{eq:quadraticformonedimensionhigher} implies $Q_V$ is negative definite on $\ker \phi$.  Thus (ii) holds.
\end{proof}

\subsection{The Hodge-Riemann property for cohomology classes}

Let $X$ be a compact K\"ahler manifold of dimension $d\ge 2$, $\omega_0$ be a K\"ahler class on $X$ and fix an integer $0\le k\le d$ so that $d-k$ is even.    Let $$\Omega\in H^{k,k}(X;\mathbb R)$$ and consider the intersection pairing
\begin{align*} 
Q_\Omega(\alpha,\beta):=Q(\alpha,\beta) &:= \int_X \alpha \wedge \Omega\wedge {\beta} \text{ for } \alpha,\beta\in H^{\frac{d-k}{2},\frac{d-k}{2}}(X;\mathbb R).\end{align*}
We denote by
$$H^{\frac{d-k}{2},\frac{d-k}{2}}_{p,\Omega}(X;\mathbb R)$$
the \emph{primitive cohomology} of $\Omega$, by which we mean the kernel of the map $$H^{\frac{d-k}{2},\frac{d-k}{2}}(X;\mathbb R) \to H^{\frac{d+k+2}{2} ,\frac{d+k+2}{2}}(X;\mathbb R) \text{ given by } \alpha\mapsto \Omega\wedge \omega_0\wedge \alpha.$$

\begin{definition}[Hard Lefschetz Property]
We say that $\Omega$ has the \emph{Hard Lefschetz property}  
     if the map
\begin{align}
H^{\frac{d-k}{2},\frac{d-k}{2}}(X;\mathbb R)& \to H^{\frac{d+k}{2},\frac{d+k}{2}}(X;\mathbb R)\label{eq:hardlefschetz}\\
\alpha&\mapsto \Omega\wedge \alpha\nonumber
\end{align}
is an isomorphism.  
\end{definition}

\begin{definition}[Hodge-Riemann Property]\label{def:HRmanifolds}
 We say that $\Omega$ has the \emph{Hodge-Riemann property }  (with respect to $\omega_0$) if
 \begin{enumerate}
     \item $\int_X \Omega.\omega_0^{d-k}> 0$ and
     \item $(-1)^{\frac{d-k}{2}}Q_\Omega$  is positive definite on the primitive cohomology $H^{\frac{d-k}{2},\frac{d-k}{2}}_{p,\Omega}(X;\mathbb R)$.
     \end{enumerate}
\end{definition}

\begin{remarks}\label{rmk:HRandLgeneralremarks}
\begin{enumerate}[(1)]
\item The map \eqref{eq:hardlefschetz} being an isomorphism is equivalent to $Q_{\Omega}$ being non-degenerate.  Thus the Hodge-Riemann property implies the Hard Lefschetz property.
\item When $k=d$ the Hard Lefschetz property is equivalent to $\int_X \Omega\neq 0$, and the Hodge-Riemann property is equivalent to $\int_X \Omega>0$. 
    \item If $\omega \in H^{1,1}(X;\mathbb R)$ is a K\"ahler class then the classical Hard Lefschetz Theorem (see for instance \cite[Theorem 6.4]{Voisin}) says that $\omega^k$ has both the Hard Lefschetz and Hodge-Riemann property for $k\le d$.
    \item More generally, suppose $\omega_1,\ldots,\omega_k\in H^{1,1}(X;\mathbb R)$ are K\"ahler classes and $k\le d$.  Then it is known that
    $$\Omega:= \omega_1\wedge \cdots \wedge \omega_{k}$$
has both the Hard Lefschetz and Hodge-Riemann property. This is due to Gromov \cite{Gromov90} when $k=d-2$, and in general due to Cattani \cite{Cattani} as well as Dihn-Nguyen \cite{Dinh2}, \cite[Corollary 1.2]{Dinh}  (in fact the last two citations consider more generally the corresponding statement on $(p,q)$-forms).
    \item Let $E$ be an ample vector bundle of rank $k\le d$ on $X$.  Then a Theorem of Bloch-Gieseker (to be discussed further in \ref{sec:blochgieseker}) implies that the Chern class $c_k(E)$ has the Hard Lefschetz property.  
    \item Since $\Omega\in H^{k,k}(X,\mathbb R)$ is assumed to be real,  the Hard Lefschetz property is equivalent to the map on the complex vector spaces
    \begin{align*}
H^{\frac{d-k}{2},\frac{d-k}{2}}(X)& \to H^{\frac{d+k}{2},\frac{d+k}{2}}(X)\label{eq:hardlefschetz:complex}\\
\alpha&\mapsto \Omega.\alpha\nonumber
\end{align*}
being an isomorphism.  And there is an analogous statement for the Hodge-Riemann property.    Thus there is no loss in considering real cohomology throughout, which we do for simplicity.
    \item The Hard Lefschetz and Hodge-Riemann properties are each clearly invariant under scaling $\Omega$ by a positive real number.  However neither property are closed under taking convex combinations (see Remark \ref{rem:convexcombinations}).
   
\end{enumerate}
\end{remarks}

\subsection{$\mathbb R$-twisted vector bundles}

We recall briefly the notion of $\mathbb R$-twisted bundles (essentially following \cite[Section 6.2, 8.1.A]{Lazbook2}, \cite[p457]{Miyoka}).  Let $E$ be a vector bundle of rank $e$ on a base $X$ and $\delta\in H^{1,1}(X;\mathbb R)$.   Then we can consider the so-called $\mathbb R$-twised bundle of rank $e$ denoted by
$$E\langle \delta \rangle.$$
which is to be understood as a formal object, having Chern classes defined by the rule
\begin{equation}c_p(E\langle \delta \rangle) := \sum_{k=0}^p\binom{e-k}{p-k} c_k(E) \delta^{p-k} \text{ for } 0\le p\le e.\label{eq:defcherntwisted}\end{equation}
Said another way, if $x_1,\ldots,x_e$ are the Chern roots of $E$ then $x_1+\delta,\ldots,x_e+\delta$ are the Chern roots of $E\langle \delta\rangle$.

This definition is made so that if $\delta$ is integral, so $\delta = c_1(L)$ for some line bundle $L$, then
$$c_p(E\langle \delta \rangle) = c_p(E\otimes L).$$

The twist of an $\mathbb R$-twisted vector bundle by a $\delta'\in H^{1,1}(X,\mathbb R)$ is defined by the obvious rule
$$E\langle \delta \rangle \langle \delta'\rangle : = E\langle \delta + \delta'\rangle,$$
and the tensor product of an $\mathbb R$-twisted vector bundle and a line bundle $L$ is given by the rule
$$ E\langle \delta \rangle \otimes L: = E \langle \delta  + c_1(L) \rangle.$$
Consider now the projective bundle $\pi:\mathbb P(E)\to X$ of one-dimensional quotients in $E$ with hyperplane class $h_{\mathbb P(E)}: = c_1(\mathcal O_{\mathbb P(E)}(1))$.

\begin{definition}
We say that the $\mathbb R$-twisted vector bundle $E\langle \delta\rangle$ is ample (resp.\ nef) if the class
$$ h_{\mathbb P(E)} +  \pi^* \delta \in H^{1,1}(\mathbb P(E))$$
is ample (resp.\ nef).
\end{definition}

We observe that this agrees with the usual definition when $\delta = c_1(L)$ for some line bundle $L$.  For then $\mathbb P(E) \simeq \mathbb P(E\otimes L)$ and under this identification
$$h_{\mathbb P(E\otimes L)} = h_{\mathbb P(E)} +  \pi^* \delta,$$
so $E\langle c_1(L)\rangle$ is ample if and only if $E\otimes L$ is ample.

Now on $\mathbb P_{\sub}(E)$ we have a tautological quotient bundle $U$ of rank one less than $E$,  which fits into the tautological sequence
$$ 0 \to K \to \pi^* E \to U\to0.$$
For the twisted case we identify $\mathbb P_{\sub}(E\langle \delta \rangle)$ with $\mathbb P_{\sub}(E)$ and the tautological bundle on the former is defined to be 
$$ U \langle \pi^* \delta \rangle$$
which fits into the twisted exact sequence
\begin{equation} 0 \to K\langle \pi^*\delta \rangle \to \pi^* E\langle \delta\rangle \to U\langle \pi^* \delta \rangle \to 0.\label{eq:universalquotienttwisted}\end{equation}

\subsection{Schur polynomials}

By a partition $\mu$ of an integer $e$ we mean a sequence $0\le \mu_N\le \cdots \le \mu_1$ such that $|\mu|: = \sum_i \mu = e$.  Given such a partition one has the Schur polynomial $s_{\mu}(x_1,\ldots,x_e)$, which is symmetric (we will need almost nothing about the theory of such polynomials, but the interested reader will find many accounts e.g. \cite{FultonYoung}).  

 When $x_1,\ldots,x_e$ are the chern roots of an $\mathbb R$-twisted bundle $E$ on $X$ we thus have a well-defined class $$s_{\mu}(E) \in H^{|\mu|,|\mu|}(X;\mathbb R).$$
We will have use for the following ``derived" Schur polynomials (compare \cite[Theorem 1.5]{JangSooKim}).

\begin{definition}\label{def:lowerschur}
Let $\mu$ be a partition.   For each $0\le i\le |\mu|$ let $s_{\mu}^{(i)}(x_1,\ldots,x_e)$ be defined by requiring that
$$ s_{\mu}(x_1+t,\ldots,x_e+t) = \sum_{i=0}^{|\mu|} s_{\mu}^{(i)}(x_1,\ldots,x_e) t^i \text{ for all } t\in \mathbb R.$$
\end{definition}
Clearly then $s_{\mu}^{(i)}$ is a symmetric polynomial of degree $|\mu|-i$ and  $s_{\mu}^{(0)} = s_{\mu}$.      A formal calculation, that is left to the reader, implies
\begin{equation}s_{\mu}^{(i)}(x_1+t,\ldots,x_e+t) = \sum_{k=i}^{|\mu|} \binom{k}{i} s_{\mu}^{(k)}(x_1,\ldots,x_e) t^{k-i}\label{eq:lowershurexpansion}.\end{equation}
Once again, thinking of $x_1,\ldots,x_e$ are the Chern roots of an $\mathbb R$-twisted bundle $E$ on $X$ gives a well-defined characteristic class
$$s_{\mu}^{(i)}(E)\in H^{|\mu|-i,|\mu|-i}(X;\mathbb R).$$ 
Moreover if $\delta\in H^{1,1}(X;\mathbb R)$ then, by definition,
$$ s_{\mu}(E\langle \delta\rangle) = \sum_{i=0}^{|\mu|} s_{\mu}^{(i)}(E) \delta^i,$$
and \eqref{eq:lowershurexpansion} implies
\begin{equation}s_{\mu}^{(i)}(E\langle \delta\rangle) = \sum_{k=i}^{|\mu|}\binom{k}{i} s_{\mu}^{(k)}(E) \delta^{k-i}.\label{eq:lowercherntwistexpansion}\end{equation}

\begin{example}[Chern classes]\label{ex: Chern classes}
Consider the simplest partition of $e$ consisting of just one integer $\mu_1=e$, at which point $s_{\mu}(x_1,\ldots,x_e) = x_1\cdots x_e$.  So if $E$ is an $\mathbb R$-twisted vector bundle of rank $e$ then $s_{\mu}(E) = c_{e}(E)$, and moreover 
$$s_{\mu}^{(i)}(E) = c_{e-i}(E)\text{ for all } 0\le i\le e.$$
Then \eqref{eq:lowercherntwistexpansion} rearranges to become
\begin{equation} c_p (E\langle \delta\rangle) = \sum_{k=0}^p \binom{e-k}{p-k} c_k(E) \delta^{p-k} \text{ for } 0\le p\le e,\label{eq:chernclasstensorproduct:full}\end{equation}
which agrees with \eqref{eq:defcherntwisted} (as it must).  We record for later use that in particular if $1\le p\le e$ and $t\in \mathbb R$ then
\begin{equation}c_p(E\langle t\delta \rangle) = c_p(E) +t(e-p+1)c_{p-1}(E)\delta+ O(t^2).\label{eq:chernclasstensorproduct}\end{equation}
\end{example}

\begin{example}[Segre classes]
At the other extreme we may consider the partition $(1)^e=(1,\ldots,1)$ of length $e$.  Then $s_{\mu}(E) = (-1)^e s_e(E)$ where $s_e(E)$ is the Segre class.    Letting $e= \rank (E)$ we have  \cite[3.1.1]{FultonIT}
$$s_{(1)^e}(E\otimes L) = \sum_{j=0}^e \binom{2e-1}{2e-1-j} s_{(1)^{e-j}}(E) c_1(L)^{j}.$$
and thus
$$ s_{(1)^e}^{(i)}(E) = \binom{2e-1}{2e-1-i}s_{(1)^{e-i}}(E).$$
\end{example}

\begin{example}[Derived Schur polynomials of Low degree]
For convenience of the reader we list some of the derived Schur classes of low degree for a bundle $E$ of rank $e$

$$s_{(1)} = c_1, \quad s_{(1)}^{(1)} = e \text{ for  } e\ge 1.$$
$$s_{(2,0)} = c_2, \quad s_{(2,0)}^{(1)} = (e-1) c_1 \quad s_{(2,0)}^{(2)}=\binom{e}{2} \text{ for } e \ge 2.$$
$$s_{(1,1)} = c_1^2 -c_2, \quad s_{(1,1)}^{(1)}=(e+1)c_1, \quad s_{(1,1)}^{(2)}= \binom{e+1}{2} \text{ for } e \ge 2.$$
$$s_{(3,0,0)} = c_3, \quad s_{(3,0,0)}^{(1)} = (e-2)c_2, \quad s_{(3,0,0)}^{(2)}= \binom{e-1}{2} c_1, \quad s_{(3,0,0)}^{(3)}=\binom{e}{3} \text{ for } e \ge 3.$$
$$s_{(2,1,0)}= c_1c_2 -c_3,\quad  s_{(2,1,0)}^{(1)}= 2c_2 + (e-1)c_1^2, \quad s_{(2,1,0)}^{(2)}=(e^2-1) c_1, $$
$$\quad  s_{(2,1,0)}^{(3)}=2\binom{e+1}{3} \text{ for } e \ge 3.$$
$$s_{(1,1,1)}= c_1^3 -2c_1c_2 + c_3, \quad s_{(1,1,1)}^{(1)}=(e+2) (c_1^2-c_2) ,\quad s_{(1,1,1)}^{(2)}= \binom{e+2}{2} c_1,$$
$$s_{(1,1,1)}^{(3)}= \binom{e+2}{3},\text{ for } e \ge 3.$$
\end{example}

\subsection{The Bloch-Gieseker theorems}\label{sec:blochgieseker}

\begin{theorem}[Bloch-Gieseker I]\label{thm:blochgiesekerI}
Let $X$ be projective smooth of dimension $d$ and $E$ be an $\mathbb R$-twisted ample vector bundle of rank $e$ on $X$.  Let $s = \min\{e,d\}$ and assume $i\le (d-s)/2$.  Then the map
$$H^{i,i}(X;\mathbb R) \to H^{i+s,i+s}(X;\mathbb R) \quad \alpha\mapsto \alpha\wedge c_s(E)$$
is injective.
\end{theorem}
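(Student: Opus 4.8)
The plan is to deduce the $\mathbb R$-twisted statement from the classical Bloch--Gieseker theorem for honest ample bundles \cite{BlochGieseker}, using the covering trick to absorb the twist, and then to treat the passage to irrational twists separately as the genuine difficulty.

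First I would dispose of the twist in the rational case. Write $E=E_0\langle\delta\rangle$ with $E_0$ an honest bundle and $\delta\in H^{1,1}(X;\mathbb R)$; ampleness of $E$ means $\xi+\pi^*\delta$ is ample on $\mathbb P(E_0)$, where $\xi=c_1(\mathcal O_{\mathbb P(E_0)}(1))$. Suppose $\delta$ is rational and pick $m\ge 1$ with $m\delta=c_1(L)$ for an honest line bundle $L$. The Bloch--Gieseker covering construction furnishes a smooth projective $Y$ with a finite surjective $f\colon Y\to X$ and a line bundle $M$ with $f^*L\cong M^{\otimes m}$, so that $f^*\delta=c_1(M)$ and
$$ f^*\bigl(E_0\langle\delta\rangle\bigr)=(f^*E_0)\langle c_1(M)\rangle=(f^*E_0)\otimes M $$
is an honest ample bundle on $Y$ of the same rank. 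Since $f$ is finite surjective, $f_*f^*$ is multiplication by $\deg f$, whence $f^*$ is injective on cohomology; as $f^*c_s(E)=c_s(f^*E)$ and $\dim Y=\dim X=d$ (so that $s=\min\{e,d\}$ and the range $i\le (d-s)/2$ are unchanged), injectivity of $\alpha\mapsto\alpha\wedge c_s(f^*E)$ on $Y$ descends to injectivity of $\alpha\mapsto\alpha\wedge c_s(E)$ on $X$. This settles the theorem for all rational twists; the case $e>d$ amounts to the nonvanishing $\int_X c_d(E)\neq 0$, which is the Fulton--Lazarsfeld positivity \cite{FultonLazarsfeld}.

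For completeness, and to see why a new idea is needed for irrational $\delta$, I would recall where ampleness enters the honest case. A further covering reduces to $E$ globally generated, so that a general section vanishes along a smooth $\iota\colon Z\hookrightarrow X$ of codimension $e$ with $[Z]=c_e(E)$ and $\cup c_e(E)=\iota_*\iota^*$ by the projection formula. The Lefschetz hyperplane theorem for zero loci of ample bundles makes $\iota^*$ an isomorphism onto $H^{i,i}(Z)$ for $i<\dim Z=d-e$ and injective for $i=d-e$; combined with injectivity of the Gysin map $\iota_*$ on the relevant primitive part this yields injectivity of $\iota_*\iota^*$ throughout the range. The essential feature is that this argument consumes an \emph{honest} bundle with \emph{honest} sections.

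The main obstacle is the extension to arbitrary real $\delta$, where no sections exist and the covering trick is unavailable. Injectivity of $\alpha\mapsto\alpha\wedge c_s(E_0\langle\delta\rangle)$ is an open but not a closed condition on $\delta$, and in the square (top degree) case the governing quantity is a determinant that is polynomial in $\delta$ with rational coefficients; being nonzero at every rational twist does not preclude a zero at an irrational ample twist, so density is insufficient. I would instead attempt a direct argument on $\mathbb P(E_0)$ that runs the geometry through the genuine real K\"ahler class $\xi+\pi^*\delta$ rather than through sections, replacing the zero-locus construction by a purely cohomological input (for instance the Hard Lefschetz property of products of K\"ahler classes, applied on an iterated projective bundle where $\pi^*c_e(E)$ factors through $\xi+\pi^*\delta$ and a lower-rank class). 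Securing non-degeneration at \emph{every} real twist, despite the failure of the kernel bundle on $\mathbb P(E_0)$ to be ample, is the delicate point on which the $\mathbb R$-twisted statement ultimately rests.
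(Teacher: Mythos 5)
Your reduction of the rational-twist case is correct, and it is essentially the route the paper has in mind: the paper's own ``proof'' of Theorem \ref{thm:blochgiesekerI} is just a citation of \cite{BlochGieseker} together with the assertion that the argument goes through essentially unchanged for $\mathbb R$-twists, with pointers to \cite[p.~113]{Lazbook2} and \cite[Proposition 2.1]{DemaillyPeternellSchneider}. Your covering-trick step (pass to a finite surjective $f\colon Y\to X$ extracting an $m$-th root of $L$ so that $f^*(E_0\langle\delta\rangle)$ becomes an honest ample bundle, then use that $f_*f^*$ is multiplication by $\deg f$ to descend injectivity) is the standard way to absorb a $\mathbb Q$-twist, and your sketch of the untwisted case via zero loci of sections and the Sommese--Lefschetz theorem is the right outline of the underlying geometry.

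The gap is that the theorem is stated, and is actually used in the paper, for arbitrary real twists, and your proposal explicitly stops short of that case: your last paragraph diagnoses the difficulty correctly (injectivity is an open but not a closed condition in $\delta$, so density of the rational twists in the ample region proves nothing by itself), but ``I would instead attempt a direct argument'' is not a proof, and no substitute argument is supplied. This is not a hypothesis one can quietly weaken to $\mathbb Q$-twists. In the proof of Proposition \ref{prop:cn2I} the paper needs the form $Q_t(\alpha)=\int_X\alpha\, c_{d-2}(E\langle th\rangle)\,\alpha$ to be non-degenerate for \emph{every} real $t\ge 0$ (via Corollary \ref{cor:cn2nondegen}, which is exactly Theorem \ref{thm:blochgiesekerI} with $i=1$), because the argument transports the signature of $Q_t$ continuously from $t\gg 0$ down to $t=0$; a single irrational value of $t$ at which non-degeneracy failed would allow the signature to jump and would destroy that proof. (The same issue already afflicts your treatment of the case $e>d$: for an irrational twist, continuity from rational approximations only yields $\int_X c_d(E)\ge 0$, not the required nonvanishing.) So as written your proposal establishes a strictly weaker statement than the one claimed; to close it you must either carry out the direct argument you allude to on $\mathbb P(E_0)$, or reproduce the adaptation of the Bloch--Gieseker proof to real twists that the paper is implicitly outsourcing to the cited sources.
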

\begin{proof}
This originates in \cite{BlochGieseker} (see also \cite[7.1.10]{Lazbook2}).  We observe that \cite{BlochGieseker} is not stated for $\mathbb R$-twists, but the proof goes through essentially unchanged (see  \cite[p113]{Lazbook2}, \cite[Proposition 2.1]{DemaillyPeternellSchneider}).
\end{proof}

\begin{theorem}[Bloch-Gieseker II]\label{thm:BlochGiesekerII}
Let $X$ be projective smooth of dimension $d$ and $E$ be an $\mathbb R$-twisted ample vector bundle  of rank $e$ on $X$ with $e\ge d$.  Then $\int_X c_d(E)>0$.
\end{theorem}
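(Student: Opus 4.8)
The plan is to upgrade the non-vanishing supplied by Bloch-Gieseker~I (Theorem~\ref{thm:blochgiesekerI}) to strict positivity by a continuity argument that deforms the twist into a region where the sign is manifest.

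First I would read off non-vanishing directly from Theorem~\ref{thm:blochgiesekerI}. Since $e\ge d$ we have $s:=\min\{e,d\}=d$, and the only index satisfying $i\le (d-s)/2=0$ is $i=0$. Thus the map $H^{0,0}(X;\mathbb R)\to H^{d,d}(X;\mathbb R)$, $\alpha\mapsto \alpha\wedge c_d(E)$, is injective. As $X$ is connected we have $H^{0,0}(X;\mathbb R)=\mathbb R$ and $H^{d,d}(X;\mathbb R)\cong\mathbb R$ via integration, and under these identifications the map is multiplication by $\int_X c_d(E)$. Hence injectivity is exactly the assertion $\int_X c_d(E)\neq 0$, and it remains only to determine the sign.

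To pin down the sign I would fix an ample class $h$ on $X$ and consider, for $t\ge 0$, the further-twisted bundle $E\langle th\rangle$ together with the function $f(t):=\int_X c_d(E\langle th\rangle)$. For every $t\ge 0$ the bundle $E\langle th\rangle$ is again ample, being the twist of the ample $\mathbb R$-twisted bundle $E$ by the nef class $th$ (immediate from the definition, since the defining ample class $h_{\mathbb P(E)}+\pi^*\delta$ merely acquires the nef summand $t\pi^*h$). So the previous paragraph applies to each $E\langle th\rangle$ and gives $f(t)\neq 0$ for all $t\ge 0$. On the other hand $f$ is a polynomial in $t$: if $x_1,\dots,x_e$ are the Chern roots of $E$ then those of $E\langle th\rangle$ are $x_i+th$, and extracting the top power of $t$ in $c_d=e_d(x_1+th,\dots,x_e+th)$ gives $f(t)=\binom{e}{d}\bigl(\int_X h^d\bigr)\,t^d+O(t^{d-1})$. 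Since $h$ is ample we have $\int_X h^d>0$, and $\binom{e}{d}\ge 1$, so $f(t)>0$ for $t\gg 0$.

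Finally I would conclude by the intermediate value theorem: $f$ is continuous (indeed polynomial) on the connected interval $[0,\infty)$, never vanishes there, and is positive for large $t$, hence it is positive throughout. In particular $f(0)=\int_X c_d(E)>0$, as desired. The one genuinely substantive step is this passage from non-vanishing to positivity; the continuity/connectedness argument lets me avoid tracking the sign through the proof of Theorem~\ref{thm:blochgiesekerI} and instead read it off from the transparent asymptotics as $t\to\infty$. Everything else is routine, the two points worth checking being that twisting an ample $\mathbb R$-twisted bundle by a nef class preserves ampleness, and that $f$ has the claimed polynomial leading term, both of which follow immediately from the definitions and the Chern-root description of $E\langle th\rangle$.
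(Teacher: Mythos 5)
Your argument is correct. The paper itself gives no proof of this statement, deferring entirely to \cite[Proposition 2.2]{BlochGieseker} and \cite[Corollary 8.2.2]{Lazbook2}; your deduction of positivity from non-vanishing is essentially the argument of those references, and it is also the same deformation trick the paper uses in its proof of Proposition \ref{prop:cn2I} (twist by $th$, observe the relevant quantity never vanishes by Bloch--Gieseker~I, read off the sign from the $t\to\infty$ asymptotics where $c_d(E\langle th\rangle)\sim\binom{e}{d}t^dh^d$, and conclude by connectedness of $[0,\infty)$). The two points you flag as needing verification are indeed the only ones, and both check out: ampleness of $E\langle th\rangle$ for $t\ge 0$ is immediate from the definition since $h_{\mathbb P(E)}+\pi^*\delta+t\pi^*h$ is the sum of an ample and a nef class, and $f(t)=\sum_{k=0}^d\binom{e-k}{d-k}t^{d-k}\int_X c_k(E)h^{d-k}$ is visibly a degree-$d$ polynomial with positive leading coefficient. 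The only external input is that Theorem \ref{thm:blochgiesekerI} holds for $\mathbb R$-twisted bundles, which the paper asserts with its own references, so your use of it is legitimate.
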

\begin{proof}
See \cite[Proposition 2.2]{BlochGieseker} or \cite[Corollary 8.2.2]{Lazbook2}.
\end{proof}

We collect some simple consequences of this result.

\begin{corollary}\label{cor:nonzerochernclass}
Let $X$ be projective smooth of dimension $d$ and $E$ be a rank $e$ $\mathbb R$-twisted ample vector bundle and $h\in H^{1,1}(X,\mathbb Z)$ be an integral ample class. 
Then 
$$\int_X c_q(E) h^{d-q} >0 \quad \text{ for all } q\le\min\{d,e\}.$$
\end{corollary}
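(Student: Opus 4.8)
The plan is to reduce the statement to the Bloch-Gieseker positivity Theorem \ref{thm:BlochGiesekerII} by cutting $X$ down with hyperplane sections until the dimension of the base matches $q$. Fix $q$ with $0\le q\le \min\{d,e\}$. The case $q=0$ is immediate, since $c_0(E)=1$ and $\int_X h^d>0$ because $h$ is ample, so we may assume $q\ge 1$.

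Since $h\in H^{1,1}(X;\mathbb Z)$ is ample, I would choose an integer $m\gg 0$ so that $mh$ is very ample. By Bertini's theorem a general intersection $Y$ of $d-q$ members of the linear system $|mh|$ is a smooth connected projective subvariety of $X$ of dimension $q$ (connectedness holds because $q\ge 1$). The projection formula then gives
\[
\int_X c_q(E)\, h^{d-q} \;=\; \frac{1}{m^{d-q}}\int_X c_q(E)\,(mh)^{d-q} \;=\; \frac{1}{m^{d-q}}\int_Y c_q(E)\big|_Y .
\]

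It remains to see that the right-hand integral is positive. Writing $E=E'\langle\delta\rangle$ with $E'$ an honest bundle and $\delta\in H^{1,1}(X;\mathbb R)$, one has $E|_Y = E'|_Y\langle\delta|_Y\rangle$, and the class $h_{\mathbb P(E')}+\pi^*\delta$ defining ampleness restricts to the corresponding class on $\mathbb P(E'|_Y)\subset\mathbb P(E')$, which remains ample as the restriction of an ample class; moreover $c_q(E)|_Y = c_q(E|_Y)$. Thus $E|_Y$ is an ample $\mathbb R$-twisted bundle of rank $e$ on the $q$-dimensional smooth projective manifold $Y$, and since $q\le e$ we have $\rank(E|_Y)=e\ge q=\dim Y$. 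Applying Bloch-Gieseker II (Theorem \ref{thm:BlochGiesekerII}) to $Y$ then yields $\int_Y c_q(E|_Y)>0$, and hence $\int_X c_q(E)\,h^{d-q}>0$.

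The only points requiring care are the routine verifications that $Y$ can be taken smooth and connected (Bertini) and that ampleness of the $\mathbb R$-twist is inherited under restriction to $Y$; neither presents a genuine difficulty, so the substance of the argument lies entirely in the reduction to Theorem \ref{thm:BlochGiesekerII}.
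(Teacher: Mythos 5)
Your proof is correct and follows essentially the same route as the paper: reduce to a smooth subvariety $Y$ of dimension $q$ representing (a positive multiple of) $h^{d-q}$ and apply the Bloch--Gieseker positivity theorem (Theorem \ref{thm:BlochGiesekerII}) to the restricted $\mathbb{R}$-twisted bundle, which has rank $e\ge q=\dim Y$. The only differences are cosmetic: you make explicit the harmless rescaling $h\mapsto mh$, the $q=0$ case, and the verification that ampleness of the twist is preserved under restriction, all of which the paper's proof elides.
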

\begin{proof}
Fix $q\le \min\{d,e\}$.  Without loss of generality we may assume $h$ is very ample.  Then the class $h^{d-q}$ is represented by a smooth subvariety $Y\subset X$ of dimension $q$.   Now $E|_Y$ is an ample $\mathbb R$-twisted bundle of rank $e\ge q$, so by Theorem \ref{thm:BlochGiesekerII}
$$0<\int_Y c_{q}(E)= \int_X c_{q} (E) h^{d-q}$$
as required.
\end{proof}


\begin{corollary}\label{cor:cn2nondegen}
Let $X$ have dimension $d\ge 2$ and $E$ be a $\mathbb R$-twisted ample and of rank $e=d-2$.   Then the intersection form $$Q(\alpha,\alpha') = \int_X \alpha c_{d-2}(E) \alpha \text{ for } \alpha,\alpha'\in H^{1,1}(X;\mathbb R)$$  is non-degenerate
\end{corollary}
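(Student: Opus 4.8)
The plan is to deduce this directly from the first Bloch--Gieseker theorem (Theorem \ref{thm:blochgiesekerI}) together with Poincaré duality. The starting observation is that non-degeneracy of $Q$ is equivalent to injectivity of the linear map
$$ L\colon H^{1,1}(X;\mathbb R)\to H^{d-1,d-1}(X;\mathbb R),\qquad \alpha\mapsto \alpha\wedge c_{d-2}(E). $$
Indeed, the cup-product pairing $H^{d-1,d-1}(X;\mathbb R)\times H^{1,1}(X;\mathbb R)\to\mathbb R$, $(\beta,\alpha')\mapsto \int_X\beta\wedge\alpha'$, is perfect by Poincaré duality, so a class $\alpha$ lies in the kernel of $Q$ (i.e.\ $Q(\alpha,\alpha')=0$ for all $\alpha'$) if and only if $L(\alpha)=\alpha\wedge c_{d-2}(E)=0$. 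Thus $Q$ is non-degenerate precisely when $L$ is injective.

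Next I would verify the numerical hypotheses of Theorem \ref{thm:blochgiesekerI} with $i=1$. Here $s=\min\{e,d\}=\min\{d-2,d\}=d-2$, so $(d-s)/2=1$ and the required inequality $i\le (d-s)/2$ holds (with equality). Since $E$ is $\mathbb R$-twisted ample of rank $e=d-2$, the theorem applies and shows that
$$ H^{1,1}(X;\mathbb R)\to H^{1+s,1+s}(X;\mathbb R)=H^{d-1,d-1}(X;\mathbb R),\qquad \alpha\mapsto\alpha\wedge c_s(E)=\alpha\wedge c_{d-2}(E) $$
is injective. This is exactly the map $L$ above, so $L$ is injective and hence $Q$ is non-degenerate.

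There is no real obstacle here: the content is entirely contained in Bloch--Gieseker I, and the only things to check are the bookkeeping of the index $s$ and the translation, via Poincaré duality, between injectivity of $L$ and non-degeneracy of the associated bilinear form. One may equivalently phrase the conclusion as the statement that $c_{d-2}(E)$ has the Hard Lefschetz property: since $H^{1,1}(X;\mathbb R)$ and $H^{d-1,d-1}(X;\mathbb R)$ have equal dimension by Poincaré duality, the injective map $L$ is in fact an isomorphism, which by Remark \ref{rmk:HRandLgeneralremarks}(1) is equivalent to the non-degeneracy of $Q$.
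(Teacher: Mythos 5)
Your proof is correct and is essentially identical to the paper's: both reduce non-degeneracy of $Q$ via duality to the injectivity of $\alpha\mapsto\alpha\wedge c_{d-2}(E)$ and then invoke Bloch--Gieseker I with $s=d-2$ and $i=1$. The only cosmetic difference is that the paper cites Serre duality where you cite Poincar\'e duality for the perfectness of the pairing.
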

\begin{proof}
Suppose $Q(\alpha,\beta)=0$ for all $\beta\in H^{1,1}(X;\mathbb R)$.   Then by Serre duality,  $\alpha c_{d-2}(E)=0$, and so Theorem \ref{thm:blochgiesekerI} yields $\alpha=0$.
\end{proof}

\section{The Hodge-Riemann property for $c_{d-2}(E)$}\label{sec:HRcd-2}
\subsection{The case $\rank(E) = d-2$}


\begin{proposition}\label{prop:cn2I}
Let $E$ be an ample $\mathbb R$-twisted bundle of rank $d-2$ on a projective manifold $X$ of dimension $d\ge 2$.  Then $c_{d-2}(E)$ has the Hodge-Riemann property with respect to any ample class $h$ on $X$.  
\end{proposition}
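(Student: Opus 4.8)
The plan is to deform $E$ through a family of ample $\mathbb R$-twisted bundles joining it to a purely ``classical'' situation, and to exploit the fact that along this family the relevant intersection form never degenerates, so its signature cannot jump. Concretely, write $\rho = \dim_{\mathbb R} H^{1,1}(X;\mathbb R)$ and, for $t\ge 0$, consider the symmetric bilinear form
$$ Q_t(\alpha,\alpha') := \int_X \alpha \wedge c_{d-2}(E\langle th\rangle)\wedge \alpha' \quad\text{on}\quad V := H^{1,1}(X;\mathbb R).$$
Since $E$ is ample and $h$ is ample, $E\langle th\rangle$ is again an ample $\mathbb R$-twisted bundle of rank $d-2$ for every $t\ge 0$: the class defining ampleness of $E$ on $\mathbb P(E)$ is ample, and adding the nef class $t\pi^*h$ keeps it ample. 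Hence by Corollary \ref{cor:cn2nondegen} each $Q_t$ is non-degenerate.

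Next I would reduce the Hodge-Riemann property to a statement about the signature of $Q_0 = Q_\Omega$, where $\Omega = c_{d-2}(E)$. Taking $\omega_0 = h$ as the reference class, the target $H^{d,d}(X;\mathbb R)$ of the map defining primitive cohomology in Definition \ref{def:HRmanifolds} is one dimensional, so that primitive cohomology is exactly $\{\alpha : Q_0(\alpha,h)=0\}$, i.e.\ the space $V_h$ of Definition-Lemma \ref{deflemma:HRofquadraticform}. Moreover $Q_0(h,h) = \int_X c_{d-2}(E)\,h^2 > 0$ by Corollary \ref{cor:nonzerochernclass}, so $h$ is a positive direction, and since $(d-k)/2 = 1$ the required definiteness is precisely negative definiteness on $V_h$. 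By the equivalence in Definition-Lemma \ref{deflemma:HRofquadraticform} it therefore suffices to prove that $Q_0$ has signature $(1,\rho-1)$.

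To identify this signature I would compute it at the opposite end of the family. The map $t\mapsto Q_t$ is polynomial and each $Q_t$ is non-degenerate, so its signature is constant on the connected set $[0,\infty)$ (a continuous path of non-degenerate forms stays in a single component of the space of such forms). Using \eqref{eq:chernclasstensorproduct:full} we have $c_{d-2}(E\langle th\rangle) = \sum_{k=0}^{d-2} t^{d-2-k}\, c_k(E)\, h^{d-2-k}$, whose leading term in $t$ is $t^{d-2}h^{d-2}$; thus the rescaled forms $t^{-(d-2)}Q_t$ converge as $t\to\infty$ to
$$ Q_\infty(\alpha,\alpha') = \int_X \alpha\wedge h^{d-2}\wedge \alpha'.$$
By the classical Hard Lefschetz and Hodge-Riemann theorems (Remark \ref{rmk:HRandLgeneralremarks}(3)), $h^{d-2}$ has the Hodge-Riemann property, so $Q_\infty$ is non-degenerate of signature $(1,\rho-1)$. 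Since signature is locally constant at non-degenerate forms and is unchanged under scaling by the positive constant $t^{-(d-2)}$, the form $Q_t$ has signature $(1,\rho-1)$ for all large $t$, hence for all $t\ge 0$, and in particular $Q_0$ does.

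The genuinely new input is contained entirely in the non-degeneracy Corollary \ref{cor:cn2nondegen} (which rests on Bloch-Gieseker); granting that, the argument is a soft deformation/continuity argument and I expect no serious obstacle. The one point requiring care is that the deformation must stay inside the non-degenerate locus for every $t\ge 0$ --- this is exactly what forces the signature at $t=0$ to agree with the easily computed ``classical'' signature at $t=\infty$ --- and so the crucial structural fact is precisely that ampleness of $E\langle th\rangle$ persists for all $t\ge 0$.
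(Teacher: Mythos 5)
Your proposal is correct and follows essentially the same route as the paper's proof: deform via $E\langle th\rangle$, use Corollary \ref{cor:cn2nondegen} to keep $Q_t$ non-degenerate for all $t\ge 0$, identify the signature at $t\to\infty$ with that of $h^{d-2}$ via the rescaled limit, and conclude positivity of $Q_0(h,h)$ from Corollary \ref{cor:nonzerochernclass}. No gaps.
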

\begin{proof}
By a consequence of the Bloch-Gieseker Theorem for ample $\mathbb R$-twisted vector bundles (Corollary \ref{cor:cn2nondegen}), for all $t\ge 0$  the intersection form 
$$Q_{t}(\alpha):= \int_X \alpha c_{d-2} (E\langle th \rangle) \alpha \text{ for } \alpha\in H^{1,1}(X;\mathbb R)$$ is non-degenerate.   Now for small $t$ we have $$ c_{d-2}(E\langle th\rangle) = t^{d-2} h^{d-2} + O(t^{d-3}).$$
Observe that for an intersection form $Q$, having signature $(1,h^{1,1}(X)-1)$ is invariant under multiplying $Q$ by a positive multiple, and is an  open condition as $Q$ varies continuously.  Thus since we know that $h^{d-2}$ has the Hodge-Riemann property, the intersection form $(\alpha,\beta)\mapsto \int_X \alpha h^{d-2} \beta$ has signature $(1,h^{1,1}(X)-1)$, and hence so does $Q_t$ for $t$ sufficiently large.   But $Q_t$ is non-degenerate for all $t\ge 0$, and hence $Q_t$ must have this same signature for all $t\ge 0$.

Next recall from Corollary \ref{cor:nonzerochernclass} that $\int_X c_{d-2}(E) h^2 >0$.  Thus  $c_{d-2}(E)$ has the Hodge-Riemann property with respect to $h$ as claimed.
\end{proof}

\subsection{The case $\rank(E) \ge d-1$}\label{sec:rankhigher}


\begin{theorem}\label{thm:HI2}
Let $X$ be a projective manifold of dimension $d\ge 2$ and $h$ be an ample class on $X$.  Suppose $E$ is an ample $\mathbb R$-twisted vector bundle of rank $e\ge d-1$ on $X$.    Then

\begin{enumerate}[(1)]
\item  For all $\alpha\in H^{1,1}(X;\mathbb R)$ it holds that 
   \begin{equation}\label{eq:hodgeindexiii}\int_X \alpha^2 c_{d-2}(E) \int_X h c_{d-1}(E) \le 2 \int_X \alpha h c_{d-2}(E) \int_X \alpha c_{d-1}(E)\end{equation}
  with equality  if and only if $\alpha=0$. 
  
\item  The class $c_{d-2}(E)$ has the Hodge-Riemann property with respect to $h$.   In fact if 
$$ W: = \{ \alpha \in H^{1,1}(X;\mathbb R) : \int_X \alpha c_{d-1}(E) = 0\}$$
then $\dim W = h^{1,1}(X)-1$ and the intersection form $$Q(\alpha,\alpha') = \int_X \alpha c_{d-2}(E) \alpha'\text{ for } \alpha,\alpha'\in H^{1,1}(X;\mathbb R)$$ is negative definite on $W$. 
\end{enumerate}

  \end{theorem}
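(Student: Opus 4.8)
The plan is to prove (1) and (2) together by induction on the quantity $r:=e-d+2\ge 1$, using the product $X\times\P^1$ to convert the problem into the block-form situation of Proposition \ref{prop:quadraticformonedimensionhigher}. Set $\tilde X:=X\times\P^1$, a projective manifold of dimension $\tilde d=d+1$, with projections $p_1\colon\tilde X\to X$, $p_2\colon\tilde X\to\P^1$, and let $\tilde E:=E\boxtimes\cO_{\P^1}(1)=p_1^*E\otimes p_2^*\cO_{\P^1}(1)$, an ample $\mathbb R$-twisted bundle of rank $\tilde e=e$. Ampleness of $\tilde E$ I would deduce from that of $E$ by the standard external-product argument: $\mathbb P(\tilde E)\cong\mathbb P(E)\times\P^1$ and $\cO_{\mathbb P(\tilde E)}(1)$ is the external tensor product of the ample classes $\cO_{\mathbb P(E)}(1)$ and $\cO_{\P^1}(1)$ (incorporating the twist), which is ample. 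Writing $\xi$ for the pullback to $\tilde X$ of the hyperplane class on $\P^1$, the K\"unneth formula gives $H^{1,1}(\tilde X;\mathbb R)=H^{1,1}(X;\mathbb R)\oplus\mathbb R\,\xi$, so that this space has precisely the form $W=V\oplus\mathbb R$ with $V=H^{1,1}(X;\mathbb R)$.

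Next I would compute the relevant Chern class. Since $\tilde E=(p_1^*E)\langle\xi\rangle$ and $\xi^2=0$, formula \eqref{eq:chernclasstensorproduct} gives, suppressing pullbacks,
$$c_{\tilde d-2}(\tilde E)=c_{d-1}(\tilde E)=c_{d-1}(E)+m\,c_{d-2}(E)\,\xi,\qquad m:=e-d+2>0.$$
Expanding the intersection form $\tilde Q(\tilde\alpha,\tilde\alpha')=\int_{\tilde X}\tilde\alpha\,c_{d-1}(\tilde E)\,\tilde\alpha'$ on classes $\tilde\alpha=\alpha+\lambda\xi$, and using $\int_{\tilde X}p_1^*\gamma\cdot\xi=\int_X\gamma$ together with $\xi^2=0$, one finds exactly the block form
$$\tilde Q(\alpha\oplus\lambda,\alpha'\oplus\lambda')=Q_V(\alpha,\alpha')+\lambda'\phi(\alpha)+\lambda\phi(\alpha'),$$
where $Q_V(\alpha,\alpha'):=m\int_X\alpha\,c_{d-2}(E)\,\alpha'$ and $\phi(\alpha):=\int_X\alpha\,c_{d-1}(E)$. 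Thus $\tilde Q$ is the form $Q_W$ of Proposition \ref{prop:quadraticformonedimensionhigher} with this $Q_V$ and $\phi$.

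The crucial input is that $Q_W=\tilde Q$ has the Hodge-Riemann property, equivalently signature $(1,\dim V)$; this is where the induction enters, since $c_{\tilde d-2}(\tilde E)$ is precisely the class governed by the base case or the inductive hypothesis, as $\tilde r=\tilde e-\tilde d+2=r-1$. When $r=1$ we have $\tilde e=\tilde d-2$ and the Hodge-Riemann property of $c_{\tilde d-2}(\tilde E)$ is Proposition \ref{prop:cn2I}; when $r\ge2$ it is statement (2) of the present theorem applied to $(\tilde X,\tilde E)$, which has strictly smaller $r$. Either way, the quadratic-form signature $(1,\dim H^{1,1}(\tilde X)-1)=(1,\dim V)$ is exactly the hypothesis on $Q_W$. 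To supply the vector required by the proposition I would take $h$ to be the given ample class, viewed in $V$: Corollary \ref{cor:nonzerochernclass} yields $Q_V(h)=m\int_X h^2c_{d-2}(E)>0$ and $\phi(h)=\int_X h\,c_{d-1}(E)>0$, which are hypotheses (a) and (b).

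With all hypotheses in place, Proposition \ref{prop:quadraticformonedimensionhigher} applies. Its part (i) reads $m\int_X\alpha^2c_{d-2}(E)\int_X h\,c_{d-1}(E)\le 2m\int_X\alpha h\,c_{d-2}(E)\int_X\alpha\,c_{d-1}(E)$ with equality iff $\alpha=0$; dividing by $m>0$ gives \eqref{eq:hodgeindexiii} and hence (1). Its part (ii) says $Q_V$, and so $Q$ after removing the positive factor $m$, is negative definite on $\ker\phi=W=\{\alpha:\int_X\alpha\,c_{d-1}(E)=0\}$, a subspace of codimension $1$ since $\phi(h)\neq0$; combined with $Q(h)>0$, Definition-Lemma \ref{deflemma:HRofquadraticform} turns this into the Hodge-Riemann property of $c_{d-2}(E)$ with respect to $h$, giving (2). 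I expect the only real obstacle to be the setup rather than the estimates: one must recognize that passing to $X\times\P^1$ with the twist $\cO_{\P^1}(1)$ converts the codimension-$(d-2)$ problem of one higher rank into the block form of Proposition \ref{prop:quadraticformonedimensionhigher}, and organize the induction so that its base case lands exactly on Proposition \ref{prop:cn2I}; once this is arranged, both conclusions are a formal translation of that proposition.
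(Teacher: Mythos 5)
Your proof is correct and takes essentially the same route as the paper: the same passage to $X\times\P^1$ with $E\boxtimes\cO_{\P^1}(1)$, the same computation $c_{d-1}(\tilde E)=c_{d-1}(E)+(e-d+2)\,c_{d-2}(E)\,\xi$, and the same application of Proposition \ref{prop:quadraticformonedimensionhigher} with base case Proposition \ref{prop:cn2I}. The only difference is organizational: the paper runs the induction as two interleaved families of statements $(P_j)$ and $(Q_j)$ with $(Q_j)\Rightarrow(P_j)$ and $(P_{j-1})\Rightarrow(Q_j)$, whereas you prove (1) and (2) together by a single induction on $e-d+2$.
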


\begin{proof}

Consider the following two statements that depend on a  given $j\ge 0$ \bigskip

($P_j$) For any projective manifold $X'$ of dimension $d'\ge 2$, any ample class $h'$ on $X'$, and any ample $\mathbb R$-twisted vector bundle $E'$ on $X'$ with $\rank(E') =d'-2+j$ the class $c_{d'-2}(E')$ has the Hodge-Riemann property with respect to $h'$.
\bigskip

($Q_j$)  For any projective manifold $X'$ of dimension $d'\ge 2$, any ample class $h'$ on $X'$ and any ample $\mathbb R$-twisted vector bundle $E'$ on $X'$ with $\rank(E') =d'-2+j$ and any $\alpha\in H^{1,1}(X';\R)$ it holds that
$$\int_{X'} \alpha^2 c_{d'-2}(E') \int_{X'} h' c_{d'-1}(E') \le 2 \int_{X'} \alpha h' c_{d'-2}(E')\int_{X'}\alpha c_{d'-1}(E')$$
with equality if and only if $\alpha=0$.\bigskip

Then statement $(P_0)$ holds, as this is  the content of Proposition \ref{prop:cn2I}.   We will show that 
\begin{enumerate}[(a)]
\item $(Q_j)\Rightarrow (P_j) \text{ for all } j\ge 1,$
\item $(P_{j-1})\Rightarrow (Q_{j}) \text{ for all } j\ge 1.$
\end{enumerate}


Clearly these together imply that $(Q_j)$ holds for all $j\ge 1$ which is precisely statement (1) of the Theorem.\bigskip

Proof of (a): Let $j\ge 1$ and assume that $(Q_j)$ holds.  Let $X'$ be a projective manifold of dimension $d'$ and $E'$ be an $\mathbb R$-twisted ample vector bundle with $\rank(E') = d'-2+j$ and $h'$ be an ample class on $X'$.   Then since $(Q_j)$ is assumed to hold,
the quadratic form
\begin{equation}(\alpha,\alpha') \mapsto \int_{X'} \alpha c_{d'-2}(E')\alpha' \text{ for } \alpha,\alpha'\in H^{1,1}(X';\mathbb R)\label{eq:quadraticform55}\end{equation}
is negative definite on the space
$$ W': = \{ \alpha \in H^{1,1}(X';\mathbb R) : \int_{X'} \alpha c_{d'-1}(E') = 0 \}.$$
But ampleness of $E'$ implies (Corollary \ref{cor:nonzerochernclass}) that $h'\notin W'$, and so $W'$ has codimension 1 in $H^{1,1}(X';
\mathbb R)$.  Thus the  quadratic form in \eqref{eq:quadraticform55} has signature $(1,h^{1,1}(X')-1)$ and so $c_{d'-2}(E)$ has the Hodge-Riemann property.  Hence $(P_j)$ holds and we have proved (a).  Observe that in doing so we have also proved that item (1) in the Theorem implies item (2).\bigskip

Proof of (b): Suppose $j\ge 1$ and $(P_{j-1})$ holds and we want to show $(Q_j)$.  To this end let $X$ be a projective manifold of dimension $d$ and $h$ be an ample class on $X$ and $E$ be an ample $\mathbb R$-twisted vector bundle on $X$ with $\rank(E)=:e:=d-2+j$.      We have to show that
for any $\alpha\in H^{1,1}(X;\R)$ it holds that
\begin{equation}\label{eq:Qjrepeat}\int_{X} \alpha^2 c_{d-2}(E) \int_{X} h c_{d-1}(E) \le 2 \int_{X} \alpha h c_{d-2}(E)\int_{X}\alpha c_{d-1}(E)\end{equation}
with equality if and only if $\alpha=0$.

Set
$X' := X\times \mathbb P^1$ and ${E'} : = E\boxtimes \mathcal O_{\mathbb P^1}(1)$ which is an ample $\mathbb R$-twisted bundle.    Observe that $d':= \dim(X') = d+1$ and  
$$\rank({E}') =\rank(E) = d-2+j = (d+1) - 2 + (j-1)= d' -2 + (j-1).$$
Hence by the assumption $(P_{j-1})$ we know that $c_{d'-2}(E')$ has the Hodge-Riemann property.
Write $\tau: = c_1(\mathcal O_{\mathbb P^1}(1))$ and
$$ H^{1,1}(X';\mathbb R) = H^{1,1}(X;\mathbb R) \oplus H^{1,1}(\mathbb P^1;\R) = H^{1,1}(X;\mathbb R) \oplus \mathbb R\langle \tau \rangle.$$
Observe $1\le d'-2=d-1\le d-2+j=e$ and moreover $e-(d'-2)+1 = e-d+2=j$.  So using the identity for the Chern class of a tensor product \eqref{eq:chernclasstensorproduct} and the fact that $\tau^2=0$ we get
$$c_{d'-2}(E') = c_{d-1}(E') = c_{d-1}(E) + j c_{d-2}(E) \tau.$$
Now define
$$\phi:H^{1,1}(X;\mathbb R)\to \mathbb R \text{ by } \phi(\alpha): = \int_{X} \alpha c_{d-1}(E),$$
and
$$\mathcal Q_X(\alpha,\alpha') = j\int_X \alpha c_{d-2}(E) \alpha' \text{ for } \alpha,\alpha'\in H^{1,1}(X;\mathbb R).$$
Then
\begin{align*} \mathcal Q_{X\times \mathbb P^1}(\alpha \oplus \lambda \tau, \alpha'\oplus \lambda'\tau) &:=\int_{X\times \mathbb P^1} (\alpha + \lambda \tau) c_{d'-2}(E') (\alpha' + \lambda'\tau) \\&=  \mathcal Q_X(\alpha,\alpha') + \lambda\phi(\alpha') + \lambda' \phi(\alpha)\end{align*}
which as we have already observed has the Hodge-Riemann property.   Finally notice that as $E$ is ample we have $\mathcal Q_{X}(h)>0$ and $\phi(h)>0$.    Thus we are in precisely the setup of Proposition \ref{prop:quadraticformonedimensionhigher}  giving
$$\mathcal Q_X(\alpha) \phi(h) \le 2 \mathcal Q_X(\alpha,h) \phi(\alpha)$$
with equality if and only if $\alpha=0$, which yields \eqref{eq:Qjrepeat}.  Hence $(Q_j)$ holds and the proof of (b) is complete.
\end{proof}

\begin{corollary}\label{cor:cn-2HR}
Suppose that $E$ is an ample $\mathbb R$-twisted vector bundle on a projective manifold $X$ of dimension $d$ and $\rank(E)\ge d-2$.  Then $c_{d-2}(E)$ has the Hodge-Riemann property.  In particular for all $\alpha\in H^{1,1}(X;\mathbb R)$ we have
\begin{equation}\int_X \alpha^2 c_{d-2}(E) \int_X h^2 c_{d-2}(E) \le \left(\int_X \alpha c_{d-2}(E) h \right)^2\label{eq:hodgeindexcn-2definitive}\end{equation}
with equality if and only if $\alpha$ is proportional to $h$.
\end{corollary}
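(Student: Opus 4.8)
The plan is to treat the Hodge-Riemann property by assembling the two cases already established according to the rank of $E$, and then to obtain the stated inequality \eqref{eq:hodgeindexcn-2definitive} as a purely formal consequence of the abstract Hodge-Index inequality of Definition-Lemma \ref{deflemma:HRofquadraticform}.

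First I would split on the rank. If $\rank(E) = d-2$, the Hodge-Riemann property of $c_{d-2}(E)$ with respect to any ample class $h$ is exactly Proposition \ref{prop:cn2I}. If instead $\rank(E) \ge d-1$, it is exactly item (2) of Theorem \ref{thm:HI2}. Since the hypothesis $\rank(E)\ge d-2$ is covered by these two ranges, the Hodge-Riemann property holds in every case.

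Next I would pass to the linear-algebra formulation. Set $V := H^{1,1}(X;\mathbb R)$ and let $Q(\alpha,\alpha') := \int_X \alpha\, c_{d-2}(E)\, \alpha'$ be the associated intersection form, with quadratic form $Q(\alpha) = Q(\alpha,\alpha)$. Taking $\Omega = c_{d-2}(E)$ and $k = d-2$ in Definition \ref{def:HRmanifolds}, the factor $(-1)^{(d-k)/2}$ equals $-1$, so the cohomological Hodge-Riemann property just established says that $Q$ is negative definite on the codimension-one primitive space and positive on the ample direction; equivalently $Q$ has signature $(1, h^{1,1}(X)-1)$. By Corollary \ref{cor:nonzerochernclass} we also have $Q(h) = \int_X h^2 c_{d-2}(E) > 0$, so $h$ witnesses the hypothesis of Definition-Lemma \ref{deflemma:HRofquadraticform}, and $Q$ has the Hodge-Riemann property in the sense of that lemma.

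Finally I would invoke the equivalence $(1)\Leftrightarrow(4)$ of Definition-Lemma \ref{deflemma:HRofquadraticform} with $h'=h$. Since $Q(h)>0$, the Hodge-Index inequality \eqref{eq:hodgeindex} applied to $v=\alpha$ reads
$$\int_X \alpha^2 c_{d-2}(E) \int_X h^2 c_{d-2}(E) \le \left(\int_X \alpha\, c_{d-2}(E)\, h\right)^2,$$
with equality precisely when $\alpha$ is proportional to $h$, which is exactly \eqref{eq:hodgeindexcn-2definitive}. Beyond this bookkeeping there is no real obstacle, as the substantive work lies in Proposition \ref{prop:cn2I} and Theorem \ref{thm:HI2}; the only points needing care are the sign check that $(-1)^{(d-k)/2} = -1$ for $k=d-2$, so that the negative-definiteness in those results matches the signature $(1,h^{1,1}(X)-1)$, and the verification that $Q(h)>0$ so that the abstract inequality may be applied.
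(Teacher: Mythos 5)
Your proposal is correct and follows the same route as the paper, whose proof is the one-line observation that the statement is Proposition \ref{prop:cn2I} when $\rank(E)=d-2$ and Theorem \ref{thm:HI2} when $\rank(E)\ge d-1$. Your additional bookkeeping deriving \eqref{eq:hodgeindexcn-2definitive} from Definition-Lemma \ref{deflemma:HRofquadraticform}(4), including the sign check $(-1)^{(d-k)/2}=-1$ and the positivity $\int_X h^2 c_{d-2}(E)>0$ from Corollary \ref{cor:nonzerochernclass}, is exactly the implicit content the paper leaves to the reader.
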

\begin{proof}
This is Proposition \ref{prop:cn2I} when $\rank(E) =d-2$ and Theorem \ref{thm:HI2} when $\rank(E)\ge d-1$.
\end{proof}

The results proved in this section will be essential in our proof of the Hodge-Riemann property for Schur classes.   In fact, what we will need is that both the above Hodge-Index inequality and the more general inequality  \eqref{eq:hodgeindexiii} continue to hold if $E$ and $h$ are merely nef on a base that is irreducible but not necessarily smooth. 

\begin{corollary}\label{cor:inequalitiesinnefcase}
Let $P$ be a smooth projective variety, and $h$ be a nef class on $P$.  Suppose that $C\subset P$ is irreducible of dimension $n$ and that $E$ is a nef $\mathbb R$-twisted bundle on $P$.   For $\alpha,\alpha'\in H^{1,1}(P;\mathbb R)$ set
$$\mathcal Q(\alpha,\alpha'): = \int_{C} \alpha c_{n-2}(E) \alpha'$$
$$\phi(\alpha) := \int_C \alpha c_{n-1}(E).$$
Then for all $\alpha\in H^{1,1}(P;\mathbb R)$ we have
\begin{equation}\mathcal Q(\alpha) \mathcal Q(h) \le \mathcal Q(\alpha,h)^2\label{eq:HInef}\end{equation}
and
\begin{equation}\mathcal Q(\alpha) \phi(h) \le 2 \mathcal Q(\alpha,h) \phi(\alpha).\label{eq:HIplusnef}\end{equation}
(We emphasise that we are making no claims here as to what happens when equality holds in \eqref{eq:HInef} or \eqref{eq:HIplusnef}).
\end{corollary}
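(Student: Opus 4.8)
The plan is to reduce the two inequalities \eqref{eq:HInef} and \eqref{eq:HIplusnef} to the already-established inequalities for ample $\mathbb R$-twisted bundles on a \emph{smooth} base, namely the Hodge-Index inequality \eqref{eq:hodgeindexcn-2definitive} of Corollary \ref{cor:cn-2HR} and the enhanced inequality \eqref{eq:hodgeindexiii} of Theorem \ref{thm:HI2}. This is done by resolving the singularities of $C$, perturbing the nef data into ample data, and finally passing to the limit. Since the corollary makes no assertion about equality, a continuity argument suffices throughout.

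First I would resolve $C$. Let $\nu\colon \tilde C\to C$ be a resolution of singularities with $\tilde C$ a smooth projective variety of dimension $n$, and set $f:=\iota\circ \nu\colon \tilde C\to P$, where $\iota\colon C\hookrightarrow P$ denotes the inclusion. As $f$ is birational onto $C$ we have $f_*[\tilde C]=[C]$, so the projection formula together with the naturality of Chern classes, $f^*c_i(E)=c_i(f^*E)$, yields for all $\alpha,\alpha'\in H^{1,1}(P;\R)$
\[
\mathcal Q(\alpha,\alpha')=\int_{\tilde C} f^*\alpha\, c_{n-2}(f^*E)\, f^*\alpha', \qquad \phi(\alpha)=\int_{\tilde C} f^*\alpha\, c_{n-1}(f^*E).
\]
Writing $\tilde\alpha:=f^*\alpha$, $\tilde h:=f^*h$ and $\tilde E:=f^*E$, the bundle $\tilde E$ is a nef $\mathbb R$-twisted bundle and $\tilde h$ is a nef class on the projective manifold $\tilde C$, since pullbacks of nef objects are nef. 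Thus the statement is reduced to nef data on a smooth base.

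To remove nefness, fix an ample class $A$ on $\tilde C$. For every $\epsilon>0$ the twist $\tilde E\langle\epsilon A\rangle$ is ample and $\tilde h+\epsilon A$ is an ample class; this is the standard fact that a nef bundle becomes ample after an ample twist (see \cite{Lazbook2}). For \eqref{eq:HInef}, whenever $\rank(E)\ge n-2$ I would apply \eqref{eq:hodgeindexcn-2definitive} of Corollary \ref{cor:cn-2HR} to the ample bundle $\tilde E\langle\epsilon A\rangle$ and the ample class $\tilde h+\epsilon A$ on $\tilde C$, with $\tilde\alpha$ in the role of $\alpha$; for \eqref{eq:HIplusnef}, whenever $\rank(E)\ge n-1$ I would likewise apply \eqref{eq:hodgeindexiii} of Theorem \ref{thm:HI2}(1). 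By \eqref{eq:defcherntwisted} each $c_i(\tilde E\langle\epsilon A\rangle)$ is a polynomial in $\epsilon$ with constant term $c_i(\tilde E)$, so all the integrals occurring depend polynomially on $\epsilon$ and converge as $\epsilon\to 0$ to the corresponding expressions for $\tilde E$ and $\tilde h$; as the inequalities are non-strict they are preserved in the limit, giving \eqref{eq:HInef} and \eqref{eq:HIplusnef}. The remaining low-rank cases are trivial: if $\rank(E)<n-1$ then $c_{n-1}(E)=0$, so $\phi\equiv 0$ and \eqref{eq:HIplusnef} reads $0\le 0$; and if moreover $\rank(E)<n-2$ then $c_{n-2}(E)=0$ as well, whence \eqref{eq:HInef} reads $0\le 0$.

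The argument is essentially soft, and I expect no serious obstacle once the reduction is in place. The only points requiring care are bookkeeping ones: justifying $f_*[\tilde C]=[C]$ and the projection formula on the possibly singular $C$ (which is precisely why one works on the resolution $\tilde C$ rather than on $C$ directly), and confirming that $\tilde E\langle\epsilon A\rangle$ is genuinely ample for \emph{every} $\epsilon>0$ (which follows from $h_{\P(\tilde E)}+\pi^*\delta$ being nef and relatively ample, by Kleiman's criterion applied on $\P(\tilde E)$). The two relaxed hypotheses — non-smoothness of the base and nefness rather than ampleness of the data — are handled independently by the resolution and the $\epsilon$-perturbation and do not interact. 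The loss of the equality characterisation is unavoidable in this limiting approach, consistent with the wording of the corollary.
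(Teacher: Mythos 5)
Your proof is correct and follows essentially the same route as the paper's: the paper likewise passes to a resolution of singularities of $C$ (using the projection formula to transport the integrals), perturbs the nef data to ample data via an ample twist $E\langle t\eta\rangle$ and $h+t\eta$, invokes Corollary \ref{cor:cn-2HR} and Theorem \ref{thm:HI2}, and lets the parameter tend to zero, noting the trivial low-rank cases. The only difference is the order of the two reductions (the paper perturbs first in the smooth case and then resolves; you resolve first and then perturb), which is immaterial.
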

\begin{proof}
Suppose first that $C=P$  (so in particular $C$ is smooth).  If $\rank(E)<n-2$ then $\mathcal Q$ is identically zero and there is nothing to prove.  So we may assume $\rank(E)\ge n-2$.  Let $\eta$ be an ample class on $P$.  Then for any $t>0$ the bundle $E\langle t\eta\rangle$ is ample and the class $h_{t}: = h + t\eta$ is ample.  Now set
$$\mathcal R_t(\alpha,\alpha'): = \int_C \alpha c_{n-2}(E\langle t \eta\rangle) \alpha'$$
$$\phi_t(\alpha) : = \int_C \alpha c_{n-1}(E\langle t \eta\rangle).$$
Then we have from Proposition \ref{prop:cn2I} and Theorem \ref{thm:HI2} respectively that for all $\alpha\in H^{1,1}(P;\mathbb R)$ it holds that
\begin{equation}\mathcal R_t(\alpha) \mathcal R_t(h_t) \le \mathcal R_t(\alpha,h_t)^2\label{eq:HInef:repeate}\end{equation}
and
\begin{equation}\mathcal R_t(\alpha) \phi_t(h_t) \le 2 \mathcal R_t(\alpha,h_t) \phi_t(\alpha)\label{eq:HIplusnef:repeat}\end{equation}
(observe that the latter inequality holds trivially if $\rank(E) = n-2$ for then $\phi_t=0$, and otherwise Theorem \ref{thm:HI2}  applies).   Letting $t\to 0$ gives \eqref{eq:HInef} and \eqref{eq:HIplusnef} which completes the proof when $C$ is smooth.

Now suppose that $C$ is irreducible of dimension $n$ inside $P$ as in the statement of the theorem.  Let $\pi:C'
\to C$ be a resolution of singularities.  We denote the induced morphism $C'\to P$ also by $\pi$, so there is a pullback map
$$\pi^*: H^{1,1}(P;\mathbb R) \to H^{1,1}(C';\mathbb R).$$
Observe that $E':= \pi^*E$ and $h':= \pi^* h$ are nef on $C'$.  So by the previous paragraph the result we want applies for the triple $(C',E',h')$.   
Now for any $\alpha,\alpha'\in H^{1,1}(P;\mathbb R)$ we have 
$\int_{C} c_{n-2}(E) \alpha \alpha' = \int_{C'} c_{n-2}(\pi^*E) (\pi^*\alpha) (\pi^*\alpha')$ and  also $$\int_C c_{n-1}(E) \alpha = \int_{C'} c_{n-1}(E')\pi^* \alpha.$$  Hence the result for $C$ follows from that for $C'$.
\end{proof}

\section{The Hodge-Riemann property for cone classes}\label{sec:cone_classes}


\subsection{Statement}

Let $X$ be smooth, projective of dimension $d\ge 4$ and  $h\in H^{1,1}(X;\mathbb R)$ be very ample.  Let $\pi:F\to X$ be an ample $\mathbb R$-twisted vector bundle on $X$ of rank $f+1$.
Consider
$$ \pi:P : = \mathbb P_{\sub}(F)\to X$$
and denote by $U$ the universal quotient
$$0\to K\to \pi^* F\to U \to 0$$
 so $U$ is a nef $\mathbb R$-bundle (recall the universal quotient in the $\mathbb R$-twisted case was discussed in \eqref{eq:universalquotienttwisted}).   Suppose $C\subset P$ is a 
  subvariety of codimension $d-2$ that is flat over $X$ with irreducible fibers (in fact in the case of interest $C$ will be locally a product). The main result of this section is the following:

\begin{theorem}\label{thm:maincone}
Assume $f\ge d$ and set $n=\dim C$.  Then for $2\le i\le d$ the bilinear form
$$(\alpha,\alpha') \mapsto\int_{C} (\pi^* \alpha) c_{n-2-(d-i)}(U) (\pi^* \alpha')  (\pi^* h)^{d-i}  \text{ for } \alpha,\alpha' \in H^{1,1}(X;\mathbb R)$$
has the Hodge-Riemann property (i.e. it has signature $(1,h^{1,1}(X)-1))$. 
\end{theorem}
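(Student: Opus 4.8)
The plan is to deduce the statement from the elementary block-form inequality of Proposition \ref{prop:quadraticformonedimensionhigher}, fed by the nef Hodge-Index inequalities of Corollary \ref{cor:inequalitiesinnefcase}, with the factor $(\pi^*h)^{d-i}$ absorbed by passing to hyperplane sections of $X$. Throughout set $k:=d-i$, so $0\le k\le d-2$ and the relevant Chern index is $n-2-k$, and write
$$\mathcal Q(\alpha,\alpha'):=\int_{C}(\pi^*\alpha)\,c_{n-2-k}(U)\,(\pi^*\alpha')\,(\pi^*h)^{k},\qquad \phi(\alpha):=\int_{C}(\pi^*\alpha)\,c_{n-1-k}(U)\,(\pi^*h)^{k}$$
for the form in question and its natural companion functional. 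Since $h$ is very ample I would first realize $(\pi^*h)^{k}$ by the class of $C_Y:=C\cap\pi^{-1}(Y)$, where $Y\subset X$ is a smooth complete intersection of $k$ general members of $|h|$; flatness of $C$ over $X$ with irreducible fibers guarantees that $C_Y$ is irreducible of dimension $n-k$, and that $c_{n-2-k}(U)$ restricted to $C_Y$ is precisely $c_{(\dim C_Y)-2}(U|_{C_Y})$. Applying Corollary \ref{cor:inequalitiesinnefcase} to the nef bundle $U$ and the nef class $\pi^*h$ on the irreducible variety $C_Y$ then yields, for every $\alpha\in H^{1,1}(X;\R)$, both
$$\mathcal Q(\alpha)\,\mathcal Q(h)\le \mathcal Q(\alpha,h)^2\qquad\text{and}\qquad \mathcal Q(\alpha)\,\phi(h)\le 2\,\mathcal Q(\alpha,h)\,\phi(\alpha).$$

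The first inequality is already decisive for the signature: once we know $\mathcal Q(h)>0$ it forces $\mathcal Q\le 0$ on the codimension-one subspace $\{\mathcal Q(\cdot,h)=0\}$, so $\mathcal Q$ has at most one positive square. Hence, by the equivalences of Definition-Lemma \ref{deflemma:HRofquadraticform}, the Hodge-Riemann property for $\mathcal Q$ is equivalent to the single extra input that $\mathcal Q$ is \emph{non-degenerate}, equivalently that the Hodge-Index inequality above is strict unless $\alpha$ is proportional to $h$. The positivity $\mathcal Q(h)=\int_{C}c_{n-2-k}(U)(\pi^*h)^{k+2}>0$ I would get from Bloch-Gieseker positivity (Corollary \ref{cor:nonzerochernclass}): along the fibers of $C\to X$ the bundle $U$ restricts to the \emph{ample} universal quotient on projective space, so the fiberwise integrals are strictly positive, and pairing against the ample $h^{k+2}$ on the base preserves positivity. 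Thus the whole theorem is reduced to the \emph{strict} form of the Hodge-Index inequality for $\mathcal Q$.

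To produce strictness I would invoke Proposition \ref{prop:quadraticformonedimensionhigher}, whose conclusion (i) is exactly the enhanced inequality $\mathcal Q(\alpha)\phi(h)\le 2\mathcal Q(\alpha,h)\phi(\alpha)$ \emph{with equality iff $\alpha=0$}, and whose conclusion (ii) is the sought non-degeneracy. Its hypothesis is that the block extension
$$Q_W(\alpha\oplus\lambda,\alpha'\oplus\lambda')=\mathcal Q(\alpha,\alpha')+\lambda\phi(\alpha')+\lambda'\phi(\alpha)$$
on $W=H^{1,1}(X;\R)\oplus\R$ has the Hodge-Riemann property, the positivity conditions $Q_W(h)=\mathcal Q(h)>0$ and $\phi(h)>0$ being supplied as above. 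The point is that $Q_W$ should itself be the cone form attached to a datum on the one-higher-dimensional base $X\times\mathbb P^1$, built from $F\boxtimes\mathcal O_{\mathbb P^1}(1)$: writing $\tau=c_1(\mathcal O_{\mathbb P^1}(1))$, the bottom-right corner of $Q_W$ vanishes because $\tau^2=0$, while the twist formula \eqref{eq:chernclasstensorproduct} applied to $U\langle\tau\rangle$ extracts the off-diagonal $\phi$ as the $\tau$-linear part of the relevant Chern class. Since $X\times\mathbb P^1$ has strictly smaller defect $\rank(U)-\dim(X\times\mathbb P^1)$, an induction on the defect $\rank(U)-\dim X$ lets me assume $Q_W$ has the Hodge-Riemann property, and Proposition \ref{prop:quadraticformonedimensionhigher} then returns it for $\mathcal Q$ itself; the degenerate case $i=d$, where $\phi\equiv 0$, and the base case $\rank(U)=\dim X$ must be handled directly.

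The step I expect to be the main obstacle is the construction, on $X\times\mathbb P^1$, of the auxiliary cone $\hat C\subset\mathbb P_{\sub}(F\boxtimes\mathcal O_{\mathbb P^1}(1))$ that realizes $Q_W$ as an honest instance of the theorem. The naive choice $\hat C=C\times\mathbb P^1$ has the wrong codimension (it is off by one from the required $\dim(X\times\mathbb P^1)-2$), so $\hat C$ must instead be cut out fiberwise, most likely as a universal hyperplane/incidence variety governed by the tautological sequence \eqref{eq:universalquotienttwisted}, arranged so that it stays flat over $X\times\mathbb P^1$ with irreducible fibers, remains of cone type, and produces exactly the block form of $Q_W$ rather than $\mathcal Q$ plus an extraneous term. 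Verifying these compatibilities, together with the base and degenerate cases, is the technical heart of the argument; conceptually the essential maneuver is that the equality clause of Proposition \ref{prop:quadraticformonedimensionhigher} is what upgrades the equality-free nef inequalities of Corollary \ref{cor:inequalitiesinnefcase} into the strict, non-degenerate statement encoding the Hodge-Riemann property.
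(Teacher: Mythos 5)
There is a genuine gap, and it sits exactly where you flagged it: the proposed induction does not close. You want to induct on the ``defect'' by realizing the block form $Q_W$ as the cone form of an auxiliary cone $\hat C\subset\mathbb P_{\sub}(F\boxtimes\mathcal O_{\mathbb P^1}(1))$ over $X\times\mathbb P^1$, but no such cone of the required codimension $\dim(X\times\mathbb P^1)-2=d-1$ is constructed, and the scheme is structurally problematic: the theorem's hypothesis $f\ge d$ becomes $f\ge d+1$ on the product, so passing to $X\times\mathbb P^1$ makes the hypothesis \emph{harder} to satisfy rather than reducing to a smaller case (if $f=d$ on $X$ the inductive call is outside the theorem's range). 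The paper avoids this entirely: it never changes the base. Instead it inducts on $i$ (the exponent of $\pi^*h$), keeping $P$, $C$ and $U$ fixed, and works with the forms $\mathcal Q_i$ on \emph{all} of $H^{1,1}(P;\mathbb R)$ --- crucially including the relative class $\zeta=-c_1(K)$ --- interleaving two properties: $(A_i)$ (Hodge--Riemann for $\mathcal Q_i$ on $H^{1,1}(P;\mathbb R)$) and $(B_i)$ (strictness of the enhanced inequality $\mathcal G_i(\beta)\le 0$). The implications $(B_i)\Rightarrow(A_{i+1})$ and $(A_i)\Rightarrow(B_i)$ are obtained by first-order perturbation arguments: one differentiates the non-positive functions $t\mapsto\mathcal F_{i+1}(\beta;U\langle t\pi^*h\rangle)$ and $t\mapsto\mathcal G_i(\beta+t\beta')$ at an interior zero and exploits the identities $c_p(U)=c_{p-1}(U)\zeta+\pi^*c_p(F)$. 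The base case $(A_2)$ comes from the classical Hodge--Riemann relations for $h$ on $X$, and the endpoint $i=d$ (where, as you note, $\phi_d\equiv 0$) needs a separate argument using the Hodge--Index inequality for $\mathcal Q_{d-2}$ evaluated against $\zeta$. Your instinct that Proposition \ref{prop:quadraticformonedimensionhigher} plus the nef inequalities of Corollary \ref{cor:inequalitiesinnefcase} should drive the strictness is the right one --- that is literally how Theorem \ref{thm:HI2} is proved --- but for cone classes the paper replaces the product-with-$\mathbb P^1$ trick by this internal $(A_i)/(B_i)$ ladder.

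A second, independent error: you derive $\mathcal Q(h)=\int_C c_{n-2-k}(U)(\pi^*h)^{k+2}>0$ from a fiberwise Bloch--Gieseker argument, asserting that $U$ restricts to the ample universal quotient on the fibers. It does not: on a fiber of $P\to X$ the universal quotient is a quotient of a trivial bundle, hence nef and globally generated but not ample. Moreover for $i>2$ the Chern degree $n-d+i-2$ exceeds the fiber dimension $n-d$ of $C\to X$, so the integral does not factor as (fiber integral)$\times$(base integral) and no Fubini-type reduction applies. This positivity is precisely the Fulton--Lazarsfeld positivity theorem for cone classes of ample bundles, which the paper quotes as \eqref{eq:thmfl}; it is a genuine input, not a consequence of Corollary \ref{cor:nonzerochernclass}.
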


\subsection{Setup for the proof}\label{sec:setupforproof}

Since 
$$0\to K\to \pi^* F\to U \to 0$$
we have $U$ is a quotient of the nef 
 $\mathbb R$-twisted  bundle $\pi^*F$ and hence $U$ is nef of rank $f$. 
Set
$$\zeta : =- c_1(K)$$ 
which is relatively ample over $X$ (but note we do not claim any further positivity of $\zeta$).
Then
$$H^{1,1}(P;\mathbb R) = \pi^* H^{1,1}(X;\mathbb R) \oplus \mathbb R\zeta.$$
We have
$$ n   = \dim C = f+ d-(d-2) = f+2$$
so our hypothesis $f\ge d$ implies
\begin{equation}n\ge d+2.\label{eq:assumptionnd}\end{equation}
For convenience set
\begin{equation}n_i:= n-d+i\label{eq:boundni}.\end{equation}
and observe that by \eqref{eq:assumptionnd},
\begin{equation} i+2\le n_i\le n \text{ for } 1\le i\le d.\label{eq:inequalityni}\end{equation}

\begin{definition}
Given a $\mathbb R$-twisted vector bundle $U'$ on $P$ and $1\le i\le d$ define a bilinear form on $H^{1,1}(P;\mathbb R)$  by
 \begin{align*}
\mathcal Q_i(\beta,\beta';U'):= \int_{C} \beta c_{n-d+i-2}(U') (\pi^*h)^{d-i} \beta'\end{align*}
for $\beta,\beta'\in H^{1,1}(P;\mathbb R)$.  We also set 
 $$\mathcal F_i(\beta;U') := \mathcal Q_i(\beta;U') \mathcal Q_i(\pi^* h;U') - \left(\mathcal Q_i(\beta, \pi^*h;U')\right)^2.$$
 When $U'$ is taken to be the universal quotient on $P$ we write these as
 $$\mathcal Q_i(\beta,\beta'): = \mathcal Q_i(\beta,\beta';U)$$
 and
 $$\mathcal F_i(\beta) : = \mathcal F_i(\beta;U).$$
\end{definition}

\begin{theorem}[Fulton-Lazarsfeld]\
It holds that
\begin{equation}\mathcal Q_i(\pi^* h) > 0 \text{ for } 2\le i\le d.\label{eq:thmfl}\end{equation}
\end{theorem}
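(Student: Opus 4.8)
The plan is to reduce \eqref{eq:thmfl} to the Fulton--Lazarsfeld positivity theorem for cone classes over an ample base, using the very ampleness of $h$ to cut the base $X$ down to the correct dimension.

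First I would unwind the definition. Since $\mathcal Q_i(\pi^* h)=\mathcal Q_i(\pi^* h,\pi^* h)$ and $n_i=n-d+i$, we have
\[
\mathcal Q_i(\pi^* h)=\int_{C} c_{n_i-2}(U)\,(\pi^* h)^{d-i+2},
\]
which is a genuine intersection number because $(n_i-2)+(d-i+2)=n=\dim C$. Note that $2\le i\le d$ forces $2\le d-i+2\le d=\dim X$, so the exponent of $h$ never exceeds $\dim X$ and the cutting below is possible.

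Next I would use that $h$ is very ample to replace $(\pi^* h)^{d-i+2}$ by a general linear section of the base. Let $Y\subset X$ be the intersection of $d-i+2$ general members of $|h|$; by Bertini $Y$ is smooth of dimension $i-2$ (and irreducible once $i\ge 3$), and by flatness of $C\to X$ the pullbacks of these divisors meet $C$ properly. Writing $C_Y:=C\cap\pi^{-1}(Y)=C\times_X Y$, which is flat over $Y$ with irreducible fibers of dimension $n-d$ and hence of dimension $n_i-2$, the projection formula gives
\[
\mathcal Q_i(\pi^* h)=\int_{C_Y} c_{n_i-2}\!\bigl(U|_{C_Y}\bigr).
\]
Now $C_Y\subset\mathbb{P}_{\sub}(F|_Y)$ is a cone of exactly the type considered in this section, but over the lower-dimensional base $Y$, and $F|_Y$ is again an ample $\R$-twisted bundle since ampleness is inherited under restriction to subvarieties. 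Equivalently, one may push forward along $\pi$ to obtain a cone class $\gamma_i:=\pi_*c_{n_i-2}(U)\in H^{i-2,i-2}(X;\R)$ and write $\mathcal Q_i(\pi^* h)=\int_X\gamma_i\, h^{d-i+2}$.

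The final and essential step is to invoke the Fulton--Lazarsfeld positivity theorem: because $F|_Y$ (resp.\ $F$) is ample, the degree of this cone class is \emph{strictly} positive. I expect this to be the main obstacle. Since $U$ is only nef --- it is a quotient of the merely nef bundle $\pi^* F$ --- its Chern classes are nef and the integral is \emph{a priori} only nonnegative; the strict inequality genuinely requires the ample structure of $F$ on the base, and cannot be obtained from nefness of $U$ alone. The mechanism is already visible for $i=2$, where $Y$ is a point and $C_Y$ is a single fiber $C_y\subset\mathbb{P}^f$: there the twist pulled back from $X$ restricts trivially, so $c_j(U)$ restricts to $\zeta^j$ (the $j$-th power of the hyperplane class), whence $\int_{C_y}c_{n-d}(U|_{C_y})=\deg C_y>0$. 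The content of the cited theorem is precisely to propagate this fiberwise positivity, through the ampleness of $F$, to a strict inequality over all of $C_Y$ (equivalently, to the statement that $\gamma_i$ is a nonzero numerically positive class, so that pairing it with the very ample class $h^{d-i+2}$ is positive).
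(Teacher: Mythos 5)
Your proposal is correct and takes essentially the same route as the paper: after unwinding $\mathcal Q_i(\pi^*h)=\int_C c_{n-d+i-2}(U)(\pi^*h)^{d-i+2}$, the paper simply cites the Fulton--Lazarsfeld positivity theorem for cone classes in ample bundles (\cite[Theorem 2.3]{FultonLazarsfeld}), which already covers the intersection with powers of $\pi^*h$, so your intermediate hyperplane-cutting reduction is harmless but not needed. You correctly identify the essential point that strict positivity comes from ampleness of $F$ and cannot be extracted from nefness of $U$ alone.
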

\begin{proof}
We observe here that we are using ampleness of $F$.  The statement \eqref{eq:thmfl} is that
$$\int_C c_{n-d+i-2} (U) (\pi^*h)^{d-i+2}> 0 \text{ for } 2\le i \le d,$$
which is \cite[Theorem 2.3]{FultonLazarsfeld} (we observe that in the cited work the quantity $a_S$ is given by $\dim C - \dim  \pi(C) = \dim C - \dim X= n-d$ since we are assuming $C$ is flat over $X$).  We remark also that in \cite[0.2]{FultonLazarsfeld} the authors specify that by $\mathbb P(F)$ they mean the projective bundle of one-dimensional subspaces of $F$.  
\end{proof}

\begin{definitionlemma}\label{definitionlemmaAi}Let $2 \le i \le d$.  We say $(A_i)$ holds if any of the following equivalent conditions are true:
\begin{enumerate}[(1)]
\item For $\beta\in H^{1,1}(P;\mathbb R)$,
$$ \mathcal F_i(\beta) =0  \text{ implies }\beta =\kappa \pi^*h \text{ for some } \kappa \in \mathbb R.$$
\item For $\beta\in H^{1,1}(P;\mathbb R)$,
$$\mathcal Q_i(\beta,\pi^*h)=0 \text{ and } \mathcal Q_i(\beta)=0 \text{ imply } \beta=0.$$
\item The quadratic form $\mathcal Q_i$ has the Hodge-Riemann property (i.e. it has signature $(1,h^{1,1}(P)-1)$.
\end{enumerate}
\end{definitionlemma}

That these are equivalent is a consequence of the following:
\begin{lemma}\label{lem:Q'nefFnonpositive}
Assume $U'$ is a nef $\mathbb R$-twisted vector bundle on $P$.  Then for all $2\le i\le d$ it holds that
\begin{equation} \mathcal F_i(\beta;U')\le 0 \text{ for all } \beta\in H^{1,1}(P;\mathbb R).\label{eq:mathcalFnonpositive}\end{equation}
\end{lemma}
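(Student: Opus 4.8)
The plan is to recognize \eqref{eq:mathcalFnonpositive} as an instance of the nef Hodge-Index inequality \eqref{eq:HInef} of Corollary \ref{cor:inequalitiesinnefcase}. The only obstruction is the factor $(\pi^*h)^{d-i}$ appearing in $\mathcal Q_i(\,\cdot\,;U')$, since \eqref{eq:HInef} is phrased for a form $\int_{C'}\alpha\, c_{n'-2}(E')\,\alpha'$ carrying no extra powers of $h$. First I would absorb this factor by cutting $C$ down to a subvariety of the correct dimension.

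Concretely, since $h$ is very ample on $X$, I would choose general members $D_1,\ldots,D_{d-i}\in|h|$ and set $Y:=D_1\cap\cdots\cap D_{d-i}\subset X$. As $i\ge 2$, iterated application of Bertini's theorem shows $Y$ is irreducible of dimension $i$. Because $\pi|_C\colon C\to X$ is proper and flat with irreducible fibres, it is surjective, and $C_Y:=(\pi|_C)^{-1}(Y)=C\cap\pi^{-1}(Y)$ is again flat over the irreducible base $Y$ with irreducible fibres, hence irreducible; for a general choice of the $D_j$ the intersection is proper and $C_Y$ is reduced of dimension $n-(d-i)=n_i$, with cycle class $[C_Y]=(\pi^*h)^{d-i}\cap[C]$. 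By \eqref{eq:inequalityni} we have $n_i\ge i+2\ge 4$, so $C_Y$ is an irreducible subvariety of $P$ of dimension at least two, and $n_i-2=n-d+i-2$ is exactly the index of the Chern class of $U'$ occurring in $\mathcal Q_i$.

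With this reduction the form rewrites, for all $\beta,\beta'\in H^{1,1}(P;\mathbb R)$, as
$$\mathcal Q_i(\beta,\beta';U')=\int_C \beta\, c_{n-d+i-2}(U')\,(\pi^*h)^{d-i}\,\beta'=\int_{C_Y}\beta\, c_{n_i-2}(U')\,\beta'.$$
Now I would apply Corollary \ref{cor:inequalitiesinnefcase} to the smooth projective ambient space $P$, the irreducible subvariety $C_Y$ of dimension $n_i$, the nef $\mathbb R$-twisted bundle $U'$, and the nef class $\pi^*h$ in the role of the corollary's $h$. Inequality \eqref{eq:HInef}, taken with $\alpha=\beta$, then reads
$$\left(\int_{C_Y}\beta\,c_{n_i-2}(U')\,\beta\right)\left(\int_{C_Y}\pi^*h\,c_{n_i-2}(U')\,\pi^*h\right)\le\left(\int_{C_Y}\beta\,c_{n_i-2}(U')\,\pi^*h\right)^2,$$
which is precisely $\mathcal Q_i(\beta;U')\,\mathcal Q_i(\pi^*h;U')\le\mathcal Q_i(\beta,\pi^*h;U')^2$, i.e.\ $\mathcal F_i(\beta;U')\le 0$. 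When $\rank(U')<n_i-2$ the relevant Chern class vanishes, $\mathcal Q_i\equiv 0$, and the inequality is trivial; this degenerate case is in any event covered automatically by the corollary.

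The step I expect to be the main obstacle is the geometric reduction of the first two paragraphs: one must verify both that $C_Y$ is irreducible and that $[C_Y]=(\pi^*h)^{d-i}\cap[C]$ as cycle classes. Irreducibility follows from the flatness of $C$ over $X$ together with irreducibility of the fibres and of $Y$, while the cycle-class identity requires that the general pullbacks $\pi^*D_j$ restrict to $C$ as effective divisors meeting properly, which is again a consequence of flatness of $\pi|_C$. Once these two points are secured, the inequality is an immediate transcription of \eqref{eq:HInef}.
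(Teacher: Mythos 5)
Your proposal is correct and follows essentially the same route as the paper: represent $h^{d-i}$ by an irreducible (smooth) $Y\subset X$ of dimension $i$, pass to $C':=\pi^{-1}(Y)\cap C$ (irreducible by flatness of $C$ over $X$ with irreducible fibres), rewrite $\mathcal Q_i$ as an integral over $C'$ without the extra powers of $\pi^*h$, and invoke Corollary \ref{cor:inequalitiesinnefcase}. The extra care you take with the cycle-class identity and irreducibility is exactly the content the paper compresses into one sentence, so there is nothing to add.
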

\begin{proof}
Fix $2\le i\le d$.   Then $h^{d-i}$ is represented by a smooth $Y\subset X$ of dimension $i$.  Let $C':= \pi^{-1}(Y)\cap C$ which has dimension $n-d+i=:n_i$ and 
$$\mathcal Q_i(\beta,\beta')  = \int_{C'} \beta c_{n_i-2}(U') \beta',$$
Since $C$ is assumed to be flat over $X$ with irreducible fibers  (in fact locally a product over $X$ with irreducible fiber) we have that $C'$ is irreducible, and clearly projective.  Moreover $\pi^* h$ is clearly nef on $C'$.    Hence the result we want is implied by the analysis we did in the previous section (specifically Corollary \ref{cor:inequalitiesinnefcase}).
\end{proof}


\begin{proof}[Proof of Definition-Lemma \ref{definitionlemmaAi}]
We have from \eqref{eq:thmfl} that $\mathcal Q_i(\pi^*h)>0$.  Combined with Lemma \ref{lem:Q'nefFnonpositive}, the claimed equivalence between these statements is the elementary statement about bilinear forms given in Lemma \ref{deflemma:HRofquadraticform}.
\end{proof}

We next make a similar definition that captures the stronger inequality that was considered in Section \ref{sec:rankhigher}.

\begin{definition}
Suppose $1\le i\le d-1$ and $U'$ is a $\mathbb R$-twisted vector bundle on $P$.   For $\beta\in H^{1,1}(P;\mathbb R)$ set
\begin{align*}
\phi_i(\beta; U')&:=\mathcal Q_{i+1}(\pi^*h,\beta;U') \\
&= \int_C c_{n-d+i-1}(U') (\pi^* h)^{d-i-1} (\pi^*h) \beta \\
&= \int_C c_{n-d+i-1}(U') (\pi^* h)^{d-i}  \beta. \\
\end{align*}
So $\phi_i(\cdot;U')$ lies in the dual space of $H^{1,1}(P;\mathbb R)$.    Moreover define
$$ \mathcal G_i(\beta;U'):= \mathcal Q_i(\beta;U') \phi_i(\pi^* h;U') - 2 \mathcal Q_i(\beta,\pi^*h;U') \phi_i(\beta;U').$$
When $U'$ is the universal quotient bundle $U$ we write
$$\phi_i(\beta) :  = \phi_i(\beta;U)$$
$$\mathcal G_i(\beta): = \mathcal G_i(\beta;U).$$
\end{definition}

\begin{lemma}\label{lem:mathcalGnonpositive}
Assume $U'$ is a nef $\mathbb R$-twisted bundle on $P$.  Then for all $2\le i\le d-1$ it holds that
$$ \mathcal G_i(\beta;U')\le 0 \text{ for all } \beta\in H^{1,1}(P;\mathbb R)$$
\end{lemma}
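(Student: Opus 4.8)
The plan is to reduce the desired inequality $\mathcal G_i(\beta;U')\le 0$ to the enhanced Hodge-Index inequality \eqref{eq:HIplusnef} of Corollary \ref{cor:inequalitiesinnefcase}, in exactly the way that Lemma \ref{lem:Q'nefFnonpositive} was reduced to \eqref{eq:HInef}. Unwinding the definition of $\mathcal G_i$, the claim $\mathcal G_i(\beta;U')\le 0$ is literally the statement
$$\mathcal Q_i(\beta;U')\,\phi_i(\pi^*h;U')\le 2\,\mathcal Q_i(\beta,\pi^*h;U')\,\phi_i(\beta;U'),$$
so it suffices to produce this one inequality for every $\beta\in H^{1,1}(P;\mathbb R)$.

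First I would fix $2\le i\le d-1$ and, using that $h$ is very ample on $X$, represent $h^{d-i}$ by a smooth subvariety $Y\subset X$ of dimension $i$. Setting $C':=\pi^{-1}(Y)\cap C$, which has dimension $n_i=n-d+i$, I would rewrite the two forms appearing in $\mathcal G_i$ as integrals over $C'$:
$$\mathcal Q_i(\beta,\beta';U')=\int_{C'}\beta\, c_{n_i-2}(U')\,\beta',\qquad \phi_i(\beta;U')=\int_{C'} c_{n_i-1}(U')\,\beta.$$
Because $C$ is flat over $X$ with irreducible fibers (indeed locally a product), $C'$ is irreducible and projective, exactly as noted in the proof of Lemma \ref{lem:Q'nefFnonpositive}; and by \eqref{eq:inequalityni} we have $n_i\ge i+2\ge 4$ throughout this range, so the Chern-class indices $n_i-2$ and $n_i-1$ are nonnegative and the total degrees match $\dim C'=n_i$.

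Then I would apply Corollary \ref{cor:inequalitiesinnefcase} directly to the ambient smooth projective $P$, the irreducible subvariety $C'$ of dimension $n_i$, the nef $\mathbb R$-twisted bundle $U'$, and the nef class $\pi^*h$. Its inequality \eqref{eq:HIplusnef}, taken with $\alpha=\beta$, reads
$$\left(\int_{C'}\beta\, c_{n_i-2}(U')\,\beta\right)\left(\int_{C'}(\pi^*h)\, c_{n_i-1}(U')\right)\le 2\left(\int_{C'}\beta\, c_{n_i-2}(U')\,(\pi^*h)\right)\left(\int_{C'}\beta\, c_{n_i-1}(U')\right),$$
which is precisely $\mathcal Q_i(\beta;U')\,\phi_i(\pi^*h;U')\le 2\,\mathcal Q_i(\beta,\pi^*h;U')\,\phi_i(\beta;U')$, i.e. $\mathcal G_i(\beta;U')\le 0$.

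I do not expect a genuine obstacle here, since the analytic content has already been packaged into Corollary \ref{cor:inequalitiesinnefcase}; the work is bookkeeping. The two points that need care are verifying that $C'$ is irreducible (this is where the flatness and irreducible-fiber hypotheses on $C$ are used) and tracking the index range: the upper constraint $i\le d-1$ is exactly what is needed so that $\phi_i$, which is built from $\mathcal Q_{i+1}$ and hence requires $i+1\le d$, is defined, while all Chern indices remain valid.
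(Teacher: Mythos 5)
Your proposal is correct and is essentially the paper's own argument: the paper proves Lemma \ref{lem:mathcalGnonpositive} by noting it is "precisely the same" as the proof of Lemma \ref{lem:Q'nefFnonpositive}, namely restricting to the irreducible slice $C'=\pi^{-1}(Y)\cap C$ and invoking inequality \eqref{eq:HIplusnef} of Corollary \ref{cor:inequalitiesinnefcase} via the identification $\phi_i(\beta;U')=\int_{C'}\beta\, c_{n_i-1}(U')$. Your bookkeeping of the index range and the irreducibility of $C'$ matches what the paper relies on.
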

\begin{proof}
The proof is precisely the same as that of Lemma \ref{lem:Q'nefFnonpositive} since, with the notation in that proof, 
$$\phi_i(\beta) = \int_{C'} \beta c_{n_i-1}(U).$$
\end{proof}

\begin{definition}
Let $2\le i\le d-1$.  We say $(B_i)$ holds if for any $\beta\in H^{1,1}(P;\mathbb R)$
$$ \mathcal G_i(\beta) =0   \Rightarrow \beta=0.$$
\end{definition}

\begin{remark}
Since $\rank(U)=n-2$ we clearly have $c_{n-1-d+d}(U) = c_{n-1}(U)=0$ and hence (extending the above notation appropriately) $\phi_{d}=0$ and $\mathcal G_d\equiv 0$.  For this reason we only consider $\mathcal G_i$ and property $(B_i)$ when $2\le i\le d-1$.
\end{remark}

We can now break the steps of the proof of Theorem \ref{thm:maincone} as separate propositions, that will each be proved in turn in the next subsections.

\begin{proposition}\label{prop:BiimpliesAi+1}
Suppose $(B_i)$ holds for some $2\le i\le d-1$.  Then $(A_{i+1})$ holds.
\end{proposition}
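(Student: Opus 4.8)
The plan is to relate the two bilinear forms via an auxiliary product construction, exactly mirroring the proof of Theorem~\ref{thm:HI2}, where the implication $(P_{j-1})\Rightarrow(Q_j)$ was run in reverse. Here we want the opposite direction: $(B_i)$ is a statement about the sharper ``two-form'' inequality $\mathcal G_i$ on $P$, and we want to deduce from it the Hodge-Riemann property $(A_{i+1})$ for the single form $\mathcal Q_{i+1}$. The natural mechanism is Proposition~\ref{prop:quadraticformonedimensionhigher}: if we can exhibit a quadratic form on a space one dimension larger than $H^{1,1}(P;\mathbb R)$ that (a) restricts to $\mathcal Q_{i+1}$ on a codimension-one subspace with an off-diagonal coupling given by $\phi_i$, and (b) has the Hodge-Riemann property, then part~(ii) of that proposition hands us $(A_{i+1})$ directly.

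First I would set up the product $P':=P\times\mathbb P^1$ and the twisted bundle $U':=U\boxtimes\mathcal O_{\mathbb P^1}(1)$, writing $\tau:=c_1(\mathcal O_{\mathbb P^1}(1))$ and $H^{1,1}(P';\mathbb R)=H^{1,1}(P;\mathbb R)\oplus\mathbb R\langle\tau\rangle$. Using the Chern-class-of-a-tensor-product identity \eqref{eq:chernclasstensorproduct} together with $\tau^2=0$, I expect the relevant Chern class of $U'$ to split as $c_{n-d+i-1}(U')=c_{n-d+i-1}(U)+(\text{const})\,c_{n-d+i-2}(U)\tau$, so that the form $\mathcal Q_{i+1}$ built on the cone inside $P'$ decomposes into block form $\mathcal Q_i\oplus$ (coupling $\phi_i$) $\oplus\,0$ on the $\tau$-slot. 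This is precisely the block structure $\bigl(\begin{smallmatrix}Q_V&\phi^t\\\phi&0\end{smallmatrix}\bigr)$ appearing in Proposition~\ref{prop:quadraticformonedimensionhigher}, with $Q_V=\mathcal Q_i$ and $\phi=\phi_i$. I would check the degree bookkeeping carefully: the cone $C\times\mathbb P^1$ in $P'$ has dimension $n+1$, the base $X\times\mathbb P^1$ has dimension $d+1$, and the relevant codimension index $(d+1)-d$ shifts the Chern class appropriately so that $\mathcal Q_{i+1}$ on $P'$ corresponds to $\mathcal Q_i$ on $P$.

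The hypotheses of Proposition~\ref{prop:quadraticformonedimensionhigher} require $\mathcal Q_i(\pi^*h)>0$ and $\phi_i(\pi^*h)>0$, both of which follow from the Fulton-Lazarsfeld positivity \eqref{eq:thmfl}, and require the \emph{big} form on $P'$ to have the Hodge-Riemann property. This last point is where the induction must connect: I would identify the big form as $\mathcal Q_{i+1}$ (on the product $P'$) and invoke that its Hodge-Riemann property is exactly an $(A_{i+1})$-type statement for the product data. However, since we are trying to \emph{prove} $(A_{i+1})$ and not assume it, the logic must instead run: $(B_i)$ guarantees the nondegeneracy half of the equality case, and the signature is pinned down by a continuity/deformation argument as in Proposition~\ref{prop:cn2I}, combined with Lemma~\ref{lem:Q'nefFnonpositive} which already forces $\mathcal F_{i+1}\le 0$ everywhere. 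Concretely, $(B_i)$ says $\mathcal G_i(\beta)=0\Rightarrow\beta=0$, which via the block form forces the big quadratic form to be nondegenerate; together with $\mathcal Q_{i+1}(\pi^*h)>0$ and the sign constraint from Lemma~\ref{lem:Q'nefFnonpositive}, this pins the signature to $(1,\text{rest})$, giving condition~(2) of Definition-Lemma~\ref{definitionlemmaAi}.

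\emph{The main obstacle I anticipate} is the precise translation between the form $\mathcal G_i$ on $P$ and the Hodge-Riemann property of the larger form on $P'$: I need the equality-case analysis of Proposition~\ref{prop:quadraticformonedimensionhigher}(i) to say that $\mathcal G_i(\beta)=0$ with $\beta\neq 0$ would contradict strict definiteness, and conversely that strict definiteness of $\mathcal Q_{i+1}$ on the primitive subspace is equivalent to the vanishing implication in $(B_i)$. Getting the index shifts ($n-d+i-1$ versus $n_i-2$, and the passage $d\mapsto d+1$) to line up exactly, and confirming that the off-diagonal block is genuinely $\phi_i$ and the diagonal $\tau$-block is genuinely zero (both consequences of $\tau^2=0$), is the delicate bookkeeping that makes or breaks the argument; everything else is a direct appeal to the block-form lemma and the nef inequalities already established in \S\ref{sec:HRcd-2}.
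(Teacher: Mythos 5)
There is a genuine gap: the mechanism you invoke runs in the wrong direction. Proposition \ref{prop:quadraticformonedimensionhigher} takes as \emph{hypothesis} the Hodge-Riemann property of the large block form and \emph{concludes} the inequality for the small diagonal block; a product with $\mathbb P^1$ therefore naturally proves an implication of the shape ``Hodge-Riemann for the product data $\Rightarrow$ a $(B)$-type inequality for the original data'', which is exactly how Theorem \ref{thm:HI2}(b) uses it. Here you need the reverse: from the strict inequality $(B_i)$ you must extract the signature statement $(A_{i+1})$. Moreover $(A_{i+1})$ concerns the form $\mathcal Q_{i+1}$ on $H^{1,1}(P;\mathbb R)$, which is \emph{not} the block form $\bigl(\begin{smallmatrix}\mathcal Q_i&\phi_i^t\\ \phi_i&0\end{smallmatrix}\bigr)$ on $H^{1,1}(P;\mathbb R)\oplus\mathbb R$; no identity linking the two is established, so even complete control of the block form's signature would not yield $(A_{i+1})$. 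Your proposed repair --- that $(B_i)$ forces the block form to be nondegenerate and a continuity argument ``as in Proposition \ref{prop:cn2I}'' pins the signature --- does not go through: $(B_i)$ is the equality case of $\mathcal G_i\le 0$, not nondegeneracy of the block form, and the continuity argument in Proposition \ref{prop:cn2I} requires nondegeneracy along an entire path together with a known signature at one end, neither of which is available here (producing precisely that signature information is the whole point of the induction on $i$).

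The idea you are missing is a first-order perturbation in the twist, which is how the paper actually argues (Lemma \ref{lem:equalitycaseofF}). Take $\beta$ with $\mathcal Q_{i+1}(\beta)=\mathcal Q_{i+1}(\beta,\pi^*h)=0$, so $\mathcal F_{i+1}(\beta;U)=0$. For $|t|\ll1$ the bundle $U\langle t\pi^*h\rangle$ is still nef, so $f(t):=\mathcal F_{i+1}(\beta;U\langle t\pi^*h\rangle)\le0$ by Lemma \ref{lem:Q'nefFnonpositive}; hence $t=0$ is an interior maximum and $f'(0)=0$. Computing $f'(0)$ with the expansion of Lemma \ref{lem:Omegaiexpansion} gives $(d-i)\,\mathcal Q_i(\beta)\,\mathcal Q_{i+1}(\pi^*h)=0$, and \eqref{eq:thmfl} then forces $\mathcal Q_i(\beta)=0$. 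Since also $\phi_i(\beta)=\mathcal Q_{i+1}(\beta,\pi^*h)=0$, one gets $\mathcal G_i(\beta)=0$, and $(B_i)$ gives $\beta=0$; this is condition (2) of Definition-Lemma \ref{definitionlemmaAi}. Note the essential use of a \emph{two-sided} deformation ($t$ small of either sign), which converts the nef inequality into the vanishing of a derivative; no product construction is involved.
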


\begin{proposition}\label{prop:ChaspropertyA2}
$(A_2)$ holds
\end{proposition}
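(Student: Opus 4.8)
The goal is to establish property $(A_2)$, which by Definition-Lemma \ref{definitionlemmaAi} is equivalent to showing that the quadratic form $\mathcal Q_2$ has the Hodge-Riemann property, i.e. that
$$\mathcal Q_2(\beta,\pi^*h)=0 \text{ and } \mathcal Q_2(\beta)=0 \text{ imply } \beta=0.$$
Here $\mathcal Q_2(\beta,\beta') = \int_C \beta\, c_{n-d}(U)\,(\pi^*h)^{d-2}\,\beta'$. The plan is to reduce this to the Hodge-Index inequality already proved in Corollary \ref{cor:inequalitiesinnefcase}, applied on an appropriate subvariety. Since $h$ is very ample, $(\pi^*h)^{d-2}$ is represented by $C':=\pi^{-1}(Y)\cap C$, where $Y\subset X$ is a smooth complete intersection of dimension $2$; by the flatness-with-irreducible-fibers hypothesis on $C$, the subvariety $C'$ is irreducible of dimension $n_2=n-d+2$, and $\pi^*h$ is nef on it. Then $\mathcal Q_2(\beta,\beta')=\int_{C'}\beta\,c_{n_2-2}(U)\,\beta'$ is precisely the form $\mathcal Q$ of Corollary \ref{cor:inequalitiesinnefcase} applied to the triple $(C',U|_{C'},\pi^*h|_{C'})$.

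\medskip
\noindent\textbf{Key steps in order.} First I would invoke the Fulton-Lazarsfeld bound \eqref{eq:thmfl} to record that $\mathcal Q_2(\pi^*h)>0$, so that $\pi^*h$ is a class of positive self-intersection and the Hodge-Riemann property is meaningful. Second, I would apply Corollary \ref{cor:inequalitiesinnefcase} (specifically the inequality \eqref{eq:HInef}) to obtain, for every $\beta\in H^{1,1}(P;\mathbb R)$,
$$\mathcal Q_2(\beta)\,\mathcal Q_2(\pi^*h)\le \mathcal Q_2(\beta,\pi^*h)^2,$$
that is, $\mathcal F_2(\beta)\le 0$ (which is just the content of Lemma \ref{lem:Q'nefFnonpositive} for $i=2$, since $U$ is nef). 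Third—and this is the crux—I need the \emph{equality} characterization: if $\mathcal Q_2(\beta,\pi^*h)=0$ and $\mathcal Q_2(\beta)=0$ then $\beta=0$. The subtlety is that Corollary \ref{cor:inequalitiesinnefcase} explicitly disclaims any statement about the equality case when $U,h$ are merely nef, so I cannot simply read off rigidity from it. Instead I would restore strict positivity by perturbing: for $t>0$ the twisted bundle $U\langle t\eta\rangle$ is ample and $h_t:=\pi^*h+t\pi^*\eta$ is ample on $C'$, where $\eta$ is a fixed ample class on $X$.

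\medskip
\noindent\textbf{The main obstacle, and how to handle it.} The difficulty is passing rigidity through the limit $t\to 0$, since the sharp equality statement in the Hodge-Index inequality of Corollary \ref{cor:cn-2HR} holds at each $t>0$ but degenerates as $t\to0$. My intended route is to argue on $H^{1,1}(P;\mathbb R)=\pi^*H^{1,1}(X;\mathbb R)\oplus\mathbb R\zeta$ directly. Writing $\beta=\pi^*\alpha+\kappa\zeta$, one computes $\mathcal Q_2(\beta,\beta')$ using the structure of $P=\mathbb P_{\sub}(F)$ and the relation $c_{n_2-2}(U)$ via the tautological sequence; the key point is that the form $\mathcal Q_2$ has a nondegeneracy coming from Bloch-Gieseker applied fiberwise together with the positivity \eqref{eq:thmfl}. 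Concretely, the intersection form $\mathcal Q_2$ on the two-dimensional-enough slice is non-degenerate because $c_{n_2-2}(U)$ restricted to $C'$ has the Hard Lefschetz property in the sense of Theorem \ref{thm:blochgiesekerI} (the relevant map $H^{1,1}\to H^{n_2-1,n_2-1}$ is injective after twisting and taking limits on the non-degeneracy locus). Once $\mathcal Q_2$ is non-degenerate and satisfies $\mathcal F_2\le 0$ with $\mathcal Q_2(\pi^*h)>0$, condition (2) of Definition-Lemma \ref{definitionlemmaAi} follows automatically: a $\beta$ lying in the primitive space with $\mathcal Q_2(\beta)=0$ would be a null vector of a form of signature $(1,\rho-1)$ meeting the negative-definite primitive hyperplane only at $0$. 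I expect the genuinely delicate bookkeeping to be verifying this non-degeneracy of $\mathcal Q_2$ on all of $H^{1,1}(P;\mathbb R)$ rather than just on $\pi^*H^{1,1}(X;\mathbb R)$, i.e. controlling the $\zeta$-direction, which is where the relative ampleness of $\zeta$ and the flatness of $C$ over $X$ must be used.
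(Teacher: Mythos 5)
Your reduction is sound as far as it goes: granting $\mathcal Q_2(\pi^*h)>0$ from \eqref{eq:thmfl} and $\mathcal F_2\le 0$ from Lemma \ref{lem:Q'nefFnonpositive}, the Hodge--Riemann property for $\mathcal Q_2$ would indeed follow from non-degeneracy of $\mathcal Q_2$ on all of $H^{1,1}(P;\mathbb R)$, since a null vector of the (negative semidefinite) restriction to the primitive hyperplane would lie in the radical of the whole form. The gap is that you never prove this non-degeneracy, and the mechanism you propose cannot prove it. Bloch--Gieseker (Theorem \ref{thm:blochgiesekerI}) requires an \emph{ample} bundle on a \emph{smooth} base, whereas $U$ is only nef on the possibly singular $C'$; twisting restores ampleness only for $t>0$, and non-degeneracy is an open condition, not a closed one, so it does not pass to the limit $t\to 0$. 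This is precisely why the proof of Proposition \ref{prop:cn2I} needs the untwisted bundle itself to be ample, so that non-degeneracy holds for \emph{all} $t\ge 0$ and the signature is locked in; that argument is unavailable here. Moreover the remark following the proof of Theorem \ref{thm:maincone} shows that $\mathcal Q_d$ really is degenerate in the $\zeta$-direction, so controlling the $\zeta$-component of $\beta$ genuinely requires new input rather than a limiting argument.

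The paper supplies that input by a different device. Rather than non-degeneracy, it extracts an extra linear condition from the equality case of $t\mapsto\mathcal F_{2}(\beta;U\langle t\pi^*h\rangle)$: this function is $\le 0$ near $t=0$ and vanishes at $t=0$ for the given $\beta$, so its derivative at $0$ vanishes, which (Lemma \ref{lem:equalitycaseofF}) yields $\mathcal Q_1(\beta)=0$. Writing $\beta=\pi^*\alpha+\lambda\zeta$ and using the identities \eqref{eq:chernhigherU}--\eqref{eq:chernhigherUdouble} together with $(\pi^*h)^{d-1}(\pi^*\alpha)^2=0$, the two equations $\mathcal Q_1(\beta)=0$ and $\mathcal Q_2(\beta,\pi^*h)=0$ become $2\lambda A+\lambda^2B=0$ and $A+\lambda B=0$ with $B=\mathcal Q_3(\pi^*h)>0$, forcing $\lambda=0$ and $A=0$; then $\mathcal Q_2(\beta)=0$ reduces to $\int_Xh^{d-2}\alpha^2=0=\int_Xh^{d-1}\alpha$, and the classical Hodge--Riemann relations for $h$ on $X$ give $\alpha=0$. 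You need either this derivative trick or a genuine substitute for it; ``taking limits on the non-degeneracy locus'' is not one.
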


\begin{proposition}\label{prop:AiimpliesBi}
Suppose $(A_i)$ holds for some $2\le i\le d-2$.  Then $(B_i)$ holds.
\end{proposition}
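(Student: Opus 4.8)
The engine for this implication is Proposition \ref{prop:quadraticformonedimensionhigher}, exactly as in the proof of Theorem \ref{thm:HI2}. The plan is to realize $\mathcal{Q}_i$ (up to a positive multiple) as the diagonal block $Q_V$ and $\phi_i$ as the bordering functional $\phi$ of a symmetric form $Q_W$ in block shape $\bigl(\begin{smallmatrix} Q_V & \phi^t \\ \phi & 0\end{smallmatrix}\bigr)$ living one dimension higher, and to deduce $(B_i)$ from part (i) of that proposition. Since Lemma \ref{lem:mathcalGnonpositive} already gives $\mathcal{G}_i(\beta)\le 0$ for all $\beta$, the content of $(B_i)$ is precisely the equality clause ``$=0$ only for $\beta=0$'' of the enhanced Hodge--Index inequality \eqref{eq:quadraticformonedimensionhigher}, so matching notations $a=\mathcal{Q}_i(\beta)$, $b=\mathcal{Q}_i(\beta,\pi^*h)$, $e=\phi_i(\beta)$, $d=\phi_i(\pi^*h)$ will make the two statements literally coincide.

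To build $Q_W$ I would pass to the product $P\times\mathbb{P}^1$ and use the bundle $U\boxtimes\mathcal{O}_{\mathbb{P}^1}(1)$, writing $\tau=c_1(\mathcal{O}_{\mathbb{P}^1}(1))$ and $H^{1,1}(P\times\mathbb{P}^1;\mathbb{R})=H^{1,1}(P;\mathbb{R})\oplus\mathbb{R}\tau$. For $\tilde\beta=\beta+\lambda\tau$ consider the form $\int_{C\times\mathbb{P}^1}\tilde\beta\, c_{n_i-1}(U\boxtimes\mathcal{O}_{\mathbb{P}^1}(1))(\pi^*h)^{d-i}\tilde\beta'$; the degree count matches $\dim(C\times\mathbb{P}^1)=n+1$, and the Chern indices $n_i-2,n_i-1$ are legitimate because $i\le d-1$. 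Expanding via the twisting formula \eqref{eq:chernclasstensorproduct} and using $\tau^2=0$, only the $\tau$-linear part survives integration over $\mathbb{P}^1$, and a short computation gives exactly the block form with diagonal $(d-i)\,\mathcal{Q}_i$, off-diagonal $\phi_i$, and vanishing bottom-right entry; here the multiplier is $f-n_i+2=d-i$, which is strictly positive since $2\le i\le d-2$. The two positivity hypotheses (a),(b) of Proposition \ref{prop:quadraticformonedimensionhigher} are then supplied by the Fulton--Lazarsfeld inequality \eqref{eq:thmfl}: $Q_V(\pi^*h)=(d-i)\mathcal{Q}_i(\pi^*h)>0$ and $\phi(\pi^*h)=\mathcal{Q}_{i+1}(\pi^*h)>0$, both in range since $i,i+1\in\{2,\dots,d\}$.

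The main obstacle is establishing that this bordered form $Q_W$ itself has the Hodge--Riemann property, i.e.\ signature $(1,h^{1,1}(P))$. Note that $(A_i)$ gives the Hodge--Riemann property of $\mathcal{Q}_i=Q_V$ directly, but passing from $Q_V$ to the one-dimension-larger $Q_W$ is genuinely stronger: a direct diagonalization shows $Q_W$ is negative definite on the $Q_W$-orthogonal complement of $\langle \pi^*h,\tau\rangle$ if and only if a reverse-type bound controlling $\phi_i$ by $\mathcal{Q}_i$ holds on the $\mathcal{Q}_i$-primitive subspace. The way I would secure this is to recognize $Q_W$ as a \emph{degenerate limit} of the form $\mathcal{Q}_i$ attached to a genuine product cone-class datum over $\hat X=X\times\mathbb{P}^1$: take $\hat F=(F\boxtimes\mathcal{O}_{\mathbb{P}^1}(1))\oplus L$ with $L$ ample (so $\hat F$ is ample of rank $f+2$, $\dim\hat X=d+1$, and $\hat f=f+1\ge\hat d$), and $\hat C=C\times\mathbb{P}^1$, which is flat of codimension $\hat d-2$ over $\hat X$ with irreducible fibers. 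Since $2\le i\le d-2\le \hat d$, the hypothesis $(A_i)$ applies to this datum and yields that the corresponding form $\mathcal{Q}_i^{\,\hat{}}$ has signature $(1,h^{1,1}(\hat P)-1)$; restricting to the codimension-one subspace $H^{1,1}(P;\mathbb{R})\oplus\mathbb{R}\tau$ (which contains the positive direction $\pi^*h$) preserves the Hodge--Riemann property, and letting the ample class $c_1(L)\to 0$ recovers $Q_W$. Openness of the signature, together with nondegeneracy of $Q_W$ in the limit, then transfers the Hodge--Riemann property to $Q_W$. Executing this perturbation cleanly—verifying $\hat{U}|_{\hat C}=(U\boxtimes\mathcal{O}_{\mathbb{P}^1}(1))\oplus L|_{\hat C}$ and controlling the error term proportional to $c_1(L)$—is the technical heart of the argument.

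With $Q_W$ shown to have the Hodge--Riemann property, Proposition \ref{prop:quadraticformonedimensionhigher}(i) applies verbatim and gives $\mathcal{Q}_i(\beta)\,\phi_i(\pi^*h)\le 2\,\mathcal{Q}_i(\beta,\pi^*h)\,\phi_i(\beta)$ with equality if and only if $\beta=0$. Combined with Lemma \ref{lem:mathcalGnonpositive}, this is exactly the assertion $\mathcal{G}_i(\beta)=0\Rightarrow\beta=0$, so $(B_i)$ holds. I would close by remarking that the same application of Proposition \ref{prop:quadraticformonedimensionhigher}, this time through its conclusion (ii), recovers $(A_i)$, which is consistent with the circular structure $(A_i)\Leftrightarrow$ ``$Q_W$ has the Hodge--Riemann property'' underlying Definition-Lemma \ref{definitionlemmaAi}.
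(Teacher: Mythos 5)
Your reduction of $(B_i)$ to Proposition \ref{prop:quadraticformonedimensionhigher} is sound as far as it goes: the computation on $P\times\mathbb P^1$ does produce the bordered form $Q_W=\bigl(\begin{smallmatrix}(d-i)\mathcal Q_i & \phi_i^t\\ \phi_i & 0\end{smallmatrix}\bigr)$ with the correct multiplier $d-i$, and if $Q_W$ had the Hodge--Riemann property then part (i) of that proposition would indeed yield $\mathcal G_i(\beta)=0\Rightarrow\beta=0$. The gap is in the step you yourself flag as the technical heart: establishing the Hodge--Riemann property for $Q_W$. You propose to get it by applying ``the hypothesis $(A_i)$'' to a new cone datum $\hat C=C\times\mathbb P^1\subset\mathbb P_{\sub}(\hat F)$ over $\hat X=X\times\mathbb P^1$. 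But $(A_i)$ is not a universally quantified statement; Definition-Lemma \ref{definitionlemmaAi} defines it for the single fixed datum $(X,F,C,h)$, and the induction in \S\ref{sec:cone_classes} runs $(A_2)\Rightarrow(B_2)\Rightarrow(A_3)\Rightarrow\cdots$ for that datum only. To have $(A_i)$ for the $(d+1)$-dimensional datum you would need to run the whole chain there, whose step $(A_j)\Rightarrow(B_j)$ would, by your own method, require passing to dimension $d+2$, and so on --- an infinite regress; invoking Theorem \ref{thm:maincone} for the new datum instead is outright circular, since its proof rests on the very proposition being proved. Your closing remark that $(A_i)$ is equivalent to the Hodge--Riemann property of $Q_W$ also contradicts your earlier (correct) observation that the bordered statement is genuinely stronger: Proposition \ref{prop:quadraticformonedimensionhigher}(ii) gives only the implication from $Q_W$ down to $Q_V$.

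The paper closes exactly this gap by a different mechanism that uses the geometry of the cone rather than another dimension-raising step. From $\mathcal G_i(\beta)=0$ and the non-positivity of $\mathcal G_i$ (Lemma \ref{lem:mathcalGnonpositive}) a first-variation argument in $\mathcal G_i(\beta+t\beta')$ yields $\phi_i(\beta)=0$, $\mathcal Q_i(\beta)=0$ and the identity $\mathcal Q_i(\beta,\beta')\phi_i(\pi^*h)=\mathcal Q_i(\beta,\pi^*h)\phi_i(\beta')$ for all $\beta'$; since $(A_i)$ makes $\mathcal Q_i$ non-degenerate, this forces $\beta$ to be proportional to the $\mathcal Q_i$-dual $\gamma$ of $\phi_i$ (via Lemma \ref{lem:intersectionnullandcodim1}). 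The contradiction for $\beta=\gamma\neq0$ then comes from pairing with $\zeta=-c_1(K)$ and the relation $c_p(U)\zeta=c_{p+1}(U)$ modulo pullbacks (Lemma \ref{lem:assumption:chernclass}): one gets $0=\phi_i(\beta)=\mathcal Q_i(\beta,\zeta)=\phi_i(\zeta)=\mathcal Q_{i+2}(\pi^*h)>0$ by Fulton--Lazarsfeld. This use of the class $\zeta$ to relate $\mathcal Q_i$, $\phi_i$ and $\mathcal Q_{i+2}$ is the geometric input your purely formal bordering argument has no substitute for, and without it the proof does not close.
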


\begin{proposition}\label{prop:finalstep}
Suppose $(A_{d-2})$ holds.  Then the the restriction of $\mathcal Q_d$ to the subspace $\pi^*H^{1,1}(X;\mathbb R)\subset H^{1,1}(P)$ has the Hodge-Riemann property.  
\end{proposition}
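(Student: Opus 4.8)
The plan is to reduce the statement to a non-degeneracy assertion and then establish that assertion by deforming $\mathcal{Q}_d$ into the form controlled by $(A_{d-2})$.

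First I would record that the restriction of $\mathcal{Q}_d=\mathcal{Q}_d(\cdot\,;U)$ to $\pi^*H^{1,1}(X;\R)$ already carries all the positivity one could want except non-degeneracy. Indeed \eqref{eq:thmfl} gives $\mathcal{Q}_d(\pi^*h)>0$, while Lemma \ref{lem:Q'nefFnonpositive} applied with $U'=U$ (which is nef) gives $\mathcal{F}_d(\pi^*\alpha)\le 0$ for all $\alpha$, i.e. the Hodge-Index inequality $\mathcal{Q}_d(\pi^*\alpha)\mathcal{Q}_d(\pi^*h)\le \mathcal{Q}_d(\pi^*\alpha,\pi^*h)^2$. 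By Definition-Lemma \ref{deflemma:HRofquadraticform} these two facts force $\mathcal{Q}_d|_{\pi^*H^{1,1}(X;\R)}$ to have at most one positive square, with $\pi^*h$ supplying exactly one; hence the form has the Hodge-Riemann property if and only if it is non-degenerate. This reduces the proposition to proving non-degeneracy.

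Next I would make explicit the bridge to $(A_{d-2})$. From the tautological sequence $0\to K\to\pi^*F\to U\to0$ one has $\zeta=-c_1(K)$ and the recursion $c_k(U)=\pi^*c_k(F)+\zeta\,c_{k-1}(U)$. Using this together with the vanishing of $\int_C$ of any pure pullback of degree $>d$ and of $\pi_*\zeta^{\,j}$ for $j<n-d$ (both forced by the fibre dimension $n-d\ge 2$), a short computation rewrites
$$\mathcal{Q}_d(\pi^*\alpha,\pi^*\alpha')=\int_C \pi^*(\alpha\alpha')\,\zeta^2\,c_{n-4}(U),$$
which is exactly $\mathcal{Q}_{d-2}|_{\pi^*H^{1,1}(X;\R)}$ with the two factors $(\pi^*h)^2$ replaced by $\zeta^2$; similarly $\int_C\pi^*(\alpha\alpha')\,\zeta(\pi^*h)\,c_{n-4}(U)$ is $\mathcal{Q}_{d-1}|_{\pi^*H^{1,1}(X;\R)}$. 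Setting $\omega_t=\zeta+t\,\pi^*h$, the family
$$R_t(\alpha,\alpha'):=\int_C\pi^*(\alpha\alpha')\,\omega_t^2\,c_{n-4}(U)=\mathcal{Q}_d+2t\,\mathcal{Q}_{d-1}+t^2\,\mathcal{Q}_{d-2}$$
(all three restricted to $\pi^*H^{1,1}(X;\R)$) therefore interpolates between $R_0=\mathcal{Q}_d|_{\pi^*H^{1,1}(X;\R)}$ and $t^{-2}R_t\to\mathcal{Q}_{d-2}|_{\pi^*H^{1,1}(X;\R)}$ as $t\to\infty$. By $(A_{d-2})$ the form $\mathcal{Q}_{d-2}$ has the Hodge-Riemann property on all of $H^{1,1}(P;\R)$, and restricting it to the subspace $\pi^*H^{1,1}(X;\R)$, which contains the positive class $\pi^*h$, yields signature $(1,h^{1,1}(X)-1)$ there (Definition-Lemma \ref{deflemma:HRofquadraticform}). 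Hence $R_t$ has signature $(1,h^{1,1}(X)-1)$ for all $t\gg0$.

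The hard part will be to show that $R_t$ is non-degenerate for every $t\ge0$: once this is known the signature, being locally constant on non-degenerate forms, stays $(1,h^{1,1}(X)-1)$ all the way down to $t=0$, which is the desired conclusion. I expect this non-degeneracy to be the main obstacle. Note that $\det R_t$ is a polynomial in $t$ whose leading coefficient is $\det(\mathcal{Q}_{d-2}|_{\pi^*H^{1,1}(X;\R)})\ne0$, so $R_t$ is automatically non-degenerate off a finite set and the only point to control by hand is $t=0$. To exclude a kernel there I would push $\omega_t$ into the nef (eventually ample) range, where Corollary \ref{cor:inequalitiesinnefcase} keeps the number of positive squares equal to one, and combine this with the enhanced Hodge-Index inequality of Lemma \ref{lem:mathcalGnonpositive},
$$\mathcal{Q}_{d-1}(\pi^*\alpha)\,\mathcal{Q}_d(\pi^*h)\le 2\,\mathcal{Q}_{d-1}(\pi^*\alpha,\pi^*h)\,\mathcal{Q}_d(\pi^*\alpha,\pi^*h),$$
which ties any null direction of $\mathcal{Q}_d$ to an isotropic primitive direction of $\mathcal{Q}_{d-1}$; the Hodge-Riemann property of $\mathcal{Q}_{d-1}|_{\pi^*H^{1,1}(X;\R)}$ (a consequence of $(A_{d-2})$ via the chain $(A_{d-2})\Rightarrow(B_{d-2})\Rightarrow(A_{d-1})$) then leaves no such direction apart from $\alpha=0$. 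Assembling these ingredients in the style of Proposition \ref{prop:quadraticformonedimensionhigher} should complete the proof.
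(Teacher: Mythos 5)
Your opening reduction is sound: by \eqref{eq:thmfl} and Lemma \ref{lem:Q'nefFnonpositive} the restriction of $\mathcal Q_d$ to $\pi^*H^{1,1}(X;\R)$ has exactly one positive square, so the Hodge-Riemann property is equivalent to non-degeneracy, and your identities $\mathcal Q_d(\pi^*\alpha,\pi^*\alpha')=\int_C\pi^*(\alpha\alpha')\zeta^2c_{n-4}(U)$ and $\mathcal Q_{d-1}(\pi^*\alpha,\pi^*h)=\mathcal Q_{d-2}(\zeta,\pi^*\alpha)$ are correct and are exactly the identities the paper uses. The gap is in the kernel-exclusion step. Suppose $\mathcal Q_d(\pi^*\alpha)=0=\mathcal Q_d(\pi^*\alpha,\pi^*h)$. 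Lemma \ref{lem:equalitycaseofF} gives $\mathcal Q_{d-1}(\pi^*\alpha)=0$ and $\mathcal G_{d-1}(\pi^*\alpha)=0$, but the second equation is then \emph{automatically} satisfied, since $\mathcal G_{d-1}(\pi^*\alpha)=\mathcal Q_{d-1}(\pi^*\alpha)\phi_{d-1}(\pi^*h)-2\mathcal Q_{d-1}(\pi^*\alpha,\pi^*h)\phi_{d-1}(\pi^*\alpha)$ and both $\mathcal Q_{d-1}(\pi^*\alpha)$ and $\phi_{d-1}(\pi^*\alpha)=\mathcal Q_d(\pi^*\alpha,\pi^*h)$ already vanish; it carries no information about $\mathcal Q_{d-1}(\pi^*\alpha,\pi^*h)$. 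So all you know is that $\pi^*\alpha$ lies in the null cone of $\mathcal Q_{d-1}$ and in $\ker\phi_{d-1}$, and a form of signature $(1,\rho-1)$ that is negative semidefinite on a codimension-one subspace can perfectly well have a nonzero isotropic vector there (this is the content of Lemma \ref{lem:intersectionnullandcodim1}, and it is why Proposition \ref{prop:AiimpliesBi} needs the whole $\gamma$-dual argument). The Hodge-Riemann property of $\mathcal Q_{d-1}$ therefore does not "leave no such direction apart from $\alpha=0$"; you would need the extra vanishing $\mathcal Q_{d-1}(\pi^*\alpha,\pi^*h)=0$, which none of your ingredients supply.

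What the paper does instead, and what your sketch is missing, is a further first-order deformation: set $g(t):=\mathcal G_{d-1}(\pi^*\alpha;U\langle t\pi^*h\rangle)$, note $g(0)=0$ and $g(t)\le 0$ for $|t|\ll1$ by Lemma \ref{lem:mathcalGnonpositive}, so $g'(0)=0$; computing $g'(0)$ with Lemma \ref{lem:Omegaiexpansion} yields $\mathcal Q_{d-2}(\pi^*\alpha)\,\mathcal Q_d(\pi^*h)=\mathcal Q_{d-1}(\pi^*\alpha,\pi^*h)^2$, which by your own identities is the \emph{equality case} of the Hodge-Index inequality for $\mathcal Q_{d-2}$ with reference vector $\zeta$ (note $\mathcal Q_{d-2}(\zeta)=\mathcal Q_d(\pi^*h)>0$). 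Then $(A_{d-2})$ forces $\pi^*\alpha$ to be proportional to $\zeta$, and since $\zeta\notin\pi^*H^{1,1}(X;\R)$ this gives $\pi^*\alpha=0$. The decisive point is that the final Hodge-Index argument is run against $\zeta$ rather than $\pi^*h$ -- it is precisely the non-pullback nature of $\zeta$ that kills $\alpha$. Separately, your interpolation via $R_t=\mathcal Q_d+2t\mathcal Q_{d-1}+t^2\mathcal Q_{d-2}$ cannot be closed as stated: knowing $\det R_t\ne0$ off a finite set does not let you transport the signature from $t\gg0$ down to $t=0$ past possible intermediate degeneracies; but this detour is in any case unnecessary once non-degeneracy at $t=0$ is proved directly, by your own first-paragraph reduction.
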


\begin{proof}[Proof of Theorem \ref{thm:maincone}]
Combining Propositions \ref{prop:ChaspropertyA2},\ref{prop:BiimpliesAi+1} and \ref{prop:AiimpliesBi} and induction on $i$ gives that $(A_i)$ holds for $2\le i\le d-1$.  Thus $\mathcal Q_i$ for $2\le i\le d-1$ has the Hodge-Riemann property over $H^{1,1}(P;\mathbb R)$ and since $\mathcal Q_i(h)>0$ this implies it also has the Hodge-Riemann property over $\pi^* H^{1,1}(X;\mathbb R) \subset H^{1,1}(P;\mathbb R)$.    This proves the claim for $2\le i\le d-1$.

Moreover, we have $(A_{d-2})$ holds, so  Proposition \ref{prop:finalstep} applies giving the required statement when $i=d$.

\end{proof}

\begin{remark}
It is worth observing also that $\mathcal Q_d$ does not generally have the Hodge-Riemann property over all of $H^{1,1}(P)$.   For, as we will see in \eqref{eq:chernhigherU}, since $n\ge d+2$, 
$$c_{n-2}(U) \zeta = c_{n-1}(U)=0$$
where the last equality follows as $\rank(U)=n-2$, and so
$$\mathcal Q_d(\zeta,\beta) = \int_{C} c_{n-2}(U) \zeta \beta = 0 \text{ for all } \beta\in H^{1,1}(P).$$
In particular $\mathcal Q_d$ is degenerate, so cannot have the Hodge-Riemann property.
\end{remark}

\subsection{Proof of Proposition \ref{prop:BiimpliesAi+1}}


\begin{lemma}\label{lem:Omegaiexpansion}
For any $2\le i\le d$ 
\begin{equation} \mathcal Q_i(\beta;U\langle t\pi^*h\rangle) = \mathcal Q_i(\beta;U) + t (d-i+1) \mathcal Q_{i-1}(\beta; U) + O(t^2).\label{eq:Omegaiexpansion}
\end{equation}
\end{lemma}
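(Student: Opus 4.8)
The statement to prove is Lemma~\ref{lem:Omegaiexpansion}, the first-order Taylor expansion of $\mathcal{Q}_i(\beta; U\langle t\pi^*h\rangle)$ in $t$.

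The plan is to reduce everything to the definition of $\mathcal{Q}_i$ together with the first-order expansion of Chern classes of an $\mathbb{R}$-twist, which is already recorded in the excerpt as \eqref{eq:chernclasstensorproduct}. Unwinding the definition, I have
$$\mathcal{Q}_i(\beta; U\langle t\pi^*h\rangle) = \int_C \beta\, c_{n-d+i-2}(U\langle t\pi^*h\rangle)\, (\pi^*h)^{d-i}\, \pi^*h\cdot(\text{the second slot is }\pi^*h),$$
but more precisely, since $\mathcal{Q}_i$ is evaluated at $(\beta,\beta)$ here (the one-variable notation $\mathcal{Q}_i(\beta)$ means $\mathcal{Q}_i(\beta,\beta)$), the relevant integrand is $\beta\, c_{n-d+i-2}(U\langle t\pi^*h\rangle)\,(\pi^*h)^{d-i}\,\beta$. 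The whole dependence on $t$ sits in the single Chern class $c_{n-d+i-2}$ of the twisted bundle.

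First I would apply \eqref{eq:chernclasstensorproduct} with $p = n-d+i-2$, $E = U$, and $\delta = \pi^*h$, noting that $\rank(U)=n-2$ and checking the index condition $1\le p\le n-2$ holds in the relevant range (from \eqref{eq:inequalityni} we have $i+2\le n_i = n-d+i\le n$, so $p = n_i-2$ satisfies $i\le p\le n-2$, and $i\ge 2$ gives $p\ge 2\ge 1$). The formula then reads
$$c_{n-d+i-2}(U\langle t\pi^*h\rangle) = c_{n-d+i-2}(U) + t\big((n-2)-(n-d+i-2)+1\big)c_{n-d+i-3}(U)\,\pi^*h + O(t^2),$$
and the coefficient simplifies to $(n-2)-(n-d+i-2)+1 = d-i+1$. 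Substituting this into the integral defining $\mathcal{Q}_i$ splits it into the $t^0$ term, which is exactly $\mathcal{Q}_i(\beta;U)$, and the $t^1$ term, whose integrand is $(d-i+1)\,\beta\, c_{n-d+i-3}(U)\,\pi^*h\,(\pi^*h)^{d-i}\,\beta = (d-i+1)\,\beta\, c_{(n-d+(i-1))-2}(U)\,(\pi^*h)^{d-(i-1)}\,\beta$. The final step is to recognize this last integrand, after collecting $(\pi^*h)^{d-i}\cdot\pi^*h = (\pi^*h)^{d-i+1} = (\pi^*h)^{d-(i-1)}$ and $c_{n-d+i-3} = c_{n-d+(i-1)-2}$, as precisely the integrand of $\mathcal{Q}_{i-1}(\beta;U)$, yielding the claimed expansion.

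I do not anticipate any real obstacle here: this is a bookkeeping computation, and the only thing requiring care is matching the shifted index $i\mapsto i-1$ correctly in both the Chern subscript and the power of $\pi^*h$, and confirming the binomial coefficient from \eqref{eq:defcherntwisted} collapses to the linear term since all higher powers of $\delta=\pi^*h$ beyond the first contribute to $O(t^2)$. The validity of invoking \eqref{eq:chernclasstensorproduct} rests on the index range, which \eqref{eq:inequalityni} supplies, so the lemma follows directly.
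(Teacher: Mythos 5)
Your proof is correct and follows essentially the same route as the paper: apply the first-order Chern class expansion \eqref{eq:chernclasstensorproduct} to $c_{n-d+i-2}(U\langle t\pi^*h\rangle)$, verify the index range via $n\ge d+2$, simplify the coefficient $\rank(U)-(n-d+i-2)+1=d-i+1$, and match the linear term with the integrand of $\mathcal Q_{i-1}(\beta;U)$. No gaps.
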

\begin{proof}
Since $n\ge d+2$ we have $1\le n-d+i-2\le n-2 = \rank(U)$.  Thus using the equation for the Chern class of the tensor product \eqref{eq:chernclasstensorproduct}, and observing that $\rank(U)-(n-d+i-2)+1 = d-i+1$, gives
$$ c_{n-d+i-2}(U\langle t\pi^*h\rangle) = c_{n-d+i-2}(U) + t(d-i+1) c_{n-d+i-3}(U) (\pi^*h) + O(t^2)$$
Multiplying this by $\beta^2$ and integrating over $C$ gives \eqref{eq:Omegaiexpansion}.
\end{proof}

\begin{lemma}\label{lem:equalitycaseofF}
Fix $1\le i\le d-1$. Let $\beta\in H^{1,1}(P;\R)$ be such that
\begin{equation}\label{eq:equalitycaseofFhypothesis}
\mathcal Q_{i+1} (\beta,\pi^*h) = 0 = \mathcal Q_{i+1}(\beta).
\end{equation}
Then
\begin{align}
\mathcal Q_{i}(\beta) &=0\label{eq:propBimpliesAi+1conclusion2}\\
\mathcal G_{i}(\beta)&=0.\label{eq:propBimpliesAi+1conclusion3}\end{align}
\end{lemma}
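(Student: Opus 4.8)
The plan is to recognize $\mathcal{Q}_i(\beta)$ as (a positive multiple of) the derivative at $t=0$ of the Hodge--Index defect $g(t):=\mathcal{F}_{i+1}(\beta;U\langle t\pi^*h\rangle)$, and to force this derivative to vanish by showing that $t=0$ is an \emph{interior} maximum of $g$. Writing $U_t:=U\langle t\pi^*h\rangle$, I would first record, from the (polarized) expansion of Lemma \ref{lem:Omegaiexpansion} applied at level $i+1$ (whose linear coefficient is $d-i$) together with the hypothesis \eqref{eq:equalitycaseofFhypothesis}, the three expansions
\[
\mathcal{Q}_{i+1}(\beta;U_t)=t(d-i)\mathcal{Q}_i(\beta)+O(t^2),\qquad \mathcal{Q}_{i+1}(\beta,\pi^*h;U_t)=t(d-i)\mathcal{Q}_i(\beta,\pi^*h)+O(t^2),
\]
\[
\mathcal{Q}_{i+1}(\pi^*h;U_t)=\mathcal{Q}_{i+1}(\pi^*h)+O(t).
\]
Substituting into $g(t)=\mathcal{Q}_{i+1}(\beta;U_t)\,\mathcal{Q}_{i+1}(\pi^*h;U_t)-\mathcal{Q}_{i+1}(\beta,\pi^*h;U_t)^2$ and noting that the squared term is $O(t^2)$ yields $g(t)=(d-i)\,\mathcal{Q}_{i+1}(\pi^*h)\,\mathcal{Q}_i(\beta)\,t+O(t^2)$. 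Moreover \eqref{eq:equalitycaseofFhypothesis} gives directly $g(0)=\mathcal{Q}_{i+1}(\beta)\mathcal{Q}_{i+1}(\pi^*h)-\mathcal{Q}_{i+1}(\beta,\pi^*h)^2=0$.

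The crux is to show $g\le 0$ on a \emph{two-sided} neighbourhood of $t=0$, rather than only for $t\ge 0$. This is where ampleness of $F$ is used as an \emph{open} condition: since $F$ is ample, the twist $F\langle th\rangle$ stays ample for all $t$ in some interval $(-\epsilon,\infty)$, and its universal quotient is precisely $U_t$ (by \eqref{eq:universalquotienttwisted}), so $U_t$ is nef for every such $t$. Consequently Lemma \ref{lem:Q'nefFnonpositive}, applied at level $i+1$ to the nef bundle $U_t$, gives $g(t)=\mathcal{F}_{i+1}(\beta;U_t)\le 0$ for all $t\in(-\epsilon,\infty)$. Combined with $g(0)=0$, this exhibits $t=0$ as an interior maximum of the polynomial $g$, whence $g'(0)=0$. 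Comparing with the expansion above and using $d-i>0$ together with $\mathcal{Q}_{i+1}(\pi^*h)>0$ (Fulton--Lazarsfeld, \eqref{eq:thmfl}, valid since $2\le i+1\le d$) forces $\mathcal{Q}_i(\beta)=0$, which is \eqref{eq:propBimpliesAi+1conclusion2}.

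I expect the only delicate point to be exactly this openness observation: the one-sided range $t\ge 0$ only yields the inequality $\mathcal{Q}_i(\beta)\le 0$, and it is the extra nef room for slightly negative $t$ (coming from ampleness, not mere nefness, of $F$) that upgrades this to the equality $\mathcal{Q}_i(\beta)=0$. Everything else is the routine algebra of the twist expansion. Finally, \eqref{eq:propBimpliesAi+1conclusion3} is then immediate: in $\mathcal{G}_i(\beta)=\mathcal{Q}_i(\beta)\phi_i(\pi^*h)-2\mathcal{Q}_i(\beta,\pi^*h)\phi_i(\beta)$ the first term vanishes by \eqref{eq:propBimpliesAi+1conclusion2}, and the second vanishes because $\phi_i(\beta)=\mathcal{Q}_{i+1}(\pi^*h,\beta)=0$ is part of the hypothesis \eqref{eq:equalitycaseofFhypothesis}.
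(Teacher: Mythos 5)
Your proposal is correct and follows essentially the same route as the paper: both define $g(t)=\mathcal F_{i+1}(\beta;U\langle t\pi^*h\rangle)$, use openness of ampleness of $F\langle th\rangle$ to get $g\le 0$ on a two-sided neighbourhood of $0$ with $g(0)=0$, and extract $\mathcal Q_i(\beta)=0$ from $g'(0)=0$ via the expansion of Lemma \ref{lem:Omegaiexpansion}. The deduction of \eqref{eq:propBimpliesAi+1conclusion3} from $\phi_i(\beta)=\mathcal Q_{i+1}(\beta,\pi^*h)=0$ is also exactly the paper's.
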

\begin{proof}

Observe first \eqref{eq:equalitycaseofFhypothesis} clearly implies 
\begin{equation}\label{eq:propBiimpliesAi+1setup2}\mathcal F_{i+1}(\beta;U)=0.\end{equation}  On the other hand for $t\in \mathbb R$ with $|t|$ sufficiently small the $\mathbb R$-twisted bundle $F\langle th \rangle$ remains ample.  Thus the $\mathbb R$-twisted bundle $U\langle t\pi^* h\rangle$ is nef, and so by Lemma \ref{lem:Q'nefFnonpositive}
\begin{equation}\label{eq:propBiimpliesAi+1setup3} f(t):=\mathcal F_{i+1}(\beta; U\langle t\pi^* h \rangle) \le 0 \text{ for all } |t|\ll 1.
\end{equation}
So \eqref{eq:propBiimpliesAi+1setup2} says $f(0)=0$, which together with \eqref{eq:propBiimpliesAi+1setup3} implies
$$\frac{df}{dt}|_{t=0} =0.$$
We may calculate this derivative using Lemma \ref{lem:Omegaiexpansion}.  In fact up to terms of order $O(t^2)$,

\begin{align*}
f(t) &= [\mathcal Q_{i+1}(\beta) + t(d-i)\mathcal Q_{i}(\beta)][\mathcal Q_{i+1}(\pi^*h) + t(d-i) \mathcal Q_{i}(\pi^*h)] \\
&\ \ \ - \left[\mathcal Q_{i+1}(\beta,\pi^*h) + t(d-i)\mathcal Q_{i}(\beta,\pi^*h)\right]^2+ O(t^2)\\
& = t(d-i)\mathcal Q_{i}(\beta) \mathcal Q_{i+1}(\pi^*h) + O(t^2)
\end{align*}
where the last equality uses  our assumption \eqref{eq:equalitycaseofFhypothesis}.  
Hence 
\begin{align*}
0=  (d-i) \mathcal Q_i(\beta) \mathcal Q_{i+1}(\pi^*h).
\end{align*}
Now recall \eqref{eq:thmfl} gives $\mathcal Q_{i+1}(\pi^*h)>0$.
Hence $\mathcal Q_{i}(\beta)=0$ which is  \eqref{eq:propBimpliesAi+1conclusion2}.

Finally $\phi_i(\beta) = \mathcal Q_{i+1}(\beta,\pi^*h)=0$  by hypothesis, and hence
 $$\mathcal G_i(\beta)= \mathcal Q_i(\beta) \phi_i(\pi^*h) - 2 \mathcal Q_i(\beta,\pi^*h)\phi_i(\beta)=0$$ as claimed in \eqref{eq:propBimpliesAi+1conclusion3}.
\end{proof}

\begin{proof}[Proof of Proposition \ref{prop:BiimpliesAi+1}]
Fix $2\le i\le d-1$ and suppose $(B_i)$ holds, and the aim is to show $(A_{i+1})$ holds.  To this end suppose $\beta\in H^{1,1}(P;\R)$ satisfies
\begin{equation}\mathcal Q_{i+1}(\beta) =0= \mathcal Q_{i+1}(\beta, \pi^* h)\end{equation}
Then Lemma \ref{lem:equalitycaseofF} implies
$$\mathcal G_i(\beta)=0.$$
 But by  $(B_i)$ this implies $\beta=0$.   Looking back at Definition-Lemma \ref{definitionlemmaAi} we conclude $(A_{i+1})$ holds as desired.
\end{proof}

\subsection{Proof of Proposition \ref{prop:ChaspropertyA2}}

\begin{lemma}\label{lem:assumption:chernclass}
For all $p$ we have
\begin{equation}c_p(U) = c_{p-1}(U) \zeta + \pi^* c_{p}(F)\label{eq:chernidentityU}\end{equation}

In particular if $\Omega\in H^{j,j}(X;\mathbb R)$ then
\begin{align} c_p(U)(\pi^* \Omega) &= c_{p-1}(U) \zeta (\pi^* \Omega) \text{ for } j+p\ge d+1\label{eq:chernhigherU}\\
 c_p(U)(\pi^* \Omega)& = c_{p-2}(U) \zeta^2 (\pi^* \Omega) \text{ for } j+p\ge d+2.\label{eq:chernhigherUdouble}
 \end{align}
\end{lemma}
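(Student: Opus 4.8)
The claim is the identity
\begin{equation*}
c_p(U) = c_{p-1}(U)\,\zeta + \pi^* c_p(F),
\end{equation*}
together with its two consequences for products with a pulled-back class $\pi^*\Omega$.

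\textbf{Plan for the main identity.} The setup is the tautological exact sequence $0\to K\to \pi^*F\to U\to 0$ on $P=\mathbb{P}_{\sub}(F)$, and $\zeta := -c_1(K)$. Since $K$ is a line bundle (as $F$ has rank $f+1$ and $U$ has rank $f$, the kernel $K$ has rank one), the plan is to apply the Whitney sum formula to this sequence. First I would write the total Chern class relation $c(\pi^*F) = c(K)\, c(U)$, and since $K$ is a line bundle with $c_1(K) = -\zeta$, we have $c(K) = 1 - \zeta$. Thus $\pi^* c(F) = (1-\zeta)\, c(U)$, which rearranges to $c(U) = \pi^* c(F) \cdot (1-\zeta)^{-1} = \pi^* c(F)\,(1+\zeta+\zeta^2+\cdots)$. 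Comparing terms in degree $p$ is one route, but the cleaner route is to read off degree $p$ directly from $\pi^*c(F) = (1-\zeta)c(U)$: the degree-$p$ part gives $\pi^* c_p(F) = c_p(U) - \zeta\, c_{p-1}(U)$, which is exactly the asserted \eqref{eq:chernidentityU} after moving the term across. I expect no obstacle here; the only point needing care is that the identity is stated ``for all $p$,'' so I would note it holds trivially in the boundary cases (for $p=0$, and for $p>\rank U$ where both sides involving $c_p(U)$ vanish against the line-bundle correction), so that the two consequences can be iterated freely.

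\textbf{Plan for the two consequences.} For \eqref{eq:chernhigherU}, take $\Omega\in H^{j,j}(X;\mathbb{R})$ with $j+p\ge d+1$ and multiply \eqref{eq:chernidentityU} by $\pi^*\Omega$:
\begin{equation*}
c_p(U)\,\pi^*\Omega = c_{p-1}(U)\,\zeta\,\pi^*\Omega + \pi^* c_p(F)\,\pi^*\Omega.
\end{equation*}
The plan is to argue that the last term vanishes. The class $\pi^*c_p(F)\,\pi^*\Omega = \pi^*\big(c_p(F)\,\Omega\big)$ is pulled back from $X$ and lives in degree $(j+p,j+p)$; since $j+p\ge d+1 > d = \dim X$, the class $c_p(F)\,\Omega$ vanishes in $H^{j+p,j+p}(X;\mathbb{R})$ for dimension reasons, hence so does its pullback. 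This leaves precisely \eqref{eq:chernhigherU}. For \eqref{eq:chernhigherUdouble} with $j+p\ge d+2$, I would apply \eqref{eq:chernhigherU} twice: the first application gives $c_p(U)\pi^*\Omega = c_{p-1}(U)\,\zeta\,\pi^*\Omega$, and then applying \eqref{eq:chernidentityU} to $c_{p-1}(U)$ and multiplying by $\zeta\,\pi^*\Omega$ yields $c_{p-1}(U)\,\zeta\,\pi^*\Omega = c_{p-2}(U)\,\zeta^2\,\pi^*\Omega + \pi^*\big(c_{p-1}(F)\Omega\big)\,\zeta$, where the latter term vanishes because $c_{p-1}(F)\Omega$ sits in degree $(j+p-1, j+p-1)$ with $j+p-1\ge d+1 > d$.

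The substance of the lemma is entirely the Whitney formula for the line bundle $K$; the consequences are bookkeeping with the dimension bound on $X$. The only step that rewards attention is keeping the degree counts straight in the iterated application for \eqref{eq:chernhigherUdouble}, to confirm the vanishing of the pulled-back correction term at the sharpened threshold $j+p\ge d+2$.
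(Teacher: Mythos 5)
Your proposal is correct and follows essentially the same route as the paper: the Whitney formula $(1-\zeta)c(U)=\pi^*c(F)$ for the tautological sequence, extraction of the degree-$p$ part, and then vanishing of $\pi^*(c_p(F)\Omega)$ for dimension reasons, with \eqref{eq:chernhigherUdouble} obtained by iterating. Your care in the second iteration (applying the identity to $c_{p-1}(U)$ and multiplying by $\zeta\,\pi^*\Omega$, rather than literally reapplying \eqref{eq:chernhigherU} to a class not of the form $\pi^*\Omega$) is a small but welcome precision over the paper's terser ``two applications.''
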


\begin{proof}
The first equation follows from the exact sequence
$$0\to  K\to \pi^* F\to U \to 0$$
and $\zeta = -c_1(K)$ so 
$$(1-\zeta) c(U) = c(K) c(U) =  \pi^*c(F)$$
and thus taking the degree $p$ part, 
$$c_p(U) - \zeta c_{p-1}(U) = \pi^*c_p(F).$$
Equation \eqref{eq:chernhigherU} follows as $\dim X=d$ so if $\Omega\in H^{j,j}(X;\mathbb R)$ and $j+p\ge d+1$ then $c_p(F).\Omega=0$.   The proof of \eqref{eq:chernhigherUdouble}  follows from two applications of \eqref{eq:chernhigherU}.
\end{proof}

We proceed now to show $(A_2)$  holds.   To this end suppose $\beta\in H^{1,1}(P;\mathbb R)$ satisfies
\begin{align}
\mathcal Q_2(\beta)&=0\label{eq:A2setup1} \text{ and }\\
\mathcal Q_2(\beta,\pi^*h)&=0\label{eq:A2setup2}
\end{align}
Our aim is to show that $\beta=0$.

We have $\beta = \pi^* \alpha + \lambda \zeta$ for some $\alpha\in H^{1,1}(X;\mathbb R)$ and $\lambda\in \mathbb R$.  Then 
\begin{align*}
0&=\mathcal Q_1(\beta) \tag{by Lemma \ref{lem:equalitycaseofF}}\\
&=\int_{C} (\pi^*h)^{d-1} c_{n-d-1}(U) \beta^2\\
&=\int_{C} (\pi^*h)^{d-1} c_{n-d-1}(U) (\pi^*\alpha + \lambda\zeta)^2\\
&=2\lambda \int_{C} (\pi^*h)^{d-1} c_{n-d-1}(U)\pi^* \alpha \zeta + \lambda^2 \int_{C} (\pi^*h)^{d-1} c_{n-d-1}(U)  \zeta^2 \tag{since  $\alpha^2h^{d-1}=0$ as $\dim X=d$}\\
&=2\lambda \int_{C} (\pi^*h)^{d-1} c_{n-d}(U)\pi^* \alpha  + \lambda^2 \int_{C} (\pi^*h)^{d-1} c_{n-d+1}(U) \tag{by \eqref{eq:chernhigherU} and \eqref{eq:chernhigherUdouble} using also $n\ge d+2$}\\
&= 2\lambda A + \lambda^2 B
\end{align*} 
where
\begin{align*}
A&: = \int_{C} (\pi^*h)^{d-1} c_{n-d}(U)\pi^* \alpha\\
B&:= \int_{C} (\pi^*h)^{d-1} c_{n-d+1}(U)\\
&=  \int_{C} (\pi^*h)^{d-3} (\pi^*h)^2 c_{n-d+1}(U)\\
&=\mathcal Q_3(\pi^*h)\\
&>0. \tag{by \eqref{eq:thmfl}}
\end{align*}
On the other hand,
\begin{align*}
0 &= \mathcal Q_2(\beta,\pi^*h) \tag{ by \eqref{eq:A2setup2}}\\
&=\int_{C} (\pi^*h)^{d-2} c_{n-d}(U) \beta (\pi^*h)\\
&=\int_{C} (\pi^*h)^{d-1} c_{n-d} (U)(\pi^*\alpha + \lambda \zeta) \\
&= A + \lambda B \tag{ by \eqref{eq:chernhigherU} and $n\ge d+2$}
\end{align*}

Thus in summary we have $  2\lambda A+ \lambda^2 B =0= A + \lambda B$ and $B\neq 0$ which forces $\lambda=0$.   

Let $W$ be the class of the fibre of $C$ (as we are assuming $C$ is locally a product, the class of this fibre is the same for every fibre).  Then as $\lambda=0$,
$$ 0=A = \int_C c_{n-d}(U) (\pi^* h)^{d-1} \pi^* \alpha = \int_W c_{n-d}(U) \int_X h^{d-1} \alpha.$$
But $\int_W c_{n-d}(U) = \int_W \zeta^{n-d}>0$ as $\zeta$ is relatively ample. Therefore
\begin{equation} \int_X h^{d-1} \alpha=0 \label{eq:primitive}\end{equation}
Furthermore
\begin{align*}
0 &= \mathcal Q_2(\beta) \tag{from \eqref{eq:A2setup1}} \\
&= \mathcal Q_2(\pi^* \alpha) \tag{as $\lambda=0$}\\
&= \int_C c_{n-d}(U) (\pi^*h)^{d-2} (\pi^*\alpha)^2 \\
&= \int_W c_{n-d}(U)\int_X h^{d-2}\alpha^2\\
& = \int_W \zeta^{n-d} \int_X h^{d-2}\alpha^2
\end{align*}
and thus
$$\int_X h^{d-2}\alpha^2=0.$$
Coupled with equation \ref{eq:primitive} this implies $\alpha=0$ by the Hodge-Riemann bilinear relations for $h$.
This completes the proof that $(A_2)$ holds.

\subsection{Proof of Proposition \ref{prop:AiimpliesBi}}

Fix $2\le i\le d-2$, suppose $(A_i)$ holds, and the aim is to show $(B_i)$ holds.    To this end, suppose that $\beta\in H^{1,1}(P;\mathbb R)$ satisfies 
\begin{equation}
\mathcal G_i(\beta)=0\label{eq:proofAiimpliesBiassumption}
\end{equation}
We have to show that $\beta=0$.

\begin{claim}
We have
\begin{equation}\phi_i(\beta)=0\label{eq:lemmaAiimpliesBi1}\end{equation}
\begin{equation}\mathcal Q_i(\beta)=0\label{eq:lemmaAiimpliesBi2}
\end{equation}
and
\begin{equation} \mathcal Q_i(\beta,\beta') \phi_i(\pi^*h) = \mathcal Q_i(\beta,\pi^*h) \phi_i(\beta')
\label{eq:lemmaAiimpliesBi3}\end{equation}
for all $\beta'\in H^{1,1}(P;\mathbb R)$.
\end{claim}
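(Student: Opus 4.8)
The plan is to prove the three displayed claims in \eqref{eq:lemmaAiimpliesBi1}, \eqref{eq:lemmaAiimpliesBi2} and \eqref{eq:lemmaAiimpliesBi3} and then deduce $\beta=0$ from them using property $(A_i)$. The strategy mirrors the derivative-in-$t$ argument already used in the proof of Lemma \ref{lem:equalitycaseofF}, now applied to $\mathcal{G}_i$ rather than $\mathcal{F}_i$. First I would note that for $|t|$ sufficiently small the twisted bundle $U\langle t\pi^*h\rangle$ is nef (because $F\langle th\rangle$ stays ample), so Lemma \ref{lem:mathcalGnonpositive} gives
$$ g(t) := \mathcal{G}_i(\beta; U\langle t\pi^*h\rangle) \le 0 \text{ for all } |t|\ll 1, $$
while the hypothesis \eqref{eq:proofAiimpliesBiassumption} says $g(0)=0$. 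Hence $t=0$ is an interior maximum and $g'(0)=0$. The key computation is to expand $g(t)$ in powers of $t$ using Lemma \ref{lem:Omegaiexpansion}: each factor $\mathcal{Q}_i(\,\cdot\,;U\langle t\pi^*h\rangle)$ picks up a first-order term $t(d-i+1)\mathcal{Q}_{i-1}(\,\cdot\,)$, and similarly one expands $\phi_i(\,\cdot\,;U\langle t\pi^*h\rangle)=\mathcal{Q}_{i+1}(\pi^*h,\,\cdot\,;U\langle t\pi^*h\rangle)$, whose leading correction involves $\mathcal{Q}_i$. Collecting the coefficient of $t$ in $g(t)$ and setting it to zero should, after cancellation using $g(0)=0$, produce a relation that—combined with the positivity $\phi_i(\pi^*h)=\mathcal{Q}_{i+1}(\pi^*h)>0$ from \eqref{eq:thmfl}—forces \eqref{eq:lemmaAiimpliesBi1} and \eqref{eq:lemmaAiimpliesBi2}.

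For \eqref{eq:lemmaAiimpliesBi3} I would exploit that, once \eqref{eq:lemmaAiimpliesBi1} and \eqref{eq:lemmaAiimpliesBi2} hold, $\beta$ lies on the boundary of the quadratic inequality encoded by $(A_i)$ in a degenerate way, so the associated bilinear form must vanish in the "$\beta$-direction." Concretely, the Hodge--Index inequality built into the Hodge-Riemann property of $\mathcal{Q}_i$ (available by $(A_i)$) says that the symmetric $2\times 2$ Gram matrix of $\mathcal{Q}_i$ on the span of $\beta$ and an arbitrary $\beta'$, after projecting against $\pi^*h$, is semidefinite; vanishing of $\mathcal{Q}_i(\beta)$ together with $\mathcal{Q}_i(\beta,\pi^*h)=0$ (or rather the relation between $\mathcal Q_i(\beta,\pi^*h)$ and $\phi_i(\beta)$) places $\beta$ in the kernel, which is exactly what \eqref{eq:lemmaAiimpliesBi3} expresses. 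Alternatively, and more in the spirit of the block-form Proposition \ref{prop:quadraticformonedimensionhigher}, I would observe that the equality case $g(0)=0$ in Lemma \ref{lem:mathcalGnonpositive} is itself the equality case of a Hodge--Index-type inequality on the irreducible variety $C'$; tracing through the proof of Corollary \ref{cor:inequalitiesinnefcase} shows that equality propagates to the linearized identity \eqref{eq:lemmaAiimpliesBi3}.

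The main obstacle I anticipate is the bookkeeping in the $t$-expansion of $g(t)=\mathcal{G}_i(\beta;U\langle t\pi^*h\rangle)$: unlike $\mathcal{F}_i$, the quantity $\mathcal{G}_i$ is not a pure Hodge--Index discriminant but the asymmetric combination $\mathcal{Q}_i(\beta)\phi_i(\pi^*h)-2\mathcal{Q}_i(\beta,\pi^*h)\phi_i(\beta)$, so several first-order terms must be tracked and it is not a priori obvious that the coefficient of $t$ factors as cleanly as in Lemma \ref{lem:equalitycaseofF}. One must verify that the hypothesis $\mathcal{G}_i(\beta)=0$ is strong enough to kill the cross terms and leave a single product whose vanishing yields \eqref{eq:lemmaAiimpliesBi1} and \eqref{eq:lemmaAiimpliesBi2}; the constraint $2\le i\le d-2$ is presumably what guarantees the relevant indices stay in the range \eqref{eq:inequalityni} so that all the Chern-class expansions in Lemma \ref{lem:Omegaiexpansion} are valid. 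Once the Claim is established, the conclusion $\beta=0$ follows by feeding \eqref{eq:lemmaAiimpliesBi3} back into $(A_i)$: the relation says $\beta$ is $\mathcal{Q}_i$-orthogonal to everything after correcting by $\pi^*h$, so by the Hodge-Riemann property of $\mathcal{Q}_i$ (signature $(1,h^{1,1}(P)-1)$) the vector $\beta$ must be proportional to $\pi^*h$, and then $\phi_i(\beta)=0$ together with $\phi_i(\pi^*h)>0$ forces the proportionality constant to be zero.
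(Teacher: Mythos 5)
Your overall instinct---exploit that $\mathcal G_i\le 0$ with a zero at the given point, hence a vanishing first variation---is the right one, but you have chosen the wrong perturbation, and this is a genuine gap. You propose to set $g(t)=\mathcal G_i(\beta;U\langle t\pi^* h\rangle)$ and extract the Claim from $g'(0)=0$. Carrying out that expansion with Lemma \ref{lem:Omegaiexpansion} gives the single scalar relation
$$(d-i+1)\bigl[\mathcal Q_{i-1}(\beta)\phi_i(\pi^*h)-2\mathcal Q_{i-1}(\beta,\pi^*h)\phi_i(\beta)\bigr]+(d-i)\bigl[\mathcal Q_i(\beta)\mathcal Q_i(\pi^*h)-2\mathcal Q_i(\beta,\pi^*h)^2\bigr]=0,$$
in which, unlike in Lemma \ref{lem:equalitycaseofF}, nothing cancels: the hypothesis $\mathcal G_i(\beta)=0$ does not kill the $\mathcal Q_{i-1}$-terms, and one scalar identity cannot yield the two vanishings \eqref{eq:lemmaAiimpliesBi1}, \eqref{eq:lemmaAiimpliesBi2} together with the bilinear identity \eqref{eq:lemmaAiimpliesBi3}. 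The paper instead perturbs the \emph{class}: for an arbitrary $\beta'$ it sets $g(t)=\mathcal G_i(\beta+t\beta')$, which is $\le 0$ for all $t$ by Lemma \ref{lem:mathcalGnonpositive} and vanishes at $t=0$; the resulting first-variation identity \eqref{eq:runoutofnames} holds for \emph{every} $\beta'$, so it carries a full linear functional's worth of information. Specializing to $\beta'=\pi^*h$ makes two terms cancel and gives $\mathcal Q_i(\pi^*h)\phi_i(\beta)=0$, hence \eqref{eq:lemmaAiimpliesBi1} by \eqref{eq:thmfl}; then $\mathcal G_i(\beta)=0$ collapses to $\mathcal Q_i(\beta)\phi_i(\pi^*h)=0$, giving \eqref{eq:lemmaAiimpliesBi2}; and substituting $\phi_i(\beta)=0$ back into \eqref{eq:runoutofnames} is exactly \eqref{eq:lemmaAiimpliesBi3}.

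Your fallback arguments for \eqref{eq:lemmaAiimpliesBi3} also do not close the gap. That identity asserts that the linear form $\mathcal Q_i(\beta,\cdot)$ is proportional to $\phi_i(\cdot)$; this is not a consequence of the two scalar vanishings plus the Hodge--Riemann property of $\mathcal Q_i$, so the ``Gram matrix / kernel'' argument cannot produce it. The route through the equality case of Corollary \ref{cor:inequalitiesinnefcase} is explicitly unavailable: that corollary concerns merely nef data on a possibly singular $C$ and the paper emphasises that no claim is made about when equality holds there---which is precisely why the variational argument is needed. (Separately, in your final paragraph the correct conclusion from \eqref{eq:lemmaAiimpliesBi3} is that $\beta$ is proportional to the $\mathcal Q_i$-dual $\gamma$ of $\phi_i$, not to $\pi^*h$; the contradiction is then reached by comparing $\mathcal Q_i(\gamma,\zeta)$ with $\phi_i(\zeta)$, though that step lies beyond the Claim itself.)
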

\begin{proof}
Let $\beta'\in H^{1,1}(P;\mathbb R)$.  Then by Lemma \ref{lem:mathcalGnonpositive}
$$ g(t):=\mathcal G_i(\beta + t\beta')\le 0 \text{ for all } t\in \mathbb R.$$
Moreover \eqref{eq:proofAiimpliesBiassumption} implies $g(0)=0$ and so
$$ \frac{dg}{dt}|_{t=0} =0.$$
Now ignoring terms of order $O(t^2)$,
\begin{align*}
\mathcal G_i( \beta + t\beta') &= \mathcal Q_i(\beta + t\beta') \phi_i(\pi^*h) - 2\mathcal Q_i(\beta + t\beta',\pi^*h) \phi_i( \beta + t\beta')\\
&=(\mathcal Q_i(\beta) + 2t\mathcal Q_i(\beta, \beta'))\phi_i(\pi^* h) \\
&\ \ \ \ \ - 2( \mathcal Q_i(\beta,\pi^*h) + t \mathcal Q_i(\beta',\pi^*h))(\phi_i(\beta) + t\phi_i(\beta')) +O(t^2)\\
&= \mathcal G_i(\beta) + 2t \mathcal Q_i(\beta,\beta') \phi_i(\pi^* h) - 2t \mathcal Q_i(\beta',\pi^*h) \phi_i(\beta) \\&- 2t\mathcal Q_i(\beta,\pi^*h)\phi_i(\beta') + O(t^2).
\end{align*}
Hence
\begin{equation}\label{eq:runoutofnames} 0 =  \mathcal Q_i(\beta,\beta') \phi_i(\pi^* h) -  \mathcal Q_i(\beta',\pi^*h) \phi_i(\beta) - \mathcal Q_i(\beta,\pi^*h) \phi_i(\beta').
\end{equation}
In particular this applies when $\beta' = \pi^*h$ at which point the first and third terms cancel giving
$$ 0 = \mathcal Q_i(\pi^*h) \phi_i(\beta)$$
and since $\mathcal Q_i(\pi^*h)>0$ \eqref{eq:thmfl} this yields 
$$\phi_i(\beta)=0$$
giving \eqref{eq:lemmaAiimpliesBi1}.    In turn this implies
$$ 0 = \mathcal G_i(\beta) = \mathcal Q_i( \beta) \phi_i(\pi^*h)$$
and  $\phi_i(\pi^*h) =\mathcal Q_{i+1}(\pi^*h)>0$  giving \eqref{eq:lemmaAiimpliesBi2}.  Finally \eqref{eq:runoutofnames} also yields
$$ \mathcal Q_i( \beta,\beta') \phi_i(\pi^*h) = \mathcal Q_i(\beta,\pi^*h) \phi_i(\beta')$$
for all $\beta'\in H^{1,1}(P;\mathbb R)$ which is 
\eqref{eq:lemmaAiimpliesBi3}.
\end{proof}

Now by our assumption that $(A_i$) holds, the quadratic form $\mathcal Q_i$ has the Hodge-Riemann property.  In particular it is non-degenerate.  Hence there is a $\gamma\in H^{1,1}(P;\mathbb R)$ dual to $\phi_i$,  i.e.\ such that
$$\mathcal Q_i(\beta', \gamma) = \phi_i(\beta') \text{ for all } \beta'\in H^{1,1}(P;\mathbb R).$$
We observe that since $\phi_i(\pi^*h)>0$ we have $\phi_i\neq 0$ and hence $\gamma\neq 0$.

\begin{claim}\label{claim:alphagamma}
There exists a $\kappa\in \mathbb R$ such that $\beta = \kappa\gamma$
\end{claim}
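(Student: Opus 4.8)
The goal of Claim \ref{claim:alphagamma} is to show that $\beta$ is a scalar multiple of the dual vector $\gamma$. I would exploit the relation \eqref{eq:lemmaAiimpliesBi3}, which states that
$$\mathcal Q_i(\beta,\beta')\phi_i(\pi^*h) = \mathcal Q_i(\beta,\pi^*h)\phi_i(\beta') \text{ for all } \beta'\in H^{1,1}(P;\mathbb R).$$
The whole point of introducing $\gamma$ is to convert the linear functional $\phi_i$ on the right-hand side into the bilinear form $\mathcal Q_i$: since $\phi_i(\beta') = \mathcal Q_i(\beta',\gamma) = \mathcal Q_i(\gamma,\beta')$, substituting this in gives
$$\mathcal Q_i(\beta,\beta')\phi_i(\pi^*h) = \mathcal Q_i(\beta,\pi^*h)\mathcal Q_i(\gamma,\beta') \text{ for all } \beta'.$$

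\textbf{Key steps.} Rearranging, I would obtain $\mathcal Q_i\bigl(\phi_i(\pi^*h)\beta - \mathcal Q_i(\beta,\pi^*h)\gamma,\ \beta'\bigr)=0$ for every $\beta'\in H^{1,1}(P;\mathbb R)$. Since $(A_i)$ guarantees that $\mathcal Q_i$ has the Hodge-Riemann property, it is in particular non-degenerate, so the vector paired trivially against everything must itself vanish:
$$\phi_i(\pi^*h)\,\beta = \mathcal Q_i(\beta,\pi^*h)\,\gamma.$$
Now $\phi_i(\pi^*h) = \mathcal Q_{i+1}(\pi^*h)>0$ by \eqref{eq:thmfl}, so it is nonzero, and I may divide to conclude $\beta = \kappa\gamma$ with
$$\kappa = \frac{\mathcal Q_i(\beta,\pi^*h)}{\phi_i(\pi^*h)}.$$
This establishes the claim with an explicit scalar.

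\textbf{The main obstacle.} This particular claim is essentially a formal consequence of non-degeneracy once $\gamma$ has been introduced, so I do not expect genuine difficulty here; the only care needed is to invoke the right pieces in order — first the defining property of $\gamma$, then relation \eqref{eq:lemmaAiimpliesBi3}, then non-degeneracy from $(A_i)$, and finally the positivity $\phi_i(\pi^*h)>0$ to license the division. The real work of Proposition \ref{prop:AiimpliesBi} lies \emph{after} this claim: having written $\beta = \kappa\gamma$, one must still leverage the vanishing relations \eqref{eq:lemmaAiimpliesBi1} and \eqref{eq:lemmaAiimpliesBi2} together with the Hodge-Riemann property of $\mathcal Q_i$ to force $\kappa = 0$ and hence $\beta = 0$. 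I would anticipate that step—rather than the present claim—to be where the Hodge-Index inequality encoded in $(A_i)$ is used in an essential way, likely by testing \eqref{eq:lemmaAiimpliesBi2} against $\gamma$ and comparing with $\phi_i(\beta)=0$ to extract a sign contradiction unless $\kappa$ vanishes.
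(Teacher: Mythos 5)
Your argument is correct, and it reaches the claim by a genuinely different and in fact shorter route than the paper. The paper substitutes $\beta'=\gamma$ into \eqref{eq:lemmaAiimpliesBi3} to get $\mathcal Q_i(\beta,\pi^*h)\,\phi_i(\gamma)=0$, splits into cases according to whether $\mathcal Q_i(\beta,\pi^*h)$ vanishes (if it does, $\beta=0$ follows from \eqref{eq:lemmaAiimpliesBi2} and the Hodge-Riemann property; if not, $\phi_i(\gamma)=0$), and in the second case observes that $\beta$ and $\gamma$ are both isotropic vectors in the codimension-one subspace $\ker(\phi_i)$ on which $\mathcal Q_i$ is negative semidefinite, invoking the auxiliary Lemma \ref{lem:intersectionnullandcodim1} to force proportionality. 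You instead rewrite $\phi_i(\beta')=\mathcal Q_i(\gamma,\beta')$ throughout \eqref{eq:lemmaAiimpliesBi3}, so that $\phi_i(\pi^*h)\beta-\mathcal Q_i(\beta,\pi^*h)\gamma$ pairs to zero against every $\beta'$, and then use only the non-degeneracy of $\mathcal Q_i$ (a consequence of $(A_i)$) together with $\phi_i(\pi^*h)>0$ from \eqref{eq:thmfl} to solve for $\beta$ explicitly. Your version needs neither the case distinction, nor the vanishings \eqref{eq:lemmaAiimpliesBi1}--\eqref{eq:lemmaAiimpliesBi2}, nor the signature and semidefiniteness input packaged in Lemma \ref{lem:intersectionnullandcodim1}; it extracts strictly less from the hypotheses and would render that lemma unnecessary for this proposition. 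Your closing remark is also accurate: the substantive use of the Hodge-Riemann structure occurs in the completion of the proof of Proposition \ref{prop:AiimpliesBi}, where one tests against $\zeta$ to rule out $\beta\neq 0$.
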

\begin{proof}
From \eqref{eq:lemmaAiimpliesBi3} with $\gamma$ substituted for $\beta'$,
$$\mathcal Q_i(\beta,\pi^*h) \phi_i(\gamma)=\mathcal Q_i( \beta,\gamma) \phi_i(\pi^*h) = \phi_i(\beta) \phi_i(\pi^*h) =0
$$
where the last equality comes from \eqref{eq:lemmaAiimpliesBi1}.  

Suppose first that $\mathcal Q_i(\beta, \pi^*h)=0$.  Recall we already know from \eqref{eq:lemmaAiimpliesBi2} that $\mathcal Q_i(\beta)=0$ and $\mathcal Q_i(\pi^*h)>0$.  Thus since $\mathcal Q_i$ has the Hodge-Riemann property we deduce that $\beta=0$ so the Claim certainly holds with $\kappa=0$.

So we may assume $\mathcal Q_i(\beta,\pi^*h)\neq 0$, and so 
$$\phi_i(\gamma)=0.$$
Thus, in summary, the classes $\beta$ and $\gamma$ both lie in $\ker(\phi_i)$ and also in the null cone of $\mathcal Q_i$.  Recall $\mathcal Q_i$ has signature $(1,h^{1,1}(P)-1)$ and is negative semidefinite on $\ker(\phi_i)$ by Lemma \ref{lem:mathcalGnonpositive}.  But this is only possible if $\beta$ is proportional to $\gamma$ (this is a formal statement about such bilinear forms that for completeness we include in Lemma \ref{lem:intersectionnullandcodim1}).  This finishes the proof.
\end{proof}

\begin{lemma}\label{lem:intersectionnullandcodim1}
Let $\mathcal Q$ be a bilinear form on a finite dimensional vector space $V$ with the Hodge-Riemann property.  Let $W\subset V$ be a subspace of codimension $1$ on which $\mathcal Q$ is negative semidefinite.    Then if $\beta,\gamma\in W$ satisfy $\mathcal Q(\beta) = \mathcal Q(\gamma)=0$ and $\gamma\neq 0$ then $\beta= \kappa \gamma$ for some $\kappa\in \mathbb R$.
\end{lemma}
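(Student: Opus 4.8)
The plan is to exploit the fact that the Hodge-Riemann property forces $\mathcal Q$ to have signature $(1,\dim V-1)$, and in particular to be non-degenerate. The strategy is to identify the common isotropic directions of $\mathcal Q$ inside $W$ with the radical of the restriction $\mathcal Q|_W$, and then to show that this radical is at most one-dimensional. Since $\gamma$ is a nonzero element of that radical, $\beta$ must be proportional to it.

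First I would record the two structural consequences of non-degeneracy. Because $\mathcal Q$ is non-degenerate and $W$ has codimension $1$, the orthogonal complement $W^\perp=\{v\in V:\mathcal Q(v,w)=0 \text{ for all }w\in W\}$ satisfies $\dim W^\perp=1$. Consequently the radical of the restricted form,
$$ R:=\{w\in W:\mathcal Q(w,w')=0\text{ for all }w'\in W\}=W\cap W^\perp,$$
has dimension at most $1$. This is the step that controls the size of the degenerate locus and is where the Hodge-Riemann hypothesis (via non-degeneracy) enters.

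Next I would use the negative semidefiniteness of $\mathcal Q|_W$ to pin down the isotropic vectors. For a negative semidefinite symmetric form the Cauchy-Schwarz inequality $\mathcal Q(w,w')^2\le \mathcal Q(w)\mathcal Q(w')$ holds for $w,w'\in W$; hence if $\mathcal Q(w)=0$ then $\mathcal Q(w,w')=0$ for every $w'\in W$, i.e.\ $w\in R$. Applying this to $\beta$ and to $\gamma$ shows that both lie in $R$. Since $\gamma\neq 0$ and $\dim R\le 1$, we conclude $R=\mathbb R\gamma$, so $\beta=\kappa\gamma$ for some $\kappa\in\mathbb R$, as required.

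I do not expect any serious obstacle here, as the statement is purely linear-algebraic; the only point requiring a little care is the implication ``$\mathcal Q(w)=0\Rightarrow w\in R$'' for the semidefinite restriction, which rests on the semidefinite Cauchy-Schwarz inequality rather than on definiteness. Everything else is a counting argument using that $\mathcal Q$ is non-degenerate of signature $(1,\dim V-1)$.
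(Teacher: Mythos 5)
Your proof is correct, and it takes a genuinely different route from the paper's. The paper's argument first shows $\mathcal Q(\beta,\gamma)=0$ by expanding $0\ge\mathcal Q(\beta+t\gamma)=2t\,\mathcal Q(\beta,\gamma)$ over all $t\in\mathbb R$, then brings in the vector $h$ with $\mathcal Q(h)>0$: negative definiteness of $\mathcal Q$ on the primitive space $\{v:\mathcal Q(v,h)=0\}$ (condition (3) of Definition-Lemma \ref{deflemma:HRofquadraticform}) forces $\mathcal Q(\gamma,h)\neq 0$, so one may choose $t_0$ with $\mathcal Q(\beta+t_0\gamma,h)=0$, and the isotropic primitive vector $\beta+t_0\gamma$ must then vanish. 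You never touch $h$ or the primitive space: you identify the isotropic vectors of the semidefinite restriction $\mathcal Q|_W$ with the radical $W\cap W^{\perp}$ via the semidefinite Cauchy--Schwarz inequality, and bound that radical by $\dim W^{\perp}=1$ using only non-degeneracy of $\mathcal Q$. Both arguments are elementary and complete; yours is slightly more general in that it uses only non-degeneracy rather than the full signature $(1,\dim V-1)$, and it isolates the structural reason the conclusion holds (the degenerate locus of $\mathcal Q|_W$ is at most a line, and on a semidefinite form every isotropic vector is degenerate). The paper's version produces the coefficient $\kappa=-t_0$ explicitly and stays within the vocabulary of primitive classes already set up earlier; its opening polarization step is in fact the special case of your Cauchy--Schwarz observation applied to the single pair $(\beta,\gamma)$ rather than to all of $W$.
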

\begin{proof}
Let $h\in V$ be such that $\mathcal Q(h)>0$.  For $t\in \mathbb R$  we have $\beta+t\gamma\in W$ and hence
$$0 \ge \mathcal Q(\beta + t\gamma) = 2t\mathcal Q(\beta,\gamma).$$
Since this holds for all $t$ we conclude $\mathcal Q(\beta,\gamma)=0$.    Thus we actually have
$$ 0 = \mathcal Q (\beta + t\gamma) \text{ for all } t\in \mathbb R.$$
If $\mathcal Q(\gamma,h)=0$ then as $\mathcal Q(\gamma)=0$ and $\mathcal Q$ has the Hodge-Riemann property we would have $\gamma=0$ which is absurd.  So $\mathcal Q(\gamma,h)\neq 0$.  Thus we may find $t_0$ so $\mathcal Q(\beta + t_0\gamma,h)=0$.  Since also $\mathcal Q(\beta + t_0\gamma) =0$ we deduce from the Hodge-Riemann property of $\mathcal Q$ that $\beta+t_0\gamma=0$ and we are done.
\end{proof}

\begin{proof}[Completion of proof of Proposition \ref{prop:AiimpliesBi}]
Suppose for contradiction $\beta\neq 0$.    Invoking Claim \ref{claim:alphagamma} we may rescale $\beta$ and assume without loss of generality that actually $\beta = \gamma$, i.e.
$$\mathcal Q_i(\beta,\beta') = \phi_i(\beta') \text{ for all } \beta'\in H^{1,1}(P;\mathbb R).$$
In particular 
$$\mathcal Q_i(\beta,\zeta) = \phi_i(\zeta).$$
Now
\begin{align*}\mathcal Q_i(\beta,\zeta)& = \int_{C} \beta c_{n_i-2}(U) \zeta (\pi^*h)^{d-i}\\ 
&=\int_{C} c_{n_i-1}(U) \beta (\pi^*h)^{d-i} \tag{from \eqref{eq:chernhigherU} since $n_i-1 +d-i = n-1\ge d+1$ by \eqref{eq:inequalityni}}\\
&= \phi_i(\beta)\\
&=0\tag{ by \eqref{eq:lemmaAiimpliesBi1}}
\end{align*}
but
\begin{align*}
\phi_i(\zeta) &= \int_{C} c_{n_i-1}(U)\zeta (\pi^*h)^{d-i}\\
&= \int_{C} c_{n_i}(U) (\pi^*h)^{d-i}\tag{from \eqref{eq:chernhigherU}}\\
&=\int_{C} c_{n-d+i}(U) (\pi^*h)^{d-i}\\
&=\int_{C} c_{n-d+j-2}(U) (\pi^*h)^{d-j+2} \tag{where  $j:=i+2$}\\
&= \mathcal Q_j(\pi^*h)\\
&>0 \tag{by \eqref{eq:thmfl} as $2\le j\le d$}
\end{align*}
which is absurd.  Hence we must actually have $\beta=0$ and the proof of Proposition \ref{prop:AiimpliesBi} is complete.
\end{proof}
\subsection{Proof of Proposition \ref{prop:finalstep}}

Assume $(A_{d-2})$ holds.  Suppose $\alpha\in H^{1,1}(X;\mathbb R)$ is such that
\begin{equation}\mathcal Q_{d}(\pi^*\alpha) =0= \mathcal Q_{d}(\pi^*\alpha,\pi^*h).\label{eq:laststepsetup}\end{equation}
We have to show that $\alpha=0$.  To this end, we apply Lemma \ref{lem:equalitycaseofF} to get
\begin{align}\mathcal Q_{d-1}(\pi^* \alpha) &= 0\label{eq:casedstep1a}\\
 \mathcal G_{d-1}(\pi^*\alpha)&=0.\label{eq:casedstep1b}
 \end{align}
Now consider
$$ g(t) := \mathcal G_{d-1}(\pi^*\alpha; U\langle t\pi^*h\rangle)$$ 
so by the above  $g(0)=0$.  On the other hand $U\langle t\pi^*h \rangle$ is nef for $|t|\ll 1$, so Lemma \ref{lem:mathcalGnonpositive} implies $g(t)\le 0$ for all $|t|\ll 1$.  Hence
\begin{equation}\frac{dg}{dt}|_{t=0} =0.\label{eq:diffgiszero}\end{equation}
\begin{lemma}\label{lem:derivativeg}
We have
\begin{equation}\mathcal Q_{d-2}(\pi^* \alpha) \mathcal Q_{d-2}(\zeta) = \mathcal Q_{d-2}(\pi^* \alpha,\zeta)^2\label{eq:derivativeg}\end{equation}
\end{lemma}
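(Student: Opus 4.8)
The plan is to extract \eqref{eq:derivativeg} from the vanishing of $g'(0)$ already recorded in \eqref{eq:diffgiszero}, by expanding $g$ to first order in $t$ and then simplifying with the Chern identities of Lemma~\ref{lem:assumption:chernclass}. Write $\beta:=\pi^*\alpha$ and abbreviate the forms attached to $U$ by dropping the bundle from the notation. Applying Lemma~\ref{lem:Omegaiexpansion} to each factor of $\mathcal G_{d-1}(\beta;U\langle t\pi^*h\rangle)$ — noting that the coefficient $(d-i+1)$ equals $2$ for $i=d-1$ and $1$ for $i=d$, and that $\phi_{d-1}(\,\cdot\,;U')=\mathcal Q_d(\pi^*h,\,\cdot\,;U')$ — I would record the first-order expansions $\mathcal Q_{d-1}(\beta;U\langle t\pi^*h\rangle)=\mathcal Q_{d-1}(\beta)+2t\,\mathcal Q_{d-2}(\beta)+O(t^2)$, $\mathcal Q_{d-1}(\beta,\pi^*h;U\langle t\pi^*h\rangle)=\mathcal Q_{d-1}(\beta,\pi^*h)+2t\,\mathcal Q_{d-2}(\beta,\pi^*h)+O(t^2)$, $\phi_{d-1}(\pi^*h;U\langle t\pi^*h\rangle)=\phi_{d-1}(\pi^*h)+t\,\mathcal Q_{d-1}(\pi^*h)+O(t^2)$, and $\phi_{d-1}(\beta;U\langle t\pi^*h\rangle)=\phi_{d-1}(\beta)+t\,\mathcal Q_{d-1}(\beta,\pi^*h)+O(t^2)$.

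Differentiating the product $\mathcal G_{d-1}=\mathcal Q_{d-1}(\beta)\,\phi_{d-1}(\pi^*h)-2\,\mathcal Q_{d-1}(\beta,\pi^*h)\,\phi_{d-1}(\beta)$ at $t=0$ and inserting the two vanishings already available — namely $\mathcal Q_{d-1}(\beta)=0$ from \eqref{eq:casedstep1a} and $\phi_{d-1}(\beta)=\mathcal Q_d(\beta,\pi^*h)=0$ from the hypothesis \eqref{eq:laststepsetup} — every term carrying a factor $\mathcal Q_{d-1}(\beta)$ or $\phi_{d-1}(\beta)$ drops out, leaving
\[
0=g'(0)=2\,\mathcal Q_{d-2}(\beta)\,\phi_{d-1}(\pi^*h)-2\,\mathcal Q_{d-1}(\beta,\pi^*h)^2,
\]
i.e. $\mathcal Q_{d-2}(\pi^*\alpha)\,\phi_{d-1}(\pi^*h)=\mathcal Q_{d-1}(\pi^*\alpha,\pi^*h)^2$.

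It then remains to identify $\phi_{d-1}(\pi^*h)$ with $\mathcal Q_{d-2}(\zeta)$ and $\mathcal Q_{d-1}(\pi^*\alpha,\pi^*h)$ with $\mathcal Q_{d-2}(\pi^*\alpha,\zeta)$; this is the only bundle-theoretic step, and the one place the hypothesis $n\ge d+2$ enters. For the first, $\phi_{d-1}(\pi^*h)=\mathcal Q_d(\pi^*h)=\int_C c_{n-2}(U)(\pi^*h)^2$, and the double identity \eqref{eq:chernhigherUdouble} applied to $\Omega=h^2\in H^{2,2}(X)$ (so $j+p=2+(n-2)=n\ge d+2$) turns $c_{n-2}(U)(\pi^*h)^2$ into $c_{n-4}(U)\zeta^2(\pi^*h)^2$, whose integral over $C$ is $\mathcal Q_{d-2}(\zeta)$. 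For the second, $\mathcal Q_{d-1}(\pi^*\alpha,\pi^*h)=\int_C(\pi^*\alpha)c_{n-3}(U)(\pi^*h)^2$, and the single identity \eqref{eq:chernhigherU} applied to $\Omega=\alpha h^2\in H^{3,3}(X)$ (so $j+p=n\ge d+1$) replaces $c_{n-3}(U)$ by $c_{n-4}(U)\zeta$, giving $\mathcal Q_{d-2}(\pi^*\alpha,\zeta)$. Substituting these into the displayed identity yields exactly \eqref{eq:derivativeg}.

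The differentiation is routine once the expansions are in hand, and no positivity beyond Lemma~\ref{lem:mathcalGnonpositive} (which gives $g(t)\le0$ near $t=0$, hence $g'(0)=0$) is needed. I expect the only real hazard to be organizational: tracking the precise Chern index $n-d+i-2$ and power $(\pi^*h)^{d-i}$ in each $\mathcal Q_i$ and $\phi_i$, and confirming that the degree inequalities $j+p\ge d+1$ and $j+p\ge d+2$ permitting \eqref{eq:chernhigherU} and \eqref{eq:chernhigherUdouble} indeed hold — both of which follow from $n\ge d+2$.
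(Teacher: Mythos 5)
Your argument is correct and follows the paper's proof essentially verbatim: the same first-order expansions via Lemma~\ref{lem:Omegaiexpansion}, the same use of \eqref{eq:casedstep1a} and \eqref{eq:laststepsetup} to kill the unwanted terms in $g'(0)$, and the same applications of \eqref{eq:chernhigherU} and \eqref{eq:chernhigherUdouble} to identify $\mathcal Q_d(\pi^*h)$ with $\mathcal Q_{d-2}(\zeta)$ and $\mathcal Q_{d-1}(\pi^*\alpha,\pi^*h)$ with $\mathcal Q_{d-2}(\pi^*\alpha,\zeta)$. All coefficients and degree checks are right.
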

\begin{proof}
We need an elementary computation of the derivative of $g$.    First we have
\begin{align*}
\mathcal Q_{d-1}(\pi^*\alpha; U\langle t\pi^*h\rangle) &= \mathcal Q_{d-1}(\pi^*\alpha) + 2t \mathcal Q_{d-2}(\pi^*\alpha)+ O(t^2) \tag{ by Lemma \ref{lem:Omegaiexpansion}} \\
&= 2t \mathcal Q_{d-2}(\pi^*\alpha)+ O(t^2) \tag{ by \eqref{eq:casedstep1a}}\\
\mathcal Q_{d-1}(\pi^*\alpha,\pi^*h; U\langle t\pi^*h\rangle) &= \mathcal Q_{d-1}(\pi^*\alpha,\pi^*h) + O(t)\\
\phi_{d-1}(\pi^* \alpha; U\langle t\pi^*h\rangle) &=  \mathcal Q_{d}(\pi^*\alpha,\pi^*h; U\langle t\pi^*h\rangle)\tag{ by definition of $\phi_{d-1}$}\\
&=\mathcal Q_{d}(\pi^*\alpha,\pi^*h) +  t \mathcal Q_{d-1}(\pi^*\alpha,\pi^*h) +O(t^2)  \tag{ by Lemma \ref{lem:Omegaiexpansion}}\\
&=  t \mathcal Q_{d-1}(\pi^*\alpha,\pi^*h) +O(t^2)  \tag{ by \eqref{eq:laststepsetup}}\\
\phi_{d-1}(\pi^*h; U\langle t\pi^*h\rangle)&=
\mathcal Q_{d}(\pi^*h; U\langle t\pi^*h\rangle)\tag{ by definition of $\phi_{d-1}$}\\
&=\mathcal Q_{d}(\pi^*h) +O(t)  \tag{ by Lemma \ref{lem:Omegaiexpansion}}
\end{align*}
So
\begin{align*}
g(t)=\mathcal G_{d-1}(\pi^*\alpha; U\langle t\pi^*h \rangle) &= 
\mathcal Q_{d-1} (\pi^*\alpha; U\langle t\pi^*h \rangle)  
\phi_{d-1}(\pi^*h;U\langle t\pi^*h \rangle) \\
&\ \ \ \ - 2\mathcal Q_{d-1} (\pi^*\alpha,\pi^*h; U\langle t\pi^*h \rangle)  \phi_{d-1}(\pi^*\alpha;U\langle t\pi^*h \rangle)\\
& = 2t\mathcal Q_{d-2}(\pi^*\alpha) \mathcal Q_d(\pi^*h) -2t\mathcal Q_{d-1}(\pi^*\alpha,\pi^*h)^2 + O(t^2)
\end{align*}
Thus \eqref{eq:diffgiszero} implies
\begin{equation} \mathcal Q_{d-2}(\pi^* \alpha) \mathcal Q_d(\pi^*h ) = \mathcal Q_{d-1}(\pi^*\alpha,\pi^*h)^2.\label{eq:conclusiondiffg}\end{equation}
We manipulate this as follows:
\begin{align}
\mathcal Q_d(\pi^* h) &= \int_{C} c_{n-2}(U) (\pi^*h)^2 \\
&=\int_{C} c_{n-4}(U)\zeta^2  (\pi^*h)^2  \tag{by \eqref{eq:chernhigherUdouble}}\nonumber\\
&= \mathcal Q_{d-2}(\zeta)\label{eq:conclusiondiffg2}
\end{align}
and
\begin{align}
\mathcal Q_{d-1}(\pi^*\alpha,\pi^*h) & = \int_C c_{n-3}(U)(\pi^*\alpha)(\pi^*h)^2\nonumber \\
&= \int_C c_{n-4}(U) \zeta (\pi^*\alpha) (\pi^* h)^2 \tag {by \eqref{eq:chernhigherU}}\nonumber \\
& = \mathcal Q_{d-2}(\zeta,\pi^*\alpha).\label{eq:conclusiondiffg3}
\end{align}
Combining \eqref{eq:conclusiondiffg} and \eqref{eq:conclusiondiffg2} and \eqref{eq:conclusiondiffg3} gives \eqref{eq:derivativeg}.
\end{proof}

\begin{proof}[Completion of proof of Proposition \ref{prop:finalstep}]
Observe that a futher consequence of \eqref{eq:conclusiondiffg2} is that
$$\mathcal Q_{d-2}(\zeta) = \mathcal Q_d(\pi^*h) >0$$
where the last inequality uses \eqref{eq:thmfl}.

Now our assumption that $(A_{d-2})$ holds means that $\mathcal Q_{d-2}$ has the Hodge-Riemann property.    Thus the Hodge-Index inequality (Definition-Lemma \ref{deflemma:HRofquadraticform}(4)) yields
$$\mathcal Q_{d-2}(\beta) \mathcal Q_{d-2}(\zeta) \le \mathcal Q_{d-2}(\beta,\zeta)^2 \text{ for all } \beta\in H^{1,1}(P;\mathbb R)$$
with equality if and only if $\beta$ is proportional to $\zeta$.  

But \eqref{eq:derivativeg} says precisely that equality holds when $\beta$ is replaced by $\pi^*\alpha$, and thus we must have that $\pi^* \alpha$ is proportional to $\zeta$.  But this is only possible if $\pi^*\alpha=0$ which implies $\alpha=0$ completing the proof.
\end{proof}

\section{The Hodge-Riemann Property for Schur Classes}\label{sec:Schur}

\subsection{Schur classes}\label{subsec:Schur}

We next apply the main result of the previous section to certain cone classes that recover the Schur classes of our ample vector bundle.  The first part of this material is standard, and can mostly be found in \cite{Lazbook2}, and entirely in \cite{FultonIT}.   For completeness we show how this works.

Let $X$ be projective of dimension $d\ge 4$ and $E$ be a vector bundle on $X$ of rank 
$$e := \rank(E) \ge 2.$$
Let $$0\le \lambda_N\le \lambda_{N-1}\le \cdots \le \lambda_1$$
be a partition of  $b$ with $\lambda_1\le e$, $1\le b\le N$ and $b\le d$.  (For our purposes $N$ may be taken to be $b$, but we prefer to look at the more general situation $N\ge b$.)
In particular
$$|\lambda|:=\sum_{i=1}^N \lambda_i = b$$
and
$$0\le \lambda_i\le\min(e, b) \text{ for all  } i=1,\ldots,N.$$ 
Set
$$ a_i:= e+i-\lambda_i.$$
Then 
$a_1=e+1-\lambda_1\ge 1,$
$ a_{i+1} = e + (i+1) - \lambda_{i+1} \ge e + i - \lambda_i + 1 = a_i+1,$
$ a_i = e + i - \lambda_i\ge e+i - e \ge i$
and
$ a_N = e+N-\lambda_N \le e+N.$
Fix a real vector space $V$ of dimension 
$$\dim V = e+N.$$
The above inequalities say we may fix a nested subsequence $A$ of subspaces
$$ 0\subsetneq A_1\subsetneq A_{2}\subsetneq \cdots \subsetneq A_{N} \subset V$$
with
$$\dim(A_i) = a_i.$$
Define
$$ F: = V^*\otimes E= \Hom(V\otimes \mathcal O_X, E)$$
Letting $\rank(F) = f+1$ we then have  
$$ f  = (e+N)e-1\ge 2b+3.$$  
Inside $F$ define
\begin{equation}\label{eq:defhatC}
 \hat{C}: = \{ \sigma\in F_x : \dim( \operatorname{ker}(\sigma(x)) \cap A_i ) \ge i \text{ for 
all $i=1,\ldots, N$ and $x\in X$}\}
\end{equation}
which is a cone in $F$.  Now set
$$ P: = \mathbb P_{\sub}(F) \ {\text 
{and} } \  C = [\hat{C}] \subset P.$$

\begin{proposition}\label{prop:conebasics}
\begin{enumerate}[(a)] 
\item $C$ has codimension $b$ and dimension $n :=\dim C =  f+d-b$.  
\item $C$ is locally a product over $X$.
\item $C$ has irreducible fibers over $X$.  
\item We have
$$\pi_* c_{f}(U|_C) = s_{\lambda} (E),$$
where $U$ denotes the universal quotient bundle on $P$ as in Section \ref{sec:cone_classes}. 
\end{enumerate}
\end{proposition}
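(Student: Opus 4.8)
The plan is to reduce the geometric statements (a)--(c) to the fiberwise structure of the cone and to deduce (d) from the standard formula for the class of a degeneracy locus. The key first observation is that the condition defining $\hat C$ in \eqref{eq:defhatC} is equivalent, by rank--nullity, to the nested rank bounds
$$\rank\big(\sigma|_{A_i}\big)\le a_i-i=e-\lambda_i\qquad(i=1,\dots,N),$$
so that $\hat C$ is a relative Schubert-type determinantal variety in $F$.

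For (b) I would trivialize $E$ over an open $Y\subseteq X$, giving $F|_Y\cong Y\times \Hom(V,\C^e)$; since the defining conditions involve only the fixed flag $A_\bullet\subset V$ and the fiberwise value of $\sigma$, the cone restricts to $\hat C|_Y\cong Y\times D$ for the universal cone $D=\{\sigma:\rank(\sigma|_{A_i})\le e-\lambda_i\ \forall i\}\subset\Hom(V,\C^e)$, and passing to $\mathbb P_{\sub}$ gives $C|_Y\cong Y\times \mathbb P_{\sub}(D)$. This is the local product structure, and in particular $C\to X$ is flat. Thus (a) and (c) reduce to showing that $D$ is irreducible of codimension $b$ in $\Hom(V,\C^e)$. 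For this I would invoke the Kempf--Laksov resolution: over the partial flag variety of nested subspaces $B_1\subseteq\cdots\subseteq B_N\subset\C^e$ with $\dim B_i=e-\lambda_i$ (a valid flag since $\lambda_i$ is nonincreasing), form $\widetilde D=\{(\sigma,B_\bullet):\sigma(A_i)\subseteq B_i\ \forall i\}$. Projection to the flag variety realizes $\widetilde D$ as a vector bundle, so $\widetilde D$ is smooth and irreducible, while projection to $\Hom(V,\C^e)$ is proper and birational onto $D$. A dimension count in $\widetilde D$ then gives $\operatorname{codim} D=b=|\lambda|$, so $C$ has codimension $b$ and $\dim C=d+f-b=n$, proving (a), and the fibers $\mathbb P_{\sub}(D)$ are irreducible, proving (c).

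The content of the proposition is (d). The idea is that $c_f(U|_C)$, the localized top Chern class of the rank $f$ quotient $U$, is represented by the zero locus of a section: a class $s\in H^0(X,F)=\Hom(V,H^0(E))$ induces $\bar s:=(\pi^*F\to U)\circ \pi^*s\in H^0(P,U)$, and over $C$ its zero scheme is $\{(\langle s_x\rangle,x):s_x\in\hat C_x\}$, which $\pi$ maps isomorphically onto the degeneracy locus $Z=\{x:\rank(s_x|_{A_i})\le e-\lambda_i\}\subset X$. For $s$ sufficiently general $Z$ has the expected codimension $b$ and, by the Kempf--Laksov formula (the Giambelli form of Thom--Porteous), $[Z]=s_\lambda(E)$ in exactly the determinantal form stated in Theorem \ref{thm:intromain}; hence $\pi_*c_f(U|_C)=s_\lambda(E)$. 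Since this is ultimately an identity of Chern classes, I would make the argument rigorous independently of the existence of generic global sections of $E$ by carrying out the same computation universally on the resolution $\widetilde D$ and pushing forward, following \cite{FultonLazarsfeld} and \cite[Ch.~14]{FultonIT}.

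The hard part will be this last identification: matching the two-step pushforward of $c_f(U)$ --- first along the Kempf--Laksov resolution and then along $\pi$ --- with the Giambelli determinant, and in particular verifying that the index shifts built into $a_i=e+i-\lambda_i$ reproduce precisely the determinantal formula for $s_\lambda(E)$. The remaining parts are either a fiberwise triviality, as in (b), or classical facts about Schubert determinantal varieties, as in (a) and (c).
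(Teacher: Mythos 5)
Your treatment of (a)--(c) is sound and is essentially the paper's: both arguments reduce, via local triviality of $E$, to the universal determinantal cone $D\subset\Hom(V,\C^e)$, whose irreducibility and codimension $b$ the paper simply quotes from \cite[Lemma A.7.2]{FultonIT} while you re-derive them from the Kempf--Laksov resolution over the partial flag variety. That is the same circle of ideas and is fine.

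Part (d) is where the real content lies, and there your proposal has a genuine gap. Your primary argument needs a section $s\in H^0(X,F)=\Hom(V,H^0(E))$ whose induced section $\bar s$ of $U|_C$ is regular; but an ample bundle need not be globally generated, and $H^0(E)$ may well be too small (or zero) for any such $s$ to exist, in which case $\bar s$ is not regular and $c_f(U|_C)$ is not represented by its zero scheme. You acknowledge this and propose to ``carry out the same computation universally on the resolution $\widetilde D$,'' but that is exactly the step you then defer as ``the hard part'': the identification of the two-step pushforward of $c_f(U)$ (through the Kempf--Laksov resolution and then along $\pi$) with the Giambelli determinant \emph{is} statement (d), so as written the proof of (d) is circular-by-deferral rather than complete. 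The missing idea, which is how the paper closes this, is to apply the degeneracy-locus formalism not to a section of $E$ on $X$ but to the tautological homomorphism $u:\pi^*V\to\pi^*E$ on the total space of $F$, where no genericity is needed: $\hat C=\Omega(\pi^*A;u)$, and pulling back along the zero section $t:X\to F$ via the refined Gysin map gives $t^!\hat C=\boldsymbol{\Omega}(A;\sigma)=s_\lambda(E)$ by \cite[Remark 14.3, Theorem 14.3(a)]{FultonIT} (Fulton's theorem is already stated universally, precisely to avoid transversality hypotheses). The remaining identity $t^!\hat C=\pi_*(c_{f+1}(U')|_{C'})=\pi_*(c_f(U)|_C)$ is then the cone-class formula \cite[Proposition 3.3]{FultonIT} applied to the projective completion $\P_{\sub}(F\oplus\C)$, using that no fiber of $\hat C$ lies in the zero section. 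If you want to complete your version instead, you must actually perform the flag-bundle pushforward; citing the Kempf--Laksov \emph{formula} for a generic degeneracy locus does not substitute for it in the absence of sections.
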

\begin{proof}
All of this is standard (e.g. \cite[(8.12)]{Lazbook2} which is written for the case $|\lambda|=d$ but that makes no essential difference).  For completeness we show precisely where this is contained in \cite{FultonIT} (much of which is merely a translation of notation).

Let $\pi:F=V^*\otimes E\to X$ be the projection and consider the tautological section
$$ u: \pi^*V \to \pi^* E.$$
Then
\begin{align*}\hat{C}: &= \{ \sigma\in F_x : \dim( \operatorname{ker}(\sigma(x) \cap A_i ) \ge i \text{ for 
all $i$ }\}\\
&= \{ \sigma\in F : \dim \operatorname{ker}(u(\sigma)) \cap \pi^*A_i \ge i \text { for all $i$ }\}.
\end{align*}
So, in the notation of \cite[p243 and Remark 14.3]{FultonIT} our $\hat{C}$ is written as
$$\hat{C} = \Omega(\pi^* A; u).$$

Now, since $E$ and $F$ are locally trivial, one sees that $\hat{C}$ is locally a product, with fibre $Z$ given by the case that $X$ is a single point.  This is the ``universal case" discussed in  \cite[p250, final paragraph]{FultonIT}   and in \cite[Lemma A.7.2]{FultonIT} is the precise statement that implies $Z$ is irreducible and of codimension
 $$\sum_{i=1}^N e-a_i +i =\sum_{i=1}^N \lambda_i = |\lambda| = b.$$


Next let $t:X\to F$ be the zero section, which we think of as a regular embedding of $X$ of codimension $f+1$.  Then $\sigma: = t^*u$ is the zero section $\sigma:V\to E$.  Then using \cite[Remark 14.3]{FultonIT} 
$$\boldsymbol{\Omega}(A;\sigma) = t^{!} [ \Omega(p^*A;u)] = t^{!}(\hat{C})$$
and then using \cite[Theorem 14.3(a)]{FultonIT} gives
$$ t^{!}(\hat{C}) =  \boldsymbol{\Omega}(A;\sigma) = s_{\lambda}(E)$$
where $$t^{!}: A_*(F)\to A_{*-f-1}(X)$$
is the Gysin morphism, as defined in \cite[Section 6.2]{FultonIT}.  Since we have changed notation from that in \cite{FultonIT} we include the following table as a guide.

\begin{table}[h]
\begin{tabular}{|L|L|}
\hline
  \text{\cite[Rmk.\ 14.3]{FultonIT}} & \text{This paper} \\
 \hline
 d & N\\
 n & d\\
 \lambda_i & \lambda_i\\  
 h &  b\\ 
 a_i & a_i\\
 A_i & A_i \\
 \underline{A} & A\\
 F & E\\
 f & e\\
 E &  V\\
 e &  e+N\\
 H & F \\
 p & \pi\\
 \sigma & \sigma\\
 t_{\sigma} & t\\
 \hline
 \end{tabular}
\end{table}


The point finally is that since $t$ is the zero section we can express $t^{!}(\hat{C})$ as the pushforward of the top Chern class of the tautological bundle on $\mathbb P(F)$ restricted to $\hat{C}$.  To see this let
$$\pi: P'= \mathbb P_{\sub}(F\oplus \mathbb C)\to X$$
be the projective completion of $F$, with universal quotient bundle $U'$ which has rank $f+1$.     Let $C'$ be the closure of $\hat{C}$ inside $P'$.  Then $C'$ has the property that the restriction of $C'$ to $F\subset P'$ is equal to $\hat{C}$.  So \cite[Proposition 3.3]{FultonIT} gives
$$t^! \hat{C} = \pi_*(c_{f+1}(U')|_{C'})$$
(we observe that the cited work states this formula for $t^*\hat{C}$, but that is equal to the Gysin morphism $t^{!}$ in this case, see \cite[Remark 6.2.1]{FultonIT}).

Thus in total we have
$$ s_{\lambda}(E) = \pi_*(c_{f+1}(U')|_{C'})$$

Now clearly each fiber of $\hat{C}$ is not contained in the zero section (for dimension reasons alone).   So by \cite[Proof of Corollary 8.1.14]{Lazbook2}, if $U$ denotes the tautological bundle on $\mathbb P(F)$ then
$$ \pi_*(c_{f+1}(U')|_{C'} = \pi_* (c_{f}(U)|_{C})$$
and the proof of (d) is complete since   $n=f+d-b$. 
\end{proof}


\subsection{An extension}
We now extend this to both the derived Schur classes $s_{\lambda}^{(i)}$ from Definition \ref{def:lowerschur} and also to the case of $\mathbb R$-twisted bundles.    As in the previous section suppose   $d\ge 4$, 
$e  \ge 2$, 
$0\le \lambda_N\le \lambda_{N-1}\le \cdots \le \lambda_1\le e$, $1\le b\le N$ and $b\le d$.

Let $E' =  E\langle \delta \rangle$ be an $\mathbb R$-twisted bundle, where $E$ is a vector bundle and $\delta\in H^{1,1}(X;\mathbb R)$.  Recall we identify $P':=\mathbb P_{\sub}(E')$ with $P= \mathbb P_{\sub}(E)$
and if $U$ is the universal quotient bundle on $P$ then the universal quotient bundle on $P'$ is defined to be 
$$U': = U\langle \pi^* \delta \rangle.$$
 Now consider the same cone  
    $$[C]\subset P' = P$$
    as in \eqref{eq:defhatC}. As before $n:=\dim C=f+d-b$ and $ f  = (e+N)e-1$.

\begin{proposition}
\label{prop:pushforwardlower} 
Under the above notation, for $0\le i\le b$  it holds that
\begin{equation}\pi_* c_{f-i}(U'|_C)= s_{\lambda}^{(i)}(E') \label{eq:pushforwardlower}\end{equation}\end{proposition}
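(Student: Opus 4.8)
The plan is to deduce \eqref{eq:pushforwardlower} from the untwisted top-degree formula of Proposition \ref{prop:conebasics}(d) by two elementary geometric operations: tensoring $E$ by a line bundle realises the twist $\langle\delta\rangle$, and forming an exterior product with $\mathcal O_{\mathbb{P}^i}(1)$ on $X\times\mathbb{P}^i$ isolates the $i$-th derived class. The decisive feature is that integrating over the $\mathbb{P}^i$-factor selects, \emph{simultaneously}, the $c_{f-i}$-term on the left and the $s_\lambda^{(i)}$-term on the right, so that no cancellation of a factor $\delta^i$ is ever required.

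First I would establish \eqref{eq:pushforwardlower} when $\delta=c_1(L)$ for an ample line bundle $L$. Let $p\colon X\times\mathbb{P}^i\to X$ be the projection, set $\tau:=c_1(\mathcal O_{\mathbb{P}^i}(1))$ (pulled back to the product), and apply Proposition \ref{prop:conebasics}(d) to the honest bundle $G:=(E\otimes L)\boxtimes\mathcal O_{\mathbb{P}^i}(1)=p^*E'\langle\tau\rangle$, obtaining $\Pi_*c_f(\mathcal U|_{\mathcal C})=s_\lambda(G)$, where $\mathcal C$, $\mathcal U$, $\Pi$ are the cone, universal quotient and projection attached to $G$. Because $G$ differs from $p^*E$ only by a line-bundle twist, and the kernel conditions \eqref{eq:defhatC} defining the cone are insensitive to such twists, one has the identifications $\mathcal C=C\times\mathbb{P}^i$ and $\mathcal U|_{\mathcal C}=p_C^*(U'|_C)\langle\tau\rangle$, where $U'=U\langle\pi^*\delta\rangle$ and $p_C\colon C\times\mathbb{P}^i\to C$. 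Since $\rank U'=f$, the twisted Chern formula \eqref{eq:chernclasstensorproduct:full} gives $c_f\big(p_C^*(U'|_C)\langle\tau\rangle\big)=\sum_j p_C^*c_{f-j}(U'|_C)\,\tau^j$, and likewise $s_\lambda(G)=\sum_j p^*s_\lambda^{(j)}(E')\,\tau^j$. I would then push the identity $\Pi_*c_f(\mathcal U|_{\mathcal C})=s_\lambda(G)$ forward to $X$ along $p$ (noting $p\circ\Pi=\pi\circ p_C$): using $p_{C*}(\tau^j)=0$ for $j<i$, $p_{C*}(\tau^i)=1$ and $\tau^j=0$ for $j>i$, only the $j=i$ term survives on each side, yielding $\pi_*c_{f-i}(U'|_C)=s_\lambda^{(i)}(E')$ for integral $\delta$.

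Finally I would remove the integrality hypothesis. By \eqref{eq:chernclasstensorproduct:full} the assignment $\delta\mapsto\pi_*c_{f-i}(U\langle\pi^*\delta\rangle|_C)$ is a polynomial map $H^{1,1}(X;\mathbb R)\to H^{b-i,b-i}(X;\mathbb R)$, and by \eqref{eq:lowercherntwistexpansion} so is $\delta\mapsto s_\lambda^{(i)}(E\langle\delta\rangle)$; the previous step shows these two maps agree on all integral ample classes, which are Zariski-dense in $H^{1,1}(X;\mathbb R)$, hence they agree for every $\delta$, and evaluating at the prescribed $\delta$ finishes the proof. I expect the main obstacle to be the geometric bookkeeping of the second paragraph, i.e.\ checking that the cone construction commutes with pullback to $X\times\mathbb{P}^i$ and with line-bundle twists so that $\mathcal C=C\times\mathbb{P}^i$ and $\mathcal U|_{\mathcal C}=p_C^*(U'|_C)\langle\tau\rangle$; once these identifications are secured, the remainder is a routine Chern-class computation combined with integration over $\mathbb{P}^i$.
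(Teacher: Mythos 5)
Your reduction to the integral case is correct and is genuinely different from the paper's. The paper also passes to a product with projective space and twists by a line bundle, but it does so in order to force $b\le d/2$, and it then separates the coefficients of $\sum_i(\pi_*c_{f-i}(U)-s_\lambda^{(i)}(E))\cdot\epsilon^i=0$ by a Vandermonde argument in the twisting parameter, after which it must still cancel the factor $\epsilon^i$ using Hard Lefschetz. Your device of working on $X\times\mathbb{P}^i$ with $G=(E\otimes L)\boxtimes\mathcal O_{\mathbb{P}^i}(1)$ and integrating over the $\mathbb{P}^i$-factor extracts the $i$-th coefficient on both sides simultaneously, so neither the Vandermonde step nor the Hard Lefschetz cancellation is needed; this is a real simplification. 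The geometric identifications you flag as the main obstacle ($\mathcal C=C\times\mathbb{P}^i$ and $\mathcal U|_{\mathcal C}=p_C^*(U'|_C)\langle\tau\rangle$) are exactly the base-change and line-bundle-twist compatibilities of the cone construction that the paper itself states and uses, so they are unproblematic.

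The one step that does not work as written is the final de-integralization. Integral ample classes are dense only in the ample cone of $N^1(X)_{\mathbb R}=NS(X)\otimes\mathbb R$, which on a general projective manifold is a \emph{proper} subspace of $H^{1,1}(X;\mathbb R)$ (consider a projective K3 surface of Picard rank one, where $h^{1,1}=20$ but the N\'eron--Severi rank is $1$); so two polynomial maps on $H^{1,1}(X;\mathbb R)$ that agree on all integral ample classes need only agree on $N^1(X)_{\mathbb R}$, not on all of $H^{1,1}(X;\mathbb R)$. The repair costs nothing: run your second paragraph with $L=\mathcal O_X$ (ampleness of $L$ is never used there) to obtain $\pi_*c_{f-k}(U|_C)=s_\lambda^{(k)}(E)$ for every $0\le k\le b$, and then note that the two polynomial expansions you already wrote down, coming from \eqref{eq:chernclasstensorproduct:full} and \eqref{eq:lowercherntwistexpansion}, have coefficient of $\delta^m$ equal to $\binom{i+m}{m}\pi_*c_{f-i-m}(U|_C)$ and $\binom{i+m}{m}s_\lambda^{(i+m)}(E)$ respectively; the untwisted case for all $k$ makes these coincide term by term, hence the identity holds for every real $\delta$. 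This is precisely the formal computation with which the paper closes its own proof, so your argument is complete once the density claim is replaced by this coefficient comparison.
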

\begin{proof}
We prove this first in the case $\delta=0$, so $E'=E$ and $U'=U$. 
We note first that the construction of the cone $C=C(X, E,\lambda,V,(A_i)_i)$ over $X$ in section \ref{subsec:Schur} depending on  $E$, $\lambda$, $V$ and $ (A_i)_i$ commutes with base change, that is if $\phi:\tilde{X}\to X$ is a morphism between projective manifolds then the cone $\tilde{C}=C(\tilde X,\phi^*(E),\lambda,V,(A_i)_i)$ over $\tilde{X}$ sits in a cartesian square
 $$
  \xymatrix{ \tilde C \ar[r]^{\psi}\ar[d]^{\tilde{\pi}} 
& C \ar[d] ^{\pi}\\
\tilde{X}\ar[r]^{\phi} & X.
 }
  $$ 
  If we can prove the desired formula \eqref{eq:pushforwardlower} for $\phi^*(E)$ over $\tilde{X}$ and if $\phi$ is flat and such that $\phi^*:H^*(X,\Q)\to H^*(\tilde{X},\Q)$ is injective, then the formula will be also valid for $E$ over $X$. Indeed, we would have
  $$ \phi^*(\pi_* c_{f-i}(U|_C)-s_{\lambda}^{(i)}(E))= \tilde{\pi}_*\psi^*(c_{f-i}(U|_C))-s_{\lambda}^{(i)}(\phi^*E)=$$ 
  $$\tilde{\pi}_*(c_{f-i}((\psi^*U)|_{\tilde C}))-s_{\lambda}^{(i)}(\phi^*E)= s_{\lambda}^{(i)}(\phi^*E)-s_{\lambda}^{(i)}(\phi^*E)=0
  $$
We thus may reduce ourselves, as we will, to the situation when $d\ge 2b$, by taking $\tilde{X} = X\times \P^b$, for instance.  
 
 Take now an ample integer class $\epsilon\in H^{1,1}(X;\mathbb Q)$.  Then using the projection formula
\begin{equation}\pi_* c_{f}(U\langle \pi^* \epsilon \rangle|_C\rangle) = \pi_*\left( \sum_{i=0}^{f} c_{f-i}(U) (\pi^*\epsilon)^i\right) =  \sum_{i=0}^{b} (\pi_* c_{f-i}(U)).\epsilon^i.\label{eq:proppushfowardlower1}\end{equation}
(Note that for dimension reasons $\pi_* c_{f-i}(U|_C)=0$ if $i>b$.)
On the other hand replacing $E$ by $E\otimes \mathcal O(\epsilon)$ does not change $P,C$ but has the effect of replacing $U$ by $U\otimes \mathcal O(\epsilon)$.   Since $U\otimes \mathcal O(\epsilon)$ is a genuine bundle (not $\mathbb R$-twisted),   Proposition \ref{prop:conebasics}(d) applies to give
\begin{equation}\pi_* c_{f}(U\langle \pi^*\epsilon\rangle)  = \pi_* c_{f}(U\otimes \pi^*\mathcal O(\epsilon)) =  s_{\lambda}(E\otimes \mathcal O(\epsilon)) = \sum_{i=0}^{b} s_{\lambda}^{(i)}(E). \epsilon^i\label{eq:proppushfowardlower2}\end{equation}
where the last equation uses the definition of $s_{\lambda}^{(i)}$.  Comparing  \eqref{eq:proppushfowardlower1} and \eqref{eq:proppushfowardlower2} yields 
$$\sum_{i=0}^{b} (\pi_* c_{f-i}(U)-s_{\lambda}^{(i)}(E)).\epsilon^i=0$$ and replacing $\epsilon$ by $t\epsilon$ also
$$\sum_{i=0}^{b} t^i(\pi_* c_{f-i}(U)-s_{\lambda}^{(i)}(E)).\epsilon^i=0$$
for any $t\in\{0,...,b\}$. (Here we set $0^0$  to be $1$.) The Vandermonde matrix $(t^i)_{0\le i,t\le b}$ being invertible,
 we find   
 $$(\pi_* c_{f-i}(U)).\epsilon^i=(s_{\lambda}^{(i)}(E)).\epsilon^i $$
 for all $i\in\{0,...,b\}$ and formula \eqref{eq:pushforwardlower} follows now by Hard Lefschetz, since $b$ was supposed not to exceed $d/2$.

The result for general $\delta$ is now given by the following formal computation:
\begin{align*}
\pi_* c_{f-i}(U'|_C) &= \pi_* c_{f-i}(U\langle \pi^* \delta \rangle|_C)  \\
 \tag{by \eqref{eq:chernclasstensorproduct:full}}\\
&= \sum_{j=0}^{f-i} \binom{i+j}{j} \pi_* c_{f-i-j}(U) \delta^{j}  \tag{by \eqref{eq:chernclasstensorproduct:full}}\\
& = \sum_{j=0}^{b-i} \binom{i+j}{j} s_{\lambda}^{(i+j)}(E) \delta^{j}  
\\
& = \sum_{k=i}^{b} \binom{k}{i} s_{\lambda}^{(k)}(E) \delta^{k-i}  \\
& = s_{\lambda}^{(i)}(E\langle \delta \rangle) \tag{by \eqref{eq:lowercherntwistexpansion}}
\end{align*}
\end{proof}

\subsection{Proof of Hodge-Riemann Property for Schur classes}

\begin{theorem} \label{thm:schurhodgeriemann}
Let $X$ be a projective manifold of dimension $d\ge 2$ and $E$ an ample $\mathbb R$-twisted vector bundle of rank $e$.  Let $\lambda$ be a partition of $d-2$ with $0\le \lambda_N\le \lambda_{N-1}\le \cdots \le \lambda_1\le e$.    Then for any ample class $h$ and any $0\le i\le d-2$,
$$s_{\lambda}^{(i)}(E).h^{i}\in H^{d-2,d-2}(X;\mathbb R)$$
has the Hodge-Riemann property with respect to $h$.  

In particular, applying this when $i=0$, the Schur class $s_{\lambda}(E)$ has the Hodge-Riemann property.
\end{theorem}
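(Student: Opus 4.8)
The plan is to identify the bilinear form attached to $s_{\lambda}^{(i)}(E)h^i$ with one of the cone-class forms $\mathcal{Q}_{i'}$ controlled by Theorem \ref{thm:maincone}, so that the Hodge-Riemann property can simply be read off from there. Before doing so I would dispose of the small-dimensional cases, since the cone construction of \S\ref{subsec:Schur} and Theorem \ref{thm:maincone} require $d\ge 4$. When $d=2$ the only partition of $d-2=0$ is empty, the class is $1\in H^{0,0}(X)$, and the associated form is the intersection pairing on $H^{1,1}(X)$, so the statement is the classical Hodge index theorem. When $d=3$ the only partition of $d-2=1$ is $(1)$, whence $s_{\lambda}(E)=c_1(E)=c_{d-2}(E)$ and the case $i=0$ is Corollary \ref{cor:cn-2HR}, while $s_{(1)}^{(1)}(E)h=(\rank E)\,h$ is a positive multiple of $h$ and its Hodge-Riemann property is classical (Remark \ref{rmk:HRandLgeneralremarks}(3)). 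Thus I may assume $d\ge 4$, and also $e\ge 2$ as in \S\ref{subsec:Schur}.

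Next I would reduce to $h$ very ample. Replacing $h$ by $mh$ for $m\gg 0$ so that $mh$ is very ample multiplies $\Omega:=s_{\lambda}^{(i)}(E)h^i$ by the positive scalar $m^i$ and replaces the reference class $h$ by $mh$. Since the primitive subspace $\{\alpha:\int_X\alpha\,s_{\lambda}^{(i)}(E)h^i\,h=0\}$ and the sign normalization $\int_X s_{\lambda}^{(i)}(E)h^{i+2}>0$ are both unchanged under this positive rescaling, the Hodge-Riemann property with respect to $h$ is equivalent to that with respect to $mh$ (cf.\ Remark \ref{rmk:HRandLgeneralremarks}(7) and Definition-Lemma \ref{deflemma:HRofquadraticform}). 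So I may assume $h$ is very ample.

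With these reductions in place I would carry out the main step. Form $F=V^*\otimes E\cong E^{\oplus \dim V}$, which is an ample $\mathbb{R}$-twisted bundle since a direct sum of ample bundles is ample, and build $C\subset P=\mathbb{P}_{\sub}(F)$ as in \S\ref{subsec:Schur} for the partition $\lambda$ of $b=d-2$. By Proposition \ref{prop:conebasics}, $C$ has codimension $d-2$, is flat over $X$ and locally a product with irreducible fibers, and $n:=\dim C=f+2$ with $f=(e+N)e-1\ge d$; hence $(X,F,C,h)$ meet all the hypotheses of Theorem \ref{thm:maincone}. For $\alpha,\alpha'\in H^{1,1}(X;\mathbb{R})$ the projection formula together with Proposition \ref{prop:pushforwardlower} gives
$$\int_C (\pi^*\alpha)\,c_{f-i}(U)\,(\pi^*h)^i\,(\pi^*\alpha')=\int_X \alpha\,\big(\pi_* c_{f-i}(U|_C)\big)\,h^i\,\alpha'=\int_X \alpha\,s_{\lambda}^{(i)}(E)h^i\,\alpha'.$$
On the other hand, since $n-2=f$, the left-hand side is exactly the cone-class form $\mathcal{Q}_{i'}$ of Theorem \ref{thm:maincone} with index $i':=d-i$: indeed $c_{n-2-(d-i')}(U)=c_{f-i}(U)$ and $(\pi^*h)^{d-i'}=(\pi^*h)^i$, and the range $0\le i\le d-2$ corresponds precisely to $2\le i'\le d$. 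Theorem \ref{thm:maincone} therefore yields that this form has signature $(1,h^{1,1}(X)-1)$, and the Fulton--Lazarsfeld positivity \eqref{eq:thmfl} gives $\int_X s_{\lambda}^{(i)}(E)h^{i+2}=\mathcal{Q}_{i'}(\pi^*h)>0$. Together these are precisely the Hodge-Riemann property of $s_{\lambda}^{(i)}(E)h^i$ with respect to $h$ in the sense of Definition \ref{def:HRmanifolds} and Definition-Lemma \ref{deflemma:HRofquadraticform}; taking $i=0$ gives the assertion for $s_{\lambda}(E)$.

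The genuine difficulty is not in this final assembly, which is essentially index bookkeeping, but has already been absorbed into Theorem \ref{thm:maincone}, whose proof is the substantial inductive argument of \S\ref{sec:cone_classes} interweaving the auxiliary conditions $(A_i)$ and $(B_i)$. The only points in the present reduction that need care are verifying that the matched index $i'=d-i$ really lands in the admissible range $2\le i'\le d$ of that theorem, that $f\ge d$ so the theorem applies, and that $F$ is ample so that its universal quotient $U$ is nef; all three are immediate here.
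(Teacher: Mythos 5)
Your proposal is correct and follows essentially the same route as the paper: dispose of $d\le 3$ and reduce to $h$ very ample, then identify the form attached to $s_{\lambda}^{(i)}(E)h^i$ with a cone-class form via Proposition \ref{prop:conebasics}, Proposition \ref{prop:pushforwardlower} and the projection formula, and invoke Theorem \ref{thm:maincone}. Your index bookkeeping ($i'=d-i$, $c_{n-2-i}(U)=c_{f-i}(U)$, the range $2\le i'\le d$) matches the paper's, and your treatment of the low-dimensional cases is if anything slightly more explicit than the paper's.
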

\begin{proof}
When $d=2$ the statement follows from the classical Hodge-Riemann bilinear relations, and also when $d=3$ for then the only non-zero Schur class is $c_1(E)$ which is ample.  Thus we may assume $d\ge 4$, and there is no loss in generality in assuming $h$ is very ample.  Furthermore, the statement clearly holds for $e=1$ so we may suppose that $e\ge 2$.

Since $E$ is ample so is $F:= V^*\otimes E$.  Moreover Proposition \ref{prop:conebasics}(b,c,d) tell us that $C\subset \mathbb P_{\sub}(F)=:P$ is irreducible, locally a product and of dimension
$$ n: = \dim C = f+2  \ge d+2$$

Now using Proposition \ref{prop:pushforwardlower} and the projection formula, for all $\alpha,\alpha'\in H^{1,1}(X;\mathbb R)$,
$$\int_X \alpha s_{\lambda}^{i}(E) h^{i} \alpha' = \int_{C} (\pi^*\alpha)c_{n-2-i}(U) (\pi^*h)^{i} (\pi^*\alpha')$$
where $U$ is the universal quotient $\mathbb R$-twisted bundle on $P$.   But our assumption that $E$ is ample implies $F$ is also ample, and thus the result we want follows from Theorem \ref{thm:maincone}.
\end{proof}

\begin{remark}
The Hodge-Riemann property also holds for Schur classes of filtered bundles as considered in \cite{Fultonfiltered}.  In fact in \cite[p630]{Fultonfiltered} it is shown how these classes can be written as cone classes just as in \eqref{prop:conebasics}, so Theorem \ref{thm:maincone} applies in this setting as well.
\end{remark}

\section{An application to cones of cycles}\label{sec:conesofcycles}
The following application was suggested by Brian  Lehmann, and answers in part questions posed in \cite[Problem 6.6]{DELV11} and \cite[Sec 6.2]{Lehmann} concerning cones of cycles of arbitrary codimension.

 On a projective manifold $X$ of dimension $d$ define the \emph{the cone of nef classes of codimension $k$}
$$\Nef^k(X)\subset H^{k,k}(X;\mathbb R)$$
as the cone spanned by those classes $\alpha$ such that $\int_Z \alpha\ge 0$ for all subvarieties $Z\subset X$ of dimension $k$.    One can also define a cone 
$$\operatorname{Schur}^k_{\Nef}\subset N^k(X)$$
as the closed convex cone generated by all Schur classes $s_{\lambda}(E)$ where $E$ is a nef vector bundle on $X$ and $\lambda$ is a partition of $k$.

So, from the work of Fulton-Lazarsfeld \cite{FultonLazarsfeld} we certainly have
$$\operatorname{Schur}^k_{\Nef}(X)\subset \Nef^k(X),$$
and in this section we will show that this inclusion may be strict.


To do so we build on the analysis in \cite{DELV11} which contains a complete description of $\Nef^2(A\times A)$ where $(A,\theta)$ is a very general principally polarized  abelian surface.  
Using their notation, 
$N^1(A\times A)_{\mathbb R}$ has rank 3 with basis $\theta_1,\theta_2,\lambda:=c_1(\mathcal P)$ where $\theta_1,\theta_2$ are the pull-backs of $\theta$ from the two factors of $A\times A$ and $\mathcal P$ is the Poincare bundle on $A$ \cite[Prop. 3.1]{DELV11}.   Moreover \cite[Section 4]{DELV11} we know that $N^2(A\times A)_{\mathbb R}$ has rank 6, with basis $\theta_1^2,\theta_1\theta_2, \theta_2^2,\theta_1\lambda, \theta_2\lambda,\lambda^2$ and the only non-zero products of degree $4$ of these classes are
\begin{equation} \theta_1^2\theta_2^2 = 4 \quad \theta_1\theta_2\lambda^2 = -4 \quad \lambda^4 = 24.\label{eq:degree4products}
\end{equation}
We define
$$\mu:= 8\theta_1\theta_2 + 3\lambda^2.$$

\begin{lemma}\label{lem:basicsaboutmu}\
\begin{enumerate}
\item $\mu$ spans a one-dimensional face of the boundary of $\Nef^2(A\times A)$.
\item The intersection form defined by $\mu$ has the Hard-Lefschetz property but not the Hodge-Riemann property.
\end{enumerate}
\end{lemma}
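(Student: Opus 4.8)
The plan is to settle part (2) first by a direct signature computation, and then to combine the resulting sign information with the Debarre--Ein--Lazarsfeld--Voisin description of $\Nef^2(A\times A)$ to obtain part (1). For part (2) I would compute the Gram matrix of $Q(\alpha,\beta)=\int_{A\times A}\alpha\,\mu\,\beta$ on $H^{1,1}(A\times A;\R)=\langle\theta_1,\theta_2,\lambda\rangle$ using only \eqref{eq:degree4products}. Every entry containing a factor $\theta_i^3$ or an unmatched monomial vanishes, leaving $Q(\theta_1,\theta_2)=8\theta_1^2\theta_2^2+3\theta_1\theta_2\lambda^2=20$ and $Q(\lambda,\lambda)=8\theta_1\theta_2\lambda^2+3\lambda^4=40$, so that in the basis $(\theta_1,\theta_2,\lambda)$
\begin{equation*}
Q=\begin{pmatrix}0&20&0\\20&0&0\\0&0&40\end{pmatrix}.
\end{equation*}
Since $\det Q=-16000\neq0$, the form is non-degenerate, which is precisely the Hard Lefschetz property for $\mu$. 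Its eigenvalues are $20,-20,40$, so the signature is $(2,1)$; as $d=4$ and $\mu\in H^{2,2}$, the Hodge--Riemann property would instead require signature $(1,\rho-1)=(1,2)$ (equivalently $-Q$ positive definite on the two--dimensional primitive space), and therefore fails.

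For part (1) I would invoke the complete description of $\Nef^2(A\times A)$ and of the dual cone $\overline{\operatorname{Eff}}_2(A\times A)$ of pseudoeffective $2$-cycles from \cite{DELV11}, under which (after identifying $N^1(A\times A)$ with symmetric $2\times2$ matrices) a divisor $D=x\theta_1+y\lambda+z\theta_2$ is nef exactly when $\begin{pmatrix}x&y\\y&z\end{pmatrix}$ is positive semidefinite. The first step is to verify $\mu\in\Nef^2$: one computes $\int_{A\times A}\mu\,D^2=40(xz+y^2)$, which is $\ge0$ for every nef $D$ (so that $xz\ge y^2\ge 0$), and the remaining generators of $\overline{\operatorname{Eff}}_2$ are tested the same way. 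Because $\theta_1^2,\theta_2^2$ are effective while $\int\mu\,\theta_1^2=\int\mu\,\theta_2^2=0$, the class $\mu$ lies on $\partial\Nef^2$.

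The crux is to show the minimal face of $\Nef^2$ through $\mu$ is the ray $\R_{\ge0}\mu$. I would first exhibit effective classes annihilated by $\mu$: besides $\theta_1^2,\theta_2^2$, the product $C=(\theta_1+\lambda+\theta_2)(\theta_1-\lambda+\theta_2)=(\theta_1+\theta_2)^2-\lambda^2$ of two nef divisors is pseudoeffective with $\int\mu\,C=0$. These three span $\langle\theta_1^2,\theta_2^2,\,2\theta_1\theta_2-\lambda^2\rangle$, so the minimal face lies in the orthogonal complement of this subspace, which by \eqref{eq:degree4products} is the three--dimensional family $\alpha=a_2\theta_1\theta_2+a_4\theta_1\lambda+a_5\theta_2\lambda+a_6\lambda^2$ with $3a_2=8a_6$. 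I would then impose nefness on such $\alpha$ using squares of nef divisors: for $D=x\theta_1+y\lambda+z\theta_2$ one finds $\int\alpha\,D^2=\tfrac{40}{3}a_6(xz+y^2)-8a_4\,yz-8a_5\,xy$, and degenerating $D$ to the boundary of the nef cone (first $x\to0$, then $z\to0$, each time with $y^2=xz$) the dominant $\sqrt{x}$-, resp.\ $\sqrt{z}$-, terms force $a_4=a_5=0$, after which positivity gives $a_6\ge0$. Hence $\alpha=\tfrac{a_6}{3}\mu$, so the face is exactly $\R_{\ge0}\mu$ and (1) follows.

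The main obstacle is this face computation. The subtlety is that the discrete graph classes $[\Gamma_n]$, $n\in\Z$, by themselves only cut $\Nef^2$ down to a \emph{two}-dimensional face through $\mu$; rigidity to a single ray requires the continuous family of products and squares of nef $\R$-divisors, while the verification that $\mu$ is genuinely nef relies on \cite{DELV11}, whose pseudoeffective cone is strictly larger than the cone generated by such products. Marshalling this larger cone carefully enough to simultaneously confirm $\mu\in\Nef^2$ and to pin the face down to $\R_{\ge0}\mu$ is where the real work lies.
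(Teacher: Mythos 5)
Your part (2) is exactly the paper's argument: the same Gram matrix $20\left(\begin{smallmatrix}0&1&0\\1&0&0\\0&0&2\end{smallmatrix}\right)$ computed from \eqref{eq:degree4products}, with negative determinant giving non-degeneracy (Hard Lefschetz) and signature $(2,1)$ instead of $(1,2)$ killing Hodge--Riemann. For part (1) you take a genuinely different route. The paper works entirely on the primal side: it quotes the explicit inequality description of $\Nef^2(A\times A)$ from \cite[Prop.\ 4.2]{DELV11}, observes that on the slice $a_1=a_3=a_4=a_5=0$ the inequalities collapse to $-\tfrac14 a_2\le a_6\le\tfrac38 a_2$ with $\mu$ sitting at the extremity $a_6=\tfrac38a_2$, and then shows that any decomposition $\mu=\sum_i t^{(i)}v^{(i)}$ into nef classes forces first $a_1^{(i)}=a_3^{(i)}=0$ (from $a_1,a_3\ge0$), then $a_4^{(i)}=a_5^{(i)}=0$ (from $4a_1(a_2-a_6)\ge a_4^2$ and its mirror), and finally proportionality to $\mu$. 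You instead work dually: you cut the minimal face down by exhibiting pseudoeffective classes orthogonal to $\mu$ (namely $\theta_1^2$, $\theta_2^2$ and the product $(\theta_1+\lambda+\theta_2)(\theta_1-\lambda+\theta_2)$ of nef divisors), reduce to the codimension-three slice $3a_2=8a_6$, and then kill $a_4,a_5$ by a degeneration of $D^2$ to the boundary of the nef divisor cone, where the $\sqrt{x}$-order terms dominate. I checked your computations ($\int\mu D^2=40(xz+y^2)$, $\int\mu C=0$, the $\tfrac{40}{3}a_6(xz+y^2)-8a_4yz-8a_5xy$ formula, and the limiting argument with $y=\pm\sqrt{xz}$) and they are all correct. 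Your approach has the merit of needing only the nef cone of divisors plus a handful of explicit pseudoeffective $2$-cycles to pin the face down to a ray; the paper's approach is shorter because the full inequality description of $\Nef^2$ does all of that work in one citation.

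The one step you should tighten is the verification that $\mu$ actually lies in $\Nef^2(A\times A)$. Testing against squares $D^2$ of nef divisors (and, by polarization, against products $D_1D_2$, where the inequality $x_1z_2+x_2z_1+2y_1y_2\ge 0$ follows from AM--GM and $x_iz_i\ge y_i^2$) only handles the subcone of $\overline{\operatorname{Eff}}_2$ generated by such products; the assertion that ``the remaining generators of $\overline{\operatorname{Eff}}_2$ are tested the same way'' is not carried out and presupposes the full generating set from \cite{DELV11}. Since everything here ultimately rests on the DELV11 computation anyway, it is cleaner (and is what the paper does) to read nefness of $\mu$ directly off \cite[Prop.\ 4.2]{DELV11} rather than re-derive it from the dual cone. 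With that step either completed or replaced by the citation, your proof is complete.
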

\begin{proof}
The first statement follows from the explicit description of $\Nef^2(A\times A)$ given in \cite[Prop 4.2]{DELV11}.   They show that a class
\begin{equation} a_1 \theta_1^2 + a_2 \theta_1\theta_2 + a_3\theta_2^2 + a_4 \theta_1\lambda + a_5 \theta_2\lambda + a_6\lambda^2\label{eq:generalclassNef2}\end{equation}
is in $\Nef^2(X)$ if and only if
\begin{align}
a_1,a_3 & \ge 0, \label{eq:nefboundary1}\\
a_2&\ge a_6, \label{eq:nefboundary2}\\
4a_1(a_2-a_6) &\ge a_4^2, \label{eq:nefboundary4}\\
4a_3(a_2-a_6)&\ge a_5^2, \text{ and }\label{eq:nefboundary5}
\end{align}
$$(a_5b^2 + (a_2-6a_6)b +a_4)^2 \le 4(a_3b^2  -a_5b + a_2 - a_6)((a_2-a_6)b^2 - a_4b + a_1)$$
for all $b\in \mathbb R$.
Note that when $a_1=a_3=a_4=a_5=0$ these inequalities reduce to
$$-\frac{1}{4} a_2 \le a_6\le \frac{3}{8}a_2.$$
From this it is clear that $\mu = 8\theta_1\theta_2 + 3\lambda^2\in \Nef^2(A\times A)$.  On the other hand, if $\mu = \sum_i t^{(i)} v^{(i)}$ is a convex combination of nef classes written as
$$v^{(i)} =a_1^{(i)} \theta_1^2 + a_2^{(i)} \theta_1\theta_2 + a_3^{(i)}\theta_2^2 + a_4^{(i)} \theta_1\lambda + a_5^{(i)} \theta_2\lambda + a_6^{(i)}\lambda^2$$
then \eqref{eq:nefboundary1} implies that $a_1^{(i)} = a_3^{(i)}=0$ for all $i$, and then (\ref{eq:nefboundary4}, \ref{eq:nefboundary5})
imply $a_4^{(i)} = a_5^{(i)}=0$ for all $i$.   Thus we in fact have $-\frac{1}{4} a_2^{(i)} \le a_6^{(i)} \le \frac{3}{8} a_2^{(i)}$ for all $i$, and since $\mu$ lies on one extremity of this inequality we must have $a_6^{(i)} = \frac{3}{8}a_2^{(i)}$ for all $i$.  Hence each $v_i$ is a scalar multiple of $\mu$ proving (1).

For (2) we observe that  \eqref{eq:degree4products} implies the intersection pairing of $\mu$ on  $N^1(A\times A)_{\mathbb R}$ taken with respect to the basis $\theta_1,\theta_2,\lambda$ has matrix
$$ Q: = 20\left(\begin{array}{ccc} 0 & 1 & 0 \\ 1 & 0 & 0 \\ 0&0& 2\end{array}\right)$$
which has strictly negative determinant.  Thus $\mu$ has the Hard-Lefschetz property, but cannot have the Hodge-Riemann property (which would require $Q$ to have signature $(1,2)$ and thus strictly positive determinant).
\end{proof}

\begin{proposition}
If $A$ is a very general  principally polarized  abelian surface then
$$\operatorname{Schur}^2_{\Nef}(A\times A)\subsetneq \Nef^2(A\times A).$$
\end{proposition}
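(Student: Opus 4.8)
The plan is to argue by contradiction, showing that the extremal class $\mu=8\theta_1\theta_2+3\lambda^2$ of Lemma \ref{lem:basicsaboutmu} cannot lie in $\operatorname{Schur}^2_{\Nef}(A\times A)$; since $\mu\in\Nef^2(A\times A)$ this gives the strict inclusion. Everything happens inside $N^2(A\times A)_\R$, which contains both cones, and I record all quadratic forms on the three-dimensional space $N^1(A\times A)_\R=\langle\theta_1,\theta_2,\lambda\rangle$ through the pairing $(\alpha,\alpha')\mapsto\int_{A\times A}\alpha\,\Omega\,\alpha'$. For a class $\Omega$ write $n_+(\Omega)$ for the number of strictly positive eigenvalues of this pairing on $N^1(A\times A)_\R$. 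By Lemma \ref{lem:basicsaboutmu}(2) the form of $\mu$ has signature $(2,1)$, so $n_+(\mu)=2$; the entire contradiction comes from establishing that $n_+\le1$ for any class generating an extremal ray of $\operatorname{Schur}^2_{\Nef}$.

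The first step is to bound $n_+$ on the generators. Fix an ample class $h$, and recall $d=\dim(A\times A)=4$, so the relevant partitions $\lambda$ are partitions of $d-2=2$. If $E$ is a nef vector bundle, then $E\langle th\rangle$ is ample for every $t>0$, so Theorem \ref{thm:schurhodgeriemann} applies: the class $s_{\lambda}(E\langle th\rangle)$ has the Hodge-Riemann property, hence its pairing on the full $H^{1,1}(A\times A;\R)$ has signature $(1,h^{1,1}-1)$. Restricting a form with a single positive direction to the subspace $N^1(A\times A)_\R$ can only keep or decrease the number of positive eigenvalues, so $n_+(s_{\lambda}(E\langle th\rangle))\le1$. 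As $t\to0$ we have $s_{\lambda}(E\langle th\rangle)\to s_{\lambda}(E)$, and the map $\Omega\mapsto n_+(\Omega)$ is lower semicontinuous (positive-definiteness on a fixed subspace is an open condition); hence $n_+(s_{\lambda}(E))\le1$ for every nef bundle $E$.

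Next I would reduce from the whole cone to a single generator using extremality. Suppose for contradiction $\mu\in\operatorname{Schur}^2_{\Nef}(A\times A)$. Since $\mu$ spans an extremal ray of $\Nef^2(A\times A)$ and $\operatorname{Schur}^2_{\Nef}\subseteq\Nef^2$, it also spans an extremal ray of the closed, convex, pointed cone $K:=\operatorname{Schur}^2_{\Nef}(A\times A)$ (pointed because it sits inside the salient cone $\Nef^2$). Choosing a functional $\ell$ strictly positive on $K\setminus\{0\}$ produces a compact base $B=K\cap\{\ell=1\}$, which equals the closed convex hull of the rescaled generators $s_{\lambda}(E)/\ell(s_{\lambda}(E))$. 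The point $\mu/\ell(\mu)$ is then an extreme point of $B$, and by Milman's theorem every extreme point of $B$ lies in the closure of the generating set. Thus there exist nef bundles $E_k$, partitions $\lambda^{(k)}$ of $2$, and scalars $c_k>0$ with $c_k\,s_{\lambda^{(k)}}(E_k)\to\mu$. Each term has $n_+\le1$ by the previous step and scale-invariance of $n_+$, so lower semicontinuity forces $n_+(\mu)\le1$, contradicting $n_+(\mu)=2$. Hence $\mu\notin\operatorname{Schur}^2_{\Nef}(A\times A)$ and the inclusion is strict.

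I expect the extremal-ray reduction (the Milman step) to be the part needing the most care: one must confirm that $K$ is closed and pointed so that it admits a compact base, and that $\mu$ really spans an extremal ray of the subcone $K$ and not merely a boundary point. The rest — the signature bound from Theorem \ref{thm:schurhodgeriemann}, its behaviour under restriction to $N^1(A\times A)_\R$, and the stability of the bound $n_+\le1$ under both the nef limit $t\to0$ and the convex limit $c_k s_{\lambda^{(k)}}(E_k)\to\mu$ — is routine linear algebra once the argument is organized around the single invariant $n_+$.
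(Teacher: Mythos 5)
Your proof is correct and follows essentially the same route as the paper: both arguments exhibit $\mu=8\theta_1\theta_2+3\lambda^2$ as an extreme point of a compact slice of $\Nef^2(A\times A)$ whose intersection form on $N^1(A\times A)_{\mathbb R}$ has signature $(2,1)$, observe that Schur classes of nef bundles are limits of classes with the Hodge--Riemann property (hence satisfy the closed condition $n_+\le 1$), and conclude via the Milman/Straszewicz partial converse to Krein--Milman (the paper's Lemma \ref{lem:sillycones}). Your packaging via the lower semicontinuous invariant $n_+$ and a compact base of the cone is a clean equivalent of the paper's closed set $\Lambda$ and affine slice $H$.
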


\begin{proof}
Consider an affine hyperplane 
$H $ in $ N^2(A\times A)$ such that $H\cap\R\mu=\{\mu\}$ and the closed convex sets 
\begin{align*}
C_1 &:= \Nef^2(A\times A)\cap H \\
C_2 & :=\operatorname{Schur}^2_{\Nef}(A\times A)\cap H.\end{align*}
The goal is to show that $\mu$ lies in $C_1$ but not in  $C_2$ and thus $\operatorname{Schur}^2_{\Nef}(A\times A)\subsetneq \Nef^2(A\times A).$
Let $\Lambda\subset H$ be the closure of the  subset of $C_1$ consisting of those classes in $C_1$ having the Hodge-Riemann property. By our main result $\Lambda$ contains all positive scalar multiples in $C_1$ of classes of the form $s_{\lambda}(E)$ for nef vector bundles   $E$ on $X$ and $|\lambda|=2$.  In particular $C_2\subset \overline{\operatorname{Conv}}(\Lambda)$. 

By Lemma \ref{lem:basicsaboutmu} $\mu$ is an extreme point of $C_1$ not lying in $\Lambda$. The result we want is now an elementary statement about convex sets in finite dimensional vector spaces which we give in Lemma \ref{lem:sillycones}.
\end{proof} 

\begin{lemma}\label{lem:sillycones}
Let $\Lambda$ be a non-empty closed set in a finite dimensional vector space $H$ and let   $C:=\overline{\operatorname{Conv}(\Lambda)}$ be its closed convex hull. Then all extreme points of $C$ belong to $\Lambda$.
\end{lemma}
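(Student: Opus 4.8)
The plan is to deduce this from the classical Milman-type principle, while taking care that $\Lambda$ is only assumed closed and not compact: a point of $C=\overline{\operatorname{Conv}(\Lambda)}$ may a priori be approximated by convex combinations involving points of $\Lambda$ that escape to infinity, so the usual extraction of a convergent subsequence is not available. My route is to first reduce to \emph{exposed} points of $C$ and then handle those directly. For the reduction I would invoke Straszewicz's theorem: for a closed convex set in a finite-dimensional space, the exposed points are dense among the extreme points. Thus it suffices to show that every exposed point of $C$ belongs to $\Lambda$; since $\Lambda$ is closed, an arbitrary extreme point, being a limit of exposed points, will then also lie in $\Lambda$.

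So let $p$ be exposed by a linear functional $\ell$, meaning $\ell(x)<\ell(p)=:c$ for every $x\in C\setminus\{p\}$ (and $\ell\le c$ on $C$). Because $\ell$ is linear and continuous and $C=\overline{\operatorname{Conv}(\Lambda)}$, neither passing to the convex hull nor taking the closure changes the supremum of $\ell$, so $\sup_\Lambda \ell=\sup_C \ell=c$. I would then choose a sequence $q_m\in\Lambda$ with $\ell(q_m)\to c$ (and, discarding the trivial case, $q_m\ne p$).

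The heart of the matter is to rule out $|q_m|\to\infty$. Suppose some subsequence were unbounded. Setting $u_m:=(q_m-p)/|q_m-p|$ and passing to a convergent subsequence $u_m\to u$ with $|u|=1$, I would use convexity of $C$: for each fixed $s\ge 0$ the point $p+su_m$ lies on the segment $[p,q_m]\subseteq C$ as soon as $|q_m-p|\ge s$, and letting $m\to\infty$ (with $C$ closed) gives $p+su\in C$ for all $s\ge 0$. Moreover $\ell(u)=\lim(\ell(q_m)-c)/|q_m-p|=0$, since the numerator tends to $0$ while the denominator tends to $\infty$. Hence every $p+su$ attains the maximal value $c$ of $\ell$ on $C$, contradicting that $p$ is the \emph{unique} maximiser. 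Therefore $(q_m)$ is bounded, a subsequence converges to some $q\in C$ (as $C$ is closed) with $\ell(q)=c$, which forces $q=p$ by uniqueness; and since $q_m\in\Lambda$ with $\Lambda$ closed, $p=q\in\Lambda$, as required.

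I expect this third step to be the only genuine obstacle: it is exactly where the non-compactness of $\Lambda$ must be controlled, and the recession-direction argument above is what substitutes for the trivial subsequence extraction available in Milman's compact formulation. The remaining ingredients—the reduction via Straszewicz and the invariance of $\sup\ell$ under convex hull and closure—are formal.
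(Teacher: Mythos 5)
Your proof is correct. It shares with the paper's argument the first and formal half: both invoke Straszewicz's theorem to reduce from extreme points to exposed points, and both use closedness of $\Lambda$ to transfer the conclusion back to extreme points. The two diverge in how they show that an exposed point $p$ of $C$ lies in $\Lambda$. The paper argues topologically: writing the exposing condition as $C\subset\{f\ge 0\}$ with $C\cap\{f=0\}=\{p\}$, it shows that the slabs $V_t=C\cap\{0\le f<t\}$ form a neighbourhood basis of $p$ in $C$ (this is where an auxiliary compact cube and the positive distance from the boundary of its face to $C$ are used), and observes that no complement $C\setminus V_t=C\cap\{f\ge t\}$ can contain $\Lambda$, being closed, convex and missing $p$; hence every neighbourhood of $p$ meets $\Lambda$ and $p\in\overline{\Lambda}=\Lambda$. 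You instead take a maximizing sequence $q_m\in\Lambda$ for the exposing functional $\ell$ and rule out escape to infinity by extracting a recession direction $u$ with $\ell(u)=0$ and $p+su\in C$ for all $s\ge 0$, contradicting uniqueness of the maximizer; boundedness then yields a subsequence converging to $p$ inside the closed set $\Lambda$. Both arguments are sound, and both must confront the same difficulty, namely that points on which $\ell$ is nearly maximal could a priori be far from $p$: the paper buries this in the neighbourhood-basis claim, while you isolate it explicitly in the recession-direction step. Your route avoids the auxiliary cube construction; the paper's avoids any extraction of subsequences.
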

\begin{proof}
By a result of Straszewicz \cite{Straszewicz} every extreme point of a  closed convex set  $C$ in a finite dimensional vector space $H$ is a limit of exposed points of $C$. So it will be enough to show that the exposed points of $C$ belong to $\Lambda$, since $\Lambda$ is closed. Recall that a point $x\in C$ is called {\em exposed} if there exists an affine function $f$ on $H$ such that $C\cap\{ f=0\}=\{ x\}$, or in other words if there exists a supporting hyperplane $H_0$ for $C$ with  $C\cap H_0=\{ x\}$. 

So let $x$ be an exposed point of $C$ with supporting affine function $f$ and supporting hyperplane $H_0=\{ f=0\}$ and such that $C\subset\{ f\ge 0\}$. We fix a scalar product on $H$. We consider the sets   $V_t:=C\cap\{0\le f< t\}$ for $t>0$ and will show that they form a neighbourhood basis of $x$ in $C$. Their complements $C\setminus V_t$ in $C$ cannot contain $\Lambda$ since they are closed and convex and do not contain $x$. From this it follows that $x$ is in $\Lambda$.

It remains to show that the system $(V_t)_{t>0}$ is a neighbourhood basis for $x$ in $C$. Take any compact hypercube $W$ in $H$ with one (top dimensional) face $F$ on $H_0$ such that $F$ is centred at $x$ and such that $f$ is non-negative on $W$. Then $W$ is a neighbourhood of $x$ in $C$. Its face $F$ meets $C$ only in $x$. The boundary of $F$ is compact and disjoint from $C$ and hence has a positive distance $d$ to $C$.  If we take $t\le d$, then $V_t$ is completely contained in $W$. Since we may choose $W$ arbitrarily small our claim follows.
\end{proof} 
%
%
%

\section{Higher rank Khovanskii-Teissier inequalities}\label{sec:logconcavity}

\begin{lemma}\label{lem:FLforlowerschur}
Let $E$ be an ample vector bundle on $X$ of rank $e\ge d$ where $d = \dim X\ge 2$ and  let $\mu$ be a partition of $e$.  Then
$$\int_X s_{\mu}^{(e-d)}(E)>0.$$
\end{lemma}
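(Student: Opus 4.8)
The plan is to realize the derived Schur class $s_{\mu}^{(e-d)}(E)$ as an honest top Schur class of an ample bundle on an auxiliary product, and then invoke the positivity theorem of Fulton--Lazarsfeld \cite{FultonLazarsfeld}, which asserts that $\int_Z s_{\mu}(G)>0$ whenever $G$ is an ample bundle on a projective manifold $Z$ with $|\mu|=\dim Z$.

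First I would set $Y:=X\times \mathbb P^{e-d}$, with projections $p\colon Y\to X$ and $q\colon Y\to\mathbb P^{e-d}$, and write $\tau:=q^*c_1(\mathcal O_{\mathbb P^{e-d}}(1))$, so that $\dim Y=e$. I then consider
$$E':=E\boxtimes \mathcal O_{\mathbb P^{e-d}}(1)=(p^*E)\langle \tau\rangle,$$
a genuine vector bundle of rank $e$ on $Y$. As in the proof of Theorem \ref{thm:HI2}, $E'$ is ample: since twisting by a line bundle does not change the projectivization of quotients, $\mathbb P(E')\cong \mathbb P(E)\times\mathbb P^{e-d}$, and under this identification $c_1(\mathcal O_{\mathbb P(E')}(1))$ becomes the sum of the two hyperplane classes pulled back from the (ample) factors $\mathbb P(E)$ and $\mathbb P^{e-d}$, which is ample on a product. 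Because $|\mu|=e=\dim Y$ and $E'$ is ample, Fulton--Lazarsfeld gives $\int_Y s_{\mu}(E')>0$.

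The second step is a degree count identifying $\int_Y s_{\mu}(E')$ with the quantity we want. By the defining property of the derived Schur polynomials (Definition \ref{def:lowerschur}) applied to the twist, together with functoriality of characteristic classes,
$$s_{\mu}(E')=s_{\mu}\big((p^*E)\langle \tau\rangle\big)=\sum_{i=0}^{e} s_{\mu}^{(i)}(p^*E)\,\tau^i=\sum_{i=0}^{e} p^*\!\left(s_{\mu}^{(i)}(E)\right)\tau^i.$$
Integrating over $Y$, the factor $p^*s_{\mu}^{(i)}(E)$ is pulled back from $X$ and lies in bidegree $(e-i,e-i)$, while $\tau^i$ is pulled back from $\mathbb P^{e-d}$; by Fubini such a term contributes only when the $X$-part has top degree $d$, i.e.\ $e-i=d$, hence $i=e-d$. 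Terms with $i<e-d$ vanish because then $s_{\mu}^{(i)}(E)\in H^{e-i,e-i}(X)=0$ for degree reasons, and terms with $i>e-d$ vanish because $\tau^{e-d+1}=0$ on $\mathbb P^{e-d}$. The projection formula then yields
$$\int_Y s_{\mu}(E')=\left(\int_X s_{\mu}^{(e-d)}(E)\right)\left(\int_{\mathbb P^{e-d}} c_1(\mathcal O(1))^{e-d}\right)=\int_X s_{\mu}^{(e-d)}(E),$$
and combining with the previous step gives $\int_X s_{\mu}^{(e-d)}(E)>0$.

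I expect no serious obstacle here: the argument is a clean reduction whose only substantive input is the quoted Fulton--Lazarsfeld positivity. The points requiring a moment's care are the ampleness of $E'$ on the product and the bookkeeping that isolates exactly the index $i=e-d$; both are routine. (When $e=d$ the construction degenerates to $Y=X$, $E'=E$, and the statement is Fulton--Lazarsfeld verbatim, which confirms the normalization.)
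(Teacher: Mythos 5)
Your argument is correct and is essentially identical to the paper's proof: both pass to $X\times\mathbb P^{e-d}$ with $E\boxtimes\mathcal O(1)$, apply Fulton--Lazarsfeld to the top Schur class there, and isolate the $i=e-d$ term of the twist expansion by the same degree count. You simply spell out the ampleness of $E'$ and the bookkeeping in more detail than the paper does.
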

\begin{proof}
Write $e=d+k$ and let $\sigma$ denote the class of the hyperplane class on $\mathbb P^k$.  The bundle $E':= E\boxtimes \mathcal O(\sigma)$ on $X\times \mathbb P^k$ is ample so by Fulton-Lazarsfeld $\int_{X\times \mathbb P^k} s_{\mu}(E')>0$.   Now 
$$s_{\mu}(E') = s_{\mu}^{(e-d)}(E) \sigma^k$$
(we have used here that $s_{\mu}^{(i)}(E)=0$ if $e-i=|\mu|-i>d$ and also that $\sigma^j=0$ if $j>k$).   The result follows.
\end{proof}

\begin{proposition}\label{prop:logconcave}
Let $E$ be an ample vector bundle of rank $e\ge d$ where $d = \dim X\ge 2$ and $\alpha\in H^{1,1}(X;\mathbb R)$.  Let $\mu$ be a partition of $e$.   Then
\begin{equation}\int_X s_{\mu}^{(e-d)}(E) \int_X s_{\mu}^{(e-d+2)}(E) \alpha^2 \le  \left( \int_X s_{\mu}^{(e-d+1)}(E) \alpha \right)^2\label{eq:logconcaveI}\end{equation}
with equality if and only if $\alpha=0$.
\end{proposition}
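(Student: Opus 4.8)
The plan is to deduce this from the Hodge-Riemann property of the \emph{top} Schur class on a suitable product, following the same ``product trick'' used in the proof of Theorem \ref{thm:HI2}. Write $e = d+k$ with $k\ge 0$, let $\sigma$ be the hyperplane class on $\mathbb P^{k+2}$, and set
$$X' := X\times \mathbb P^{k+2}, \qquad E' := E\boxtimes \mathcal O(\sigma).$$
Then $E'$ is an ample bundle (the external product of ample bundles is ample, as already used in Lemma \ref{lem:FLforlowerschur}), $\dim X' = d+k+2 = e+2$, and $\rank E' = e = \dim X' - 2$. Since $\mu$ is a partition of $e = \dim X'-2$ with $\mu_1 \le e = \rank E'$, Theorem \ref{thm:schurhodgeriemann} applies to $E'$ and shows that $s_\mu(E') \in H^{e,e}(X';\mathbb R)$ has the Hodge-Riemann property; equivalently (Definition-Lemma \ref{deflemma:HRofquadraticform}) the intersection form $Q(\beta,\beta') := \int_{X'} \beta\, s_\mu(E')\, \beta'$ on $H^{1,1}(X';\mathbb R)$ has signature $(1, h^{1,1}(X')-1)$ and so satisfies the Hodge-Index inequality.

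The next step is to expand $s_\mu(E')$ and read off the three integrals in \eqref{eq:logconcaveI}. Writing $E' = (\pi_1^*E)\langle\sigma\rangle$ and using $s_\mu(E\langle\sigma\rangle) = \sum_i s_\mu^{(i)}(E)\,\sigma^i$ (Definition \ref{def:lowerschur}), the terms with $i < e-d$ vanish because $s_\mu^{(i)}(E)\in H^{e-i,e-i}(X)$ with $e-i>d$, while the terms with $i > e-d+2$ vanish because $\sigma^i = 0$ for $i>k+2$; hence
$$s_\mu(E') = s_\mu^{(e-d)}(E)\,\sigma^{e-d} + s_\mu^{(e-d+1)}(E)\,\sigma^{e-d+1} + s_\mu^{(e-d+2)}(E)\,\sigma^{e-d+2}.$$
Using $H^{p,p}(X)=0$ for $p>d$, that $\sigma^{k+3}=0$, and $\int_{\mathbb P^{k+2}}\sigma^{k+2}=1$, a routine computation via the projection formula on the product gives, for $\alpha\in H^{1,1}(X;\mathbb R)$ regarded inside $H^{1,1}(X';\mathbb R) = H^{1,1}(X;\mathbb R)\oplus \mathbb R\sigma$,
$$Q(\sigma,\sigma) = \int_X s_\mu^{(e-d)}(E),\quad Q(\alpha,\sigma) = \int_X \alpha\, s_\mu^{(e-d+1)}(E),\quad Q(\alpha,\alpha) = \int_X \alpha^2\, s_\mu^{(e-d+2)}(E),$$
since the unwanted cross terms either land in a vanishing cohomology group of $X$ or carry a factor $\sigma^{\ge k+3}$.

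Finally I would apply the Hodge-Index inequality of Definition-Lemma \ref{deflemma:HRofquadraticform}(4). Lemma \ref{lem:FLforlowerschur} gives $Q(\sigma,\sigma) = \int_X s_\mu^{(e-d)}(E) > 0$, so $\sigma$ is an admissible positive vector, and taking $v = \alpha$, $h' = \sigma$ yields $Q(\alpha)Q(\sigma) \le Q(\alpha,\sigma)^2$, which is exactly \eqref{eq:logconcaveI}. The equality case is clean: the Hodge-Index inequality is an equality iff $\alpha$ is proportional to $\sigma$ in the decomposition $H^{1,1}(X';\mathbb R) = H^{1,1}(X;\mathbb R)\oplus \mathbb R\sigma$, and since these are complementary summands this forces $\alpha = 0$. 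The only genuine input is the Hodge-Riemann property of $s_\mu(E')$, i.e. the main theorem in the borderline rank $=\dim-2$ case, together with the Fulton-Lazarsfeld positivity $\int_X s_\mu^{(e-d)}(E)>0$; everything else is bookkeeping of degrees. Thus the real obstacle is not in this argument but in the already-established Theorem \ref{thm:schurhodgeriemann} that it invokes.
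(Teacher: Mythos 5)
Your proposal is correct and follows essentially the same route as the paper: pass to $X\times\mathbb P^{k+2}$ with $E\boxtimes\mathcal O(1)$, expand $s_\mu(E')$ into the three derived Schur terms, and apply the Hodge-Index inequality from Definition-Lemma \ref{deflemma:HRofquadraticform}(4) with the hyperplane class as the positive direction, using Lemma \ref{lem:FLforlowerschur} for the positivity hypothesis and the complementary-summand observation for the equality case. The only cosmetic difference is that you cite Theorem \ref{thm:schurhodgeriemann} explicitly where the paper leaves the Hodge-Riemann input implicit.
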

\begin{proof}
Write $e= d+k$ so $k\ge 0$,  and set $X' = X\times \mathbb P^{k+2}$.  
Denote by $\tau$ be the hyperplane class on $\mathbb P^{k+2}$ and set   $$E' := E\boxtimes \mathcal O(\tau)$$  which is ample.  Clearly $\rank(E')=e =d+k  = \dim X'-2$.  Moreover by definition of the derived Schur-classes,
\begin{equation}s_{\mu}(E') = s_{\mu}^{(k)}(E)\tau^k + s_{\mu}^{(k+1)}(E) \tau^{k+1}+ s_{\mu}^{(k+2)}(E)\tau^{k+2}.\label{eq:cnonXPk2}
\end{equation}
(we have used here that $s_{\mu}^{(i)}=0$ if $e-i=|\mu|-i>d$ and $\tau^j=0$ if $j>k+2$).  In particular
$$\int_{X'} s_{\mu}(E') \tau^2  = \int_X s_{\mu}^{(k)}(E)> 0$$
where the last inequality follows from Lemma \ref{lem:FLforlowerschur}.  So we may apply the Hodge-Index inequality  (cf. 
Definition-Lemma \ref{deflemma:HRofquadraticform}) 
for $s_{\mu}(E')$ which gives
$$\int_{X'} \beta^2  s_{\mu}(E') \int_{X'} \tau^2 s_\mu(E') \le \left(\int_{X'} \beta \tau s_{\mu}(E')\right)^2 \text{ for all } \beta\in H^{1,1}(X';\mathbb R)$$
with equality if and only if $\beta$ is proportional to $\tau$.    In particular this applies when $\beta =\alpha\in H^{1,1}(X)$, and from \eqref{eq:cnonXPk2}
$$\int_{X'} \alpha^2 s_{\mu}(E') = \int_X \alpha^2  s_{\mu}^{(k+2)}(E)$$
$$\int_{X'} \alpha\tau s_{\mu}(E') = \int_X \alpha s_{\mu}^{(k+1)}(E).$$
Putting this altogether yields \eqref{eq:logconcaveI}.  Moreover equality holds in \eqref{eq:logconcaveI} if and only if $\alpha$ is proportional to $\tau$, which happens if and only if $\alpha=0$.


\end{proof}

\begin{remark}
Consider the case $\dim X=2$ and $E$ is ample of rank at least $2$ and $\mu_1=2$.    Then \eqref{eq:logconcaveI} becomes 
$$\int_X c_2(E) \int_X \alpha^2 \le  \left(\int_X c_1(E) \alpha \right)^2$$
with equality if and only if $\alpha=0$.  In particular this holds when $\alpha=c_1(E)$, in which case this inequality simplifies to
$$\int_X c_1(E)^2 -c_2(E) > 0.$$
This is as expected from \cite{FultonLazarsfeld} since $c_1(E)^2-c_2(E)$ is a Schur class.
\end{remark}

\begin{theorem}[Log-concavity for Schur numbers]\label{thm:logconcaveschur}
Let $X$ be projective of dimension $d\ge 2$, let  $h\in H^{1,1}(X,\mathbb Z)$ be an integral ample class and let $E$ be an ample vector bundle on $X$ of rank  $e\ge d$ and let $\mu$ be a partition of $e$.   Then the function
$$ i \mapsto \int_X s_{\mu}^{(e-i)}(E) h^{d-i} \text{ for }i=0,\ldots, d$$
is strictly log-concave.  
\end{theorem}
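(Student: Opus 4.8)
The plan is to establish the three-term inequality $f(i)^2 > f(i-1)f(i+1)$ for each interior index and to deduce strict log-concavity from it, where throughout I abbreviate $f(i):=\int_X s_{\mu}^{(e-i)}(E)h^{d-i}$ (a well-defined real number, since $s_{\mu}^{(e-i)}(E)\in H^{i,i}(X;\R)$). First I would reduce to the case that $h$ is very ample: passing from $h$ to a positive multiple $mh$ replaces $f(i)$ by $m^{d-i}f(i)$, so it alters $\log f(i)$ only by the affine function $i\mapsto (d-i)\log m$, and strict concavity of a sequence is insensitive to adding an affine function of the index. I would then record the elementary reduction that strict log-concavity in the sense of Theorem \ref{thm:intrologconvex} is equivalent to the conjunction of (A) $f(i)>0$ for all $i$, and (B) $f(i)^2>f(i-1)f(i+1)$ for all $1\le i\le d-1$; indeed (B) asserts that the increments $\log f(i)-\log f(i-1)$ of $g:=\log f$ are strictly decreasing, and a telescoping argument shows any such sequence satisfies $g(j)>t\,g(i)+(1-t)g(k)$ whenever $i<j<k$ and $ti+(1-t)k=j$.

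For the positivity (A) I would, for each fixed $i$ with $2\le i\le d$, cut out a smooth subvariety $Y\subset X$ of dimension $i$ by $d-i$ general members of $|h|$, so that $[Y]=h^{d-i}$ and the projection formula gives $f(i)=\int_Y s_{\mu}^{(e-i)}(E|_Y)$; since $E|_Y$ is ample of rank $e\ge d\ge i=\dim Y$ and $e-i=e-\dim Y$, Lemma \ref{lem:FLforlowerschur} yields $f(i)>0$. The boundary cases are immediate: for $i=0$ the class $s_{\mu}^{(e)}$ is the positive constant $s_{\mu}(1,\dots,1)$, so $f(0)=s_{\mu}(1,\dots,1)\int_X h^d>0$, and for $i=1$ the degree-one symmetric polynomial $s_{\mu}^{(e-1)}$ is a positive multiple of $c_1$, so $f(1)$ is a positive multiple of $\int_X c_1(E)h^{d-1}>0$ (Corollary \ref{cor:nonzerochernclass}); alternatively the curve case is handled by the same product-with-$\mathbb{P}^{e-1}$ argument that proves Lemma \ref{lem:FLforlowerschur}.

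The heart of the argument is (B). Fix $1\le i\le d-1$ and let $Y\subset X$ be a smooth complete intersection of $d-i-1$ general members of $|h|$, so $\dim Y=i+1\ge 2$ and $E|_Y$ is ample of rank $e\ge d\ge i+1=\dim Y$. I would apply Proposition \ref{prop:logconcave} to the pair $(Y,E|_Y)$ with test class $\alpha=h|_Y$. With $\dim Y=i+1$ the three derived-Schur indices occurring there specialise to $e-\dim Y=e-i-1$, $e-\dim Y+1=e-i$ and $e-\dim Y+2=e-i+1$; using the functoriality $s_{\mu}^{(j)}(E|_Y)=s_{\mu}^{(j)}(E)|_Y$ together with $[Y]=h^{d-i-1}$ and the projection formula, the three integrals in \eqref{eq:logconcaveI} become
\begin{align*}
\int_Y s_{\mu}^{(e-i-1)}(E|_Y)&=f(i+1),\\
\int_Y s_{\mu}^{(e-i+1)}(E|_Y)(h|_Y)^2&=f(i-1),\\
\int_Y s_{\mu}^{(e-i)}(E|_Y)(h|_Y)&=f(i).
\end{align*}
Proposition \ref{prop:logconcave} then gives $f(i+1)f(i-1)\le f(i)^2$, and since its equality clause holds only for $\alpha=h|_Y=0$, which is impossible as $h$ is ample, the inequality is strict, proving (B).

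The genuinely delicate points are purely organisational: keeping track of how lowering the dimension from $d$ to $\dim Y=i+1$ shifts the derived-Schur index and which power of $h$ absorbs the difference, and confirming that restriction commutes with the classes $s_{\mu}^{(j)}$. The single piece of real content---a Hodge--index inequality relating the consecutive classes $s_{\mu}^{(e-i\pm1)}$ and $s_{\mu}^{(e-i)}$---is delivered verbatim by Proposition \ref{prop:logconcave}, so I expect no serious obstacle beyond correctly setting up the reduction to hyperplane sections.
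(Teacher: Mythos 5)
Your proposal is correct and follows essentially the same route as the paper: reduce to $h$ very ample, represent a power of $h$ by a smooth complete-intersection subvariety $Y$, apply Proposition \ref{prop:logconcave} to $(Y,E|_Y)$ with $\alpha=h|_Y$ to get the strict three-term inequality $f(i-1)f(i+1)<f(i)^2$, and conclude by the formal Lemma \ref{lem:concavityformal}. The only differences are an index shift (the paper cuts to dimension $i$ and relates $f(i),f(i-1),f(i-2)$) and your somewhat more explicit treatment of the positivity $f(i)>0$ and of the reduction to very ample $h$, both of which the paper leaves implicit.
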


Note that in the particular case of Example \ref{ex: Chern classes} we obtain the stated Theorem \ref{thm:intrologconvex}.

\begin{proof}
Without loss of generality we may assume $h$ is very ample.  Then for each $i=2,\ldots,d$ the class $h^{d-i}$ is represented by a smooth submanifold $Y\subset X$ of dimension $i$.   Applying \eqref{eq:logconcaveI} to $E|_Y$ (with $\alpha$ replaced by $h|_Y$) gives
\begin{equation}\label{eq:logconcaverestricted}\int_Y s^{(e-i)}(E|_Y)  \int_Y s^{(e-i+2)}(E)h^2|_Y < \left(\int_Y s^{(e-i+1)}(E) h|_Y \right)^2.\end{equation}
where we have also used functoriality of the derived Schur classes.  Said another way,
\begin{equation}\label{eq:logconcaverestricted:repeat}\int_X s^{(e-i)}(E) h^{d-i}  \int_X s^{(e-i+2)}(E) h^{d-i+2} < \left(\int_X s^{(e-j+1)}(E) h^{d-i+1}\right)^2.\end{equation}
Thus defining 
$$f(i): =\log \int_X s^{(e-i)}(E) h^{d-i},$$
and taking the logarithm of \eqref{eq:logconcaverestricted:repeat} yields 
$$ \frac{1}{2}(f(i) + f(i-2)) < f(i-1) \text{ for } i=2,\ldots, d.$$
The conclusion we want about $f$ is then a formal statement about functions with this property (Lemma \ref{lem:concavityformal}).
\end{proof}

\begin{lemma}\label{lem:concavityformal}
Let $f:\{0,\ldots,d\} \to \mathbb R$ be a function such that
$$ \frac{1}{2}(f(i) + f(i-2)) < f(i-1)  \text{ for } i=2,\ldots d.$$
Then for any $0\le i<j<k\le d$ if $t$ is defined so $j=ti+(1-t)k$ 
\begin{equation}
tf(i) + (1-t)f(k) < f(j).\label{eq:fconvex}\end{equation}
\end{lemma}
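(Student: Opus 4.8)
The plan is to reduce the statement to the strict monotonicity of the first differences of $f$, and then to a comparison of average slopes.

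First I would introduce the forward differences $g(m):=f(m)-f(m-1)$ for $m=1,\ldots,d$. The hypothesis $\tfrac12\big(f(m)+f(m-2)\big)<f(m-1)$ rearranges to $f(m)-f(m-1)<f(m-1)-f(m-2)$, that is $g(m)<g(m-1)$, so the sequence $g(1),\ldots,g(d)$ is strictly decreasing.

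Next I would reduce the desired inequality \eqref{eq:fconvex} to a slope comparison. With $t=\frac{k-j}{k-i}$ and $1-t=\frac{j-i}{k-i}$, inequality \eqref{eq:fconvex} is equivalent, after multiplying by $k-i>0$, to $(k-j)f(i)+(j-i)f(k)<(k-i)f(j)$. Writing $(k-i)f(j)=(k-j)f(j)+(j-i)f(j)$ and dividing by $(j-i)(k-j)>0$, this is in turn equivalent to
$$\frac{f(j)-f(i)}{j-i}>\frac{f(k)-f(j)}{k-j}.$$

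Finally I would prove this slope inequality. Since $f(j)-f(i)=\sum_{m=i+1}^{j}g(m)$ and $f(k)-f(j)=\sum_{m=j+1}^{k}g(m)$, the left-hand side is the average of the $j-i$ numbers $g(i+1),\ldots,g(j)$, while the right-hand side is the average of the $k-j$ numbers $g(j+1),\ldots,g(k)$. Because $g$ is strictly decreasing, every term appearing in the first average is at least $g(j)$, whereas every term in the second is at most $g(j+1)<g(j)$; hence the first average strictly exceeds the second, which is exactly the required inequality. I do not expect a genuine obstacle here: the only point needing care is the strictness, and this is guaranteed by the single strict comparison $g(j)>g(j+1)$ that always separates the two blocks of indices (both of which are nonempty since $j-i\ge 1$ and $k-j\ge 1$).
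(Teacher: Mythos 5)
Your proof is correct: the reduction to the strictly decreasing first differences $g(m)=f(m)-f(m-1)$, followed by the comparison of the two block averages of $g$ over $\{i+1,\ldots,j\}$ and $\{j+1,\ldots,k\}$, establishes exactly the required slope inequality $\frac{f(j)-f(i)}{j-i}>\frac{f(k)-f(j)}{k-j}$, with strictness guaranteed by the single comparison $g(j)>g(j+1)$ separating the two nonempty blocks. The paper in fact leaves this lemma's proof to the reader, offering only the geometric remark that the polygonal chain through the points $(m,f(m))$ closed up by the base segment is a strictly convex polygon; your argument is the standard analytic rendering of that observation and correctly supplies the omitted details.
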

The conclusion of this Lemma just says that the closed polygonal chain obtained by connecting successive points of the graph of $f$ to which one adds the base segment $$[(0,f(0)),(d,f(d))]$$ is a (strictly) convex polygon in $\R^2$ lying ``above" the base segment. Its proof is elementary and left to the reader.

%

\begin{remark}\label{rem:Khovanskii}
The previous theorem generalises the Khovanskii-Teissier inequalities \cite{Teissier} which state the following:  let $\alpha,\beta\in H^{1,1}(X,\mathbb Z)$ be nef classes on a projective manifold $X$ of dimension $d$ and set 
$$ s_i: = \int_X \alpha^i \beta^{d-i} \text{ for } i=0,\ldots, d.$$
Then the function $i\mapsto s_i$ is log-concave.  To see how this follows from Theorem \ref{thm:logconcaveschur}, notice first that by continuity we may as well asssume that $\alpha,\beta$ are ample, and replacing $\alpha$ with a positive multiple if necessary (which does not change the statement) we may assume that $\mathcal O(\alpha)$ is very ample.  Thus there is a surjection $\mathcal O^{\oplus e+1} \to \mathcal O(\alpha)$ for some $e\ge d$, and dualizing gives a short exact sequence
$$0\to \mathcal O(-\alpha) \to \mathcal O^{\oplus {e+1}} \to E\to 0.$$
Then $E$ is nef, which is a limit of ample $\mathbb R$-bundles, and thus Theorem \ref{thm:logconcaveschur} implies the map $i\mapsto \int_X c_i(E) h^{d-i}$ is log-concave (but not necessarily strictly).  Finally since $c_i(E) =\alpha^i$ so $\int_X c_i(E) \beta^{d-i}  = s_i$ we have the Khovanskii-Teissier inequalities.
\end{remark}

\section{Schur polynomials of K\"ahler forms}\label{sec:Dihn}

Suppose $E$ splits as a sum of line bundle $E=\oplus_i L_i$ and set $a_i:= c_1(L_i)$.  Then $c(E) = \Pi_{i} (1+a_i)$ and the Schur classes $s_{\lambda}(E)$ are universal symmetric polynomials in the elementary classes $\{a_i\}$, which we write as $s_{\lambda}(a_1,\ldots,a_e)$.     Even without the vector bundle one can ask for the Hodge-Riemann property when the $a_i$ are replaced with K\"ahler classes:
 
 \begin{question}\label{question:Kahlerforms}
 Suppose that $\hat\omega_1,\ldots,\hat\omega_e$ are K\"ahler  classes 
 on a compact complex manifold $X$ of dimension $d$, and $(\lambda,e,d)$ are in the same range as required by Theorem \ref{thm:schurhodgeriemann}.   Does the class
 $$ s_{\lambda}(\hat\omega_1,\ldots,\hat\omega_e) \in H^{d-2,d-2}(X;\mathbb R)$$
 have the Hodge-Riemann property?
 \end{question}
 
By the main result of  Dinh-Nguy\^en \cite{Dinh2} one may relate this to  the following similar question in linear algebra.  Let $V$ be a $d$-dimensional complex vector space, let $V_\R$ its underlying real vector space and let $U$ be a lattice in $V$. 
Following \cite[Sections 1-2]{MumfordAbelianVarieties} we denote by $T:=\Hom_\C(V,\C)$, $\bar T:=\Hom_{\C-\operatorname{antilin}}(V,\C)$ and $T^{p,q}=\bigwedge^pT\otimes\bigwedge^q\bar T$ the spaces of $(1,0)$, $(0,1)$ and $(p,q)$-forms on $V$, respectively. 
Elements in $T^{p,p}$ may be viewed as sesquilinear forms on $\bigwedge^pV$. Such an element is said to be {\em real} if the corresponding form is Hermitian, and $T^{p,p}_\R$ denotes the space of real $(p,p)$-forms.
We say that an element $\omega$ in $T^{1,1}_\R$ is a {\em K\"ahler form} if for some choice of a basis for $V$ we can write
$$\omega=i\sum_{j=1}^dd z_j\wedge d \bar z_j.$$
We will denote by $K(V)$ the cone of K\"ahler forms on $V$. If a K\"ahler form $\omega$ has been fixed we will call the pair $(V, \omega)$ a {\em polarized vector space}.
Recall that in each $T^{p,p}_\R$ one has positive cones generated by forms of the type $i^{p^2}\alpha\wedge\bar\alpha$, for $\alpha\in T^{p,0}$. 
A positive $(p,p)$-form is said to be {\em strictly positive} if its restriction to any $p$-dimensional complex subspace of $V$ is non-zero. 
Any non-zero positive $(d,d)$-form $\eta$ is strictly positive and defines an  isomorphism $\int:T^{d,d}_\R\to\R$ which  preserves positivity. We will always assume this when using this notation. 
We say that an element $\omega$ in $T^{1,1}_\R$ is {\em integral}, respectively {\em rational}, if its imaginary part, which is an alternating skew-symmetric form on $V_\R$, takes values in $\Z$, respectively in $\Q$, on $U\times U$. 
 Finally for a polarized vector space $(V,\omega)$ an element $\Omega\in T^{d-2,d-2}_{\mathbb R}$ is said to have the \emph{Hodge-Riemann property} if $\int\Omega\wedge\omega^2>0$ and if the blinear form
$$(\alpha,\alpha')\mapsto \int\alpha \wedge \Omega\wedge  \alpha'$$
has signature $(1,d-1)$. 

We can now formulate the linear algebraic analogue of Question \ref{question:Kahlerforms}. 

\begin{question}\label{question:LinearAlgebra}
 Suppose that $\omega_1,\ldots,\omega_e$ are K\"ahler forms on a complex vector space $V$ of dimension $d$, and $(\lambda,e,d)$ are in the same range as required by Theorem \ref{thm:schurhodgeriemann}.  Does
 $$ s_{\lambda}(\omega_1,\ldots,\omega_e) \in T^{d-2,d-2}$$
 have the Hodge-Riemann property?
 \end{question}

If $X$ is the torus $V/U$ then using the natural isomorphisms $H^q(X, \Omega^p)\cong T^{p,q}$ one immediately sees that Question \ref{question:Kahlerforms} for the manifold $X$ is equivalent to Question \ref{question:LinearAlgebra} for the vector space $V$.  Since Chern classes of ample line bundles on $X$ are integer K\"ahler classes, we may use this observation in combination to Theorem \ref{thm:schurhodgeriemann} to get:

\begin{corollary} \label{cor:linearalgebra1}
Let $\omega_1,\ldots,\omega_e$ be rational K\"ahler forms on the $d$-dimensional complex vector space $V$ and  let $(\lambda,e,d)$ be in the same range as required by Theorem \ref{thm:schurhodgeriemann}.   Then the form 
$s_{\lambda}(\omega_1,\ldots,\omega_e)$ has the Hodge-Riemann property. In particular the linear map
$$T^{1,1}\to T^{d-1,d-1}, \ \eta\mapsto \eta\wedge s_{\lambda}(\omega_1,\ldots,\omega_e),$$ 
is invertible.
\end{corollary}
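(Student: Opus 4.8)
The plan is to deduce this from Theorem \ref{thm:schurhodgeriemann} by realizing the $\omega_i$ as first Chern classes of ample line bundles on the complex torus $X=V/U$, exploiting the identification $H^q(X,\Omega^p)\cong T^{p,q}$ recalled above, which is exactly the mechanism that makes Question \ref{question:Kahlerforms} and Question \ref{question:LinearAlgebra} equivalent. First I would reduce to the integral case. Since each $\omega_i$ is rational there is a positive integer $M$ with $M\omega_i$ integral for every $i$, and because $s_{\lambda}$ is homogeneous of degree $|\lambda|=d-2$ we have $s_{\lambda}(M\omega_1,\ldots,M\omega_e)=M^{d-2}s_{\lambda}(\omega_1,\ldots,\omega_e)$. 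As the Hodge-Riemann property is invariant under scaling by the positive number $M^{d-2}$, it suffices to treat the case in which every $\omega_i$ is an \emph{integral} K\"ahler form.

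Next I would pass to the torus $X=V/U$. A single integral K\"ahler form already satisfies the Riemann relations, so $X$ is an abelian variety and in particular projective. Under the identification $H^{1,1}(X;\R)\cong T^{1,1}_{\R}$ the integral classes correspond precisely to the integral forms, and by the Appell--Humbert theorem each positive integral form $\omega_i$ is the first Chern class of an ample line bundle $L_i$ on $X$. I would then set $E:=\bigoplus_{i=1}^e L_i$. A direct sum of ample line bundles is ample (standard), so $E$ is an ample vector bundle of rank $e$ whose Chern roots are exactly the $c_1(L_i)=\omega_i$; hence $s_{\lambda}(E)=s_{\lambda}(\omega_1,\ldots,\omega_e)$ under the identification $H^{d-2,d-2}(X;\R)\cong T^{d-2,d-2}_{\R}$.

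Since $(\lambda,e,d)$ lies in the admissible range, Theorem \ref{thm:schurhodgeriemann} applies to $E$ with respect to the ample class $h=\omega_1=c_1(L_1)$ and shows that $s_{\lambda}(E)$ has the Hodge-Riemann property on $X$. Because all cohomology of a torus is translation-invariant we have $H^{p,q}(X)=T^{p,q}$ on the nose, the wedge product corresponds to that on $\bigwedge^{\bullet}$, and $\int_X$ agrees with the functional $\int$ of Section \ref{sec:Dihn} up to the single positive factor $\operatorname{vol}(X)$; in particular the signature of the intersection pairing is preserved. Therefore $s_{\lambda}(\omega_1,\ldots,\omega_e)\in T^{d-2,d-2}$ has the Hodge-Riemann property. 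The ``in particular'' then follows from Remark \ref{rmk:HRandLgeneralremarks}(1),(6): the Hodge-Riemann property forces the pairing to be non-degenerate, which is equivalent to the complex map $\eta\mapsto\eta\wedge s_{\lambda}(\omega_1,\ldots,\omega_e)$ from $T^{1,1}$ to $T^{d-1,d-1}$ being an isomorphism.

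The main point requiring care is the compatibility of the isomorphism $H^q(X,\Omega^p)\cong T^{p,q}$ with all the Hodge-Riemann data, namely that the reality structure, the positivity cones, the wedge pairing, and the sign conventions in integration all match, so that the signature of the pairing computed on $X$ equals the one computed on $V$. This is precisely the equivalence of Questions \ref{question:Kahlerforms} and \ref{question:LinearAlgebra} noted before the corollary, but it should be spelled out that $\int_X$ and $\int$ differ only by the positive volume factor so that no signs are lost. A secondary, purely standard, point is the ampleness of $\bigoplus_i L_i$, which guarantees that Theorem \ref{thm:schurhodgeriemann} is genuinely applicable.
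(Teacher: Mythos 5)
Your proposal is correct and follows essentially the same route as the paper: scale the rational forms to integral ones, pass to the abelian variety $X=V/U$, realize the $\omega_i$ as first Chern classes of ample line bundles whose direct sum $E$ satisfies $s_{\lambda}(E)=s_{\lambda}(\omega_1,\ldots,\omega_e)$ up to a positive factor, and apply Theorem \ref{thm:schurhodgeriemann} together with the identification $H^q(X,\Omega^p)\cong T^{p,q}$. The paper leaves most of these steps implicit, so your spelling out of the scaling reduction, the Appell--Humbert step, and the compatibility of the pairings is a faithful expansion of the intended argument rather than a different one.
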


The theorem of  Dinh and Nguy\^en goes in the opposite direction. For bi-degree $(d-2,d-2)$ it says that the cohomology class of a closed smooth positive $(d-2,d-2)$-form $\Omega$ on compact K\"ahler manifold $X$ has the Hodge-Riemann property if for all $x\in X$ the form $\Omega(x)$ is in the \emph{Hodge-Riemann cone} of $T_{(X,x)}$, \cite[Theorem 1.1]{Dinh}. 
They define the Hodge-Riemann cone $HR^{d-2,d-2}\subset T^{d-2,d-2}_\R$ for a polarized vector space $(V,\omega)$  of dimension $d$ by saying that   a $(d-2,d-2)$-form $\Omega$ lies in $HR^{d-2,d-2}$ if there exists a continuous deformation $\Omega_t\in T^{d-2,d-2}_R$, $t\in[0,1]$, such that $\Omega_0=\Omega$, $\Omega_1=\omega^{d-2}$, $\Omega_t\wedge\omega^2\neq0$ for all $t\in[0,1]$ and the map
\begin{equation}
T^{1,1}\to T^{d-1,d-1}, \ \eta\mapsto \eta\wedge\Omega_t
\label{eq:HRCone1}\end{equation}
is an isomorphism for all  $t\in[0,1]$.

Thus we see that an affirmative answer to Question \ref{question:LinearAlgebra} for a triple $(\lambda, e,d)$ implies an affirmative answer to Question \ref{question:Kahlerforms} for the same triple. 

We now answer  Question \ref{question:Kahlerforms} affirmatively in the special case when $e=2$ and $s_{\lambda} = s_{(1,1,\ldots,1)}$  and hope to consider  the general case in the future. We note that in degree $k$ the class $ s_{(1,1,\ldots,1)}(E)$ for a vector bundle is the $k$-th Segre class of its  dual, $s_k(E^*)$, \cite[Example 8.3.5]{Lazbook2}.

%

\begin{proposition}\label{prop:Segre}
Let $X$ be a compact K\"ahler manifold of dimension $d$ and let $\hat\omega_1$, $\hat\omega_2$ be K\"ahler classes on $X$. Then the  Schur class $s_{(1,1,\ldots,1)}(\hat\omega_1,\hat\omega_2)$ of degree $d-2$ has the Hodge-Riemann property.  
\end{proposition}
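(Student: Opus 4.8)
The plan is to reduce the statement to the linear-algebraic Question \ref{question:LinearAlgebra} via the theorem of Dinh--Nguy\^en quoted above, and then to settle that pointwise statement by an explicit diagonalisation. First I would represent $\hat\omega_1,\hat\omega_2$ by genuine K\"ahler forms $\omega_1,\omega_2$, so that
$$\Omega:=s_{(1,\dots,1)}(\hat\omega_1,\hat\omega_2)=\sum_{j=0}^{d-2}\hat\omega_1^{\,d-2-j}\hat\omega_2^{\,j}$$
is represented by the \emph{strictly positive} $(d-2,d-2)$-form $\sum_{j}\omega_1^{d-2-j}\wedge\omega_2^{j}$. By \cite[Theorem 1.1]{Dinh} it then suffices to prove that at each point the corresponding form on the tangent space lies in the Hodge--Riemann cone $HR^{d-2,d-2}$; this is exactly Question \ref{question:LinearAlgebra} for the triple $((1,\dots,1),2,d)$, now on a single polarized vector space $(V,\omega)$.

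For the linear-algebra statement I would first record the identity
$$\sum_{j=0}^{d-2}\omega_1^{d-2-j}\omega_2^{j}=(d-1)\int_0^1\omega(u)^{d-2}\,du,\qquad \omega(u):=(1-u)\omega_1+u\omega_2,$$
which follows from a Beta-integral and in which every $\omega(u)$ is again a K\"ahler form. To verify membership in $HR^{d-2,d-2}$ I would use the deformation $\omega_2\rightsquigarrow\omega_1$ along the segment of K\"ahler forms: this keeps $\Omega$ of the same shape (a positive average of $(d-2)$-th powers of K\"ahler forms) and ends at the positive multiple $(d-1)\,\omega_1^{d-2}$, which lies in the cone by the classical Hard Lefschetz theorem. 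Thus everything reduces to showing that the multiplication map $\eta\mapsto\eta\wedge\Omega$ on $T^{1,1}$ remains an isomorphism along the whole deformation, i.e. to \emph{Hard Lefschetz (nondegeneracy)} for every such average.

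To analyse this I would simultaneously diagonalise the two forms, writing $\omega_1=\sum_k\theta_k$ and $\omega_2=\sum_k\lambda_k\theta_k$ with $\theta_k=i\,dz_k\wedge d\bar z_k$ and $\lambda_k>0$. A direct computation then shows that the bilinear form $(\eta,\eta')\mapsto\int\eta\wedge\Omega\wedge\eta'$ on $T^{1,1}_{\mathbb R}$ is block-diagonal: it is negative definite on the off-diagonal entries $\eta_{pq}$ $(p\neq q)$, and orthogonal to these it restricts on the diagonal $\mathbb R^{d}=\{\operatorname{diag}(x_1,\dots,x_d)\}$ to the form with matrix
$$W_{pq}=(d-1)!\int_0^1\prod_{k\neq p,q}\mu_k(u)\,du\ \ (p\neq q),\qquad W_{pp}=0,\qquad \mu_k(u):=(1-u)+u\lambda_k .$$
Hence the whole form has the Hodge--Riemann signature if and only if $W$ has signature $(1,d-1)$, and nondegeneracy of the whole form is equivalent to nondegeneracy of $W$. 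Writing $W/(d-1)!=I-D$, where $I_{pq}=\langle 1/\mu_p,1/\mu_q\rangle$ is the Gram matrix of the functions $1/\mu_k$ in $L^2\big([0,1],\prod_k\mu_k\,du\big)$ and $D=\operatorname{diag}(I_{pp})$, this becomes the assertion that the associated correlation matrix has at most one eigenvalue exceeding $1$.

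The main obstacle is precisely this last inertia statement. It is \emph{false} for a general symmetric matrix with zero diagonal and positive off-diagonal entries (such a matrix can have signature $(2,d-2)$), and indeed neither the Hard Lefschetz nor the Hodge--Riemann property is preserved under the sums/averages that produce $\Omega$ (cf. Remarks \ref{rmk:HRandLgeneralremarks}(7)). One must therefore exploit the special structure of the family: the functions $u\mapsto 1/\mu_k(u)=1/((1-u)+u\lambda_k)$ come from the totally positive Cauchy kernel; equivalently, for each fixed $u$ the integrand $\sum_{p\neq q}x_px_q\prod_{k\neq p,q}\mu_k(u)$ is the Lorentzian second mixed elementary symmetric form (an instance of Alexandrov's inequality for mixed discriminants). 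The crux is to show that averaging these Lorentzian forms over $u\in[0,1]$ cannot create a second positive eigenvalue --- this is where total positivity (Alexandrov--Fenchel-type positivity) of the particular family enters, and it is the heart of the argument. Granting nondegeneracy of $W$ for all $\lambda\in(0,\infty)^d$, the signature is then pinned to $(1,d-1)$ by continuity from the diagonal case $\lambda=(1,\dots,1)$, where $W=(d-1)!\,(\mathbf 1\mathbf 1^{t}-\operatorname{Id})$ manifestly has signature $(1,d-1)$; this yields membership in $HR^{d-2,d-2}$ and, through Dinh--Nguy\^en, the Proposition.
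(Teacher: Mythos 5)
Your reduction to the linear-algebra statement (Question \ref{question:LinearAlgebra}) via Dinh--Nguy\^en, the identity $\sum_{j=0}^{d-2}\omega_1^{d-2-j}\omega_2^{j}=(d-1)\int_0^1\omega(u)^{d-2}\,du$, and the block decomposition after simultaneous diagonalisation are all fine: the off-diagonal block is indeed negative definite and everything hinges on the inertia of the matrix $W$. But that is exactly where your proof stops. The assertion that $W$ --- equivalently, the average over $u\in[0,1]$ of the Lorentzian forms $\sum_{p\neq q}x_px_q\prod_{k\neq p,q}\mu_k(u)$ --- has signature $(1,d-1)$ \emph{is} the Proposition in this formulation, and you prove nothing about it. You correctly observe that sums of Lorentzian forms need not be Lorentzian (Remark \ref{rem:convexcombinations} exhibits $\omega_1^2+a\omega_2^2$ with signature $(2,14)$ on a fourfold, so the danger is real and not a technicality), you name total positivity of the Cauchy kernel as the mechanism that should rescue the situation, and then write ``granting nondegeneracy of $W$ for all $\lambda$''. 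That granted statement is the entire content of the result; the concluding continuity argument is only as good as the nondegeneracy claim it rests on. As it stands this is a genuine gap, not a routine verification left to the reader.

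For comparison, the paper sidesteps this linear algebra entirely. After reducing to a torus it takes $\hat\omega_i=c_1(H_i)$ for ample line bundles $H_i$ on an abelian variety $Y$ (treating the rational case first), sets $E=H_1\oplus H_2$ and $P=\mathbb P_Y(E)$ with tautological class $\xi=c_1(\cO_{\P(E)}(1))$, and uses $\pi_*(\xi^{d-1})=(-1)^{d-2}s_{d-2}(E)=\sum_j\hat\omega_1^{d-2-j}\hat\omega_2^{j}$ together with the projection formula: the quadratic form of the Segre class on $Y$ is the restriction to $\pi^*H^{1,1}(Y)$ of the quadratic form of $\xi^{d-1}$ on the $(d+1)$-dimensional manifold $P$. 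Since $\xi$ is ample, the classical Hodge--Riemann relations hold on $P$, and condition (4) of Definition-Lemma \ref{deflemma:HRofquadraticform} transfers the property down to $Y$. The irrational case is then handled by perturbing to $(\omega_1+\epsilon,\tilde\omega_2+\epsilon)$ with $\tilde\omega_2$ rational and performing an explicit diagonal change of coordinates that carries this pair back to $(\omega_1,\omega_2)$. If you wish to keep your route you must actually establish the inertia statement for $W$; otherwise the projective-bundle argument is the one that closes the proof.
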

\begin{proof}
Set $k=d-2$ and let $\omega$ be any K\"ahler form on $X$. We note that if $\omega_1$ and $\omega_2$ are strictly positive $(1,1)$-forms then 
$(-1)^{d-2}s_{d-2}(\omega_1,\omega_2)=\sum_{j=0}^{d-2}\omega_1^{d-2-j}\omega_2^j$ is also strictly positive. By the above consideration our question reduces itself to the corresponding linear algebraic Question \ref{question:LinearAlgebra}.

 So let $V$ be a complex vector space of dimension $d$ and $U$ be a lattice in $V$ as in the above discussion. 
It is then enough to show that $(-1)^{d-2}s_{d-2}(\omega_1,\omega_2)$ has the Hodge-Riemann property for all strictly positive $(1,1)$-forms $\omega_1, \omega_2\in V^{1,1}=V\otimes \bar V$.  

Using harmonic representatives with respect to the flat metric the above question is equivalent to showing that for any  two K\"ahler classes $\hat\omega_1$, $\hat\omega_2$ on the abelian variety $Y:=\C^d/(\Z^d+i\Z^d)$ the Segre class 
$(-1)^{d-2}s_{d-2}(\hat\omega_1,\hat\omega_2)$ has the Hodge-Riemann property. 
If $\hat\omega_1$, $\hat\omega_2$ have integer coefficients, they are the first Chern classes of two ample line bundles $H_1$ and $H_2$ on $Y$.
We consider their direct sum $E:=H_1\oplus H_2$ and the projective bundle $P:=\P_Y(E)$, with projection 
$\pi:\P_Y(E)\to Y$. 
The Chern class $\xi:=c_1(\cO_{\P(E)}(1))$ of the tautological quotient bundle $\cO_{\P(E)}(1)$ on $\P_Y(E)$ is ample and one has $\pi_*(\xi^{j+1})=(-1)^{j}s_j(E)$, for all $j\in\N$, \cite[Section 3.1]{FultonIT}. Thus the quadratic forms $Q_P$ and $Q_Y$ defined on $H^{1,1}(P, \R)$ and on $H^{1,1}(Y, \R)$ respectively by
$$Q_P(\eta):=\int_P\xi\eta^2, \  Q_Y(\alpha):=\int_Y(-1)^{d-2}s_{d-2}(E)\alpha^2$$
compare using the projection formula giving
$$Q_Y(\alpha)=Q_P(\pi^*\alpha).$$
Noting that $Q_P$ has the Hodge-Riemann property, that $\pi^*$ is injective on $H^{1,1}(Y)$ and that $Q_P(\pi^*h)$ is positive for any ample class $h$ on $Y$, we see by the fourth condition of  our Definition-Lemma \ref{deflemma:HRofquadraticform} that $Q_Y$ has the Hodge-Riemann property as well. 
Moreover if  $\hat\omega_1$, $\hat\omega_2$ are integer classes on   $Y$ as above and if $\hat\epsilon$ is any $(1,1)$ class such that  $\hat\omega_1+\hat\epsilon$ and $\hat\omega_2+\hat\epsilon$ lie in the K\"ahler cone of $Y$, the twisted vector bundle $E\langle\hat\epsilon\rangle:=(H_1\oplus H_2)\langle\hat\epsilon\rangle$ is ample on $Y$ and the twisted line bundle $\cO_{\P(E)}(1)\langle\pi^*\hat\epsilon\rangle$ is ample on $P$.  
Thus $(\xi+ \pi^*\hat\epsilon)^{d-1}$ has the 
 Hodge-Riemann property on $P$ and by the same argument as above
 $\pi_*((\xi+ \pi^*\hat\epsilon)^{d-1})$  has the 
 Hodge-Riemann property on $Y$. 
 
 Now a direct computation gives  
 $\pi_*(\xi^{j+1})=(-1)^{j}s_j(E)$ for all $j\in\N$ and 
 $$\pi_*((\xi+ \pi^*\hat\epsilon)^{d-1})=
 \pi_*( \sum_{j=0}^k \binom{k+1}{j+1} \xi^{j+1}( \pi^*\hat\epsilon)^{k-j})= $$
 $$\sum_{j=0}^k \binom{k+1}{j+1} (-1)^{j}s_j(E)\hat\epsilon^{k-j} = 
 (-1)^{d-2}s_{d-2}(E\langle\hat\epsilon\rangle)=
 (-1)^{d-2}s_{d-2}(\hat\omega_1+\hat\epsilon,\hat\omega_2+\hat\epsilon),$$
 cf. \cite[Example 3.1.1]{FultonIT}, hence 
 the class $(-1)^{d-2}s_{d-2}(\hat\omega_1+\hat\epsilon,\hat\omega_2+\hat\epsilon)$ has the Hodge-Riemann property on $Y$. 

Going back to the problem dealing with arbitrary  $(1,1)$-forms $\omega_1, \omega_2\in V^{1,1}$ we remark that by a change of coordinates we may always simultaneously diagonalize $\omega_1$ and $\omega_2$ to obtain 
$\omega_1=i\sum_{j=1}^dd z_j\wedge d \bar z_j$, 
$\omega_2=i\sum_{j=1}^d\lambda_j d z_j\wedge d \bar z_j$ with $\lambda_j>0$. If the coefficients $\lambda_j$ are all rational, we are done. Otherwise let us choose for each $j$ some rational number $\tilde \lambda_j$ close to $\lambda_j$. When   $\lambda_j$ is rational we will take $\tilde \lambda_j$ equal to $\lambda_j$. Put $\tilde \omega_2=i\sum_{j=1}^d\tilde \lambda_j d z_j\wedge d \bar z_j$. 

By what we have  just seen  if $\epsilon$ is any real $(1,1)$-form such that $\omega_1+\epsilon$ and $\tilde \omega_2+\epsilon$ are strictly positive, the form 
$(-1)^{d-2}s_{d-2}(\omega_1+\epsilon,\tilde \omega_2+\epsilon)$ has the Hodge-Riemann property. We set
$\epsilon_j:=\frac{\lambda_j-\tilde \lambda_j}{\lambda_j-1}$ if $\lambda_j\neq1$, and $\epsilon_j:=0$ otherwise. Clearly $\epsilon_j$ tends to zero when $\tilde \lambda_j$  tends to $\lambda_j$.
Moreover $(\tilde \lambda_j-\epsilon_j)(1-\epsilon_j)^{-1}=\lambda_j$ for all $j$.
Consider now the $(1,1)$-form  $\epsilon:= -\sum_{j=1}^d\epsilon_j i d z_j\wedge d \bar z_j$. 
Next we check that we may act on the pair $(\omega_1+\epsilon,\tilde \omega_2+\epsilon)$ again by coordinate change in order to bring it to the form $(\omega_1, \omega_2)$ when written with respect to the new coordinates. This will end the proof of the Proposition. 
If $M(\omega)$ is the hermitian matrix of the coefficients of a real $(1,1)$-form $\omega$, a coordinate change on $\omega$ will transform $M(\omega)$ into 
$\bar P^tM(\omega)P$ where $P$ is the base change matrix. We reach our desired coordinate change by taking $P$ to be the diagonal matrix with  diagonal entries $(1-\epsilon_j)^{-\frac{1}{2}}$ for $j\in\{1,...,d\}$. 
\end{proof}

As above this yields the following linear algebra consequence:

\begin{corollary}\label{cor:Segre}
Let $\omega_1,\omega_2$ be K\"ahler forms on a $d$-dimensional complex vector space $V$.   Then
$$\omega_1^{d-2} + \omega_1^{d-3}\wedge \omega_2 + \cdots + \omega_2^{d-2} \in V^{d-2,d-2}$$
has the Hodge-Riemann property.
\end{corollary}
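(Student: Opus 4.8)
The plan is to transfer the Kähler-manifold statement of Proposition \ref{prop:Segre} to the vector space $V$ by passing to a complex torus, exactly as Corollary \ref{cor:linearalgebra1} was deduced from Theorem \ref{thm:schurhodgeriemann}. The point is that the heavy lifting has already been done: Proposition \ref{prop:Segre} provides the Hodge-Riemann property for the relevant class on any compact Kähler manifold, and the torus $V/U$ converts this into the desired statement on $V$.

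First I would identify the displayed form with a Schur form. As recorded in the remark preceding Proposition \ref{prop:Segre}, the degree $d-2$ Schur form associated to the partition $(1,\ldots,1)$ is the complete homogeneous expression
$$ s_{(1,1,\ldots,1)}(\omega_1,\omega_2) = \sum_{j=0}^{d-2}\omega_1^{d-2-j}\wedge\omega_2^{j} = \omega_1^{d-2} + \omega_1^{d-3}\wedge\omega_2 + \cdots + \omega_2^{d-2}. $$
Thus the class in the corollary is precisely $s_{(1,\ldots,1)}(\omega_1,\omega_2)\in T^{d-2,d-2}$, which is the object of Question \ref{question:LinearAlgebra} in the case $e=2$, $\lambda=(1,\ldots,1)$.

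Next I would fix any lattice $U\subset V$ and form the complex torus $X:=V/U$ of dimension $d$. Under the canonical isomorphisms $H^q(X,\Omega^p)\cong T^{p,q}$ used throughout \S\ref{sec:Dihn}, every cohomology class on a torus has a unique translation-invariant representative; in particular the Kähler forms $\omega_1,\omega_2$ on $V$ determine Kähler classes $\hat\omega_1,\hat\omega_2\in H^{1,1}(X;\R)$, and the form $s_{(1,\ldots,1)}(\omega_1,\omega_2)\in T^{d-2,d-2}$ corresponds to the class $s_{(1,\ldots,1)}(\hat\omega_1,\hat\omega_2)\in H^{d-2,d-2}(X;\R)$. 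This identification matches the intersection pairing $\int_X$ with the map $\int\colon T^{d,d}_\R\to\R$, and carries the primitive subspace, the positivity condition $\int\Omega\wedge\hat\omega^2>0$, and the signature of the bilinear form on one side to the other. Consequently, the Hodge-Riemann property for the class on $X$ is equivalent to the Hodge-Riemann property for the form on $V$.

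Finally I would invoke Proposition \ref{prop:Segre}, which asserts that $s_{(1,\ldots,1)}(\hat\omega_1,\hat\omega_2)$ has the Hodge-Riemann property on $X$; transporting this back along the torus identification yields the corollary. I expect no genuine obstacle here, since the content is entirely contained in Proposition \ref{prop:Segre}, whose own proof in fact already descends to, and establishes, precisely this linear-algebra statement for an arbitrary lattice $U\subset V$ (the irrational case being handled there by simultaneous diagonalization and a rational approximation argument). The only step requiring care is the bookkeeping that $H^{p,q}(X)\cong T^{p,q}$ respects all the Hodge-Riemann data, and this is immediate because on a torus the translation-invariant forms compute cohomology and wedge products and integration are preserved under the identification.
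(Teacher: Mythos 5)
Your proposal is correct and follows the paper's own route: the paper deduces Corollary \ref{cor:Segre} from Proposition \ref{prop:Segre} via exactly the torus correspondence $H^q(V/U,\Omega^p)\cong T^{p,q}$ you describe (indeed, the proof of Proposition \ref{prop:Segre} already reduces to and establishes this linear-algebra statement directly). Your identification of the displayed sum with $s_{(1,\ldots,1)}(\omega_1,\omega_2)$ and the bookkeeping that the identification preserves the integration pairing and signature are exactly what the paper's ``as above'' is invoking.
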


Finally we observe that an easy consequence of Proposition \ref{prop:Segre} is the following injectivity statement which was first noticed in \cite[Proposition 1.1]{FuXiao} (and in \cite[Proposition 6.5]{GrebToma} in the projective case).
\begin{corollary}\label{cor:injectivity}
Let $K(X)\subset H^{1,1}(X)$ be the K\"ahler cone of a compact K\"ahler manifold $X$ of dimension $d$. Then the map $K(X)\to  H^{d-1,d-1}(X)$, $\hat\omega\mapsto\hat\omega^{d-1}$, is injective. 
\end{corollary}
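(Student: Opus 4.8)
The plan is to reduce the injectivity to the Hard Lefschetz property that Proposition \ref{prop:Segre} supplies, via the elementary difference-of-powers factorization.

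First I would take two K\"ahler classes $\hat\omega_1,\hat\omega_2\in K(X)$ satisfying $\hat\omega_1^{d-1}=\hat\omega_2^{d-1}$, with the goal of deducing $\hat\omega_1=\hat\omega_2$. Writing
$$\Omega:=\hat\omega_1^{d-2}+\hat\omega_1^{d-3}\wedge\hat\omega_2+\cdots+\hat\omega_2^{d-2}\in H^{d-2,d-2}(X;\mathbb R),$$
the key observation is the telescoping identity
$$(\hat\omega_1-\hat\omega_2)\wedge\Omega=\hat\omega_1^{d-1}-\hat\omega_2^{d-1},$$
whose right-hand side vanishes by hypothesis. Thus $(\hat\omega_1-\hat\omega_2)\wedge\Omega=0$.

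Next I would invoke Proposition \ref{prop:Segre}: its proof identifies $\Omega$ with $(-1)^{d-2}s_{(1,\ldots,1)}(\hat\omega_1,\hat\omega_2)$ and shows this class has the Hodge-Riemann property. By Remark \ref{rmk:HRandLgeneralremarks}(1) the Hodge-Riemann property implies the Hard Lefschetz property, so the map
$$H^{1,1}(X;\mathbb R)\to H^{d-1,d-1}(X;\mathbb R),\qquad\alpha\mapsto\alpha\wedge\Omega,$$
is an isomorphism; in particular it is injective. The degree bookkeeping checks out, since here $k=d-2$, so $d-k=2$ is even and the Lefschetz operator indeed sends $H^{1,1}$ isomorphically onto $H^{d-1,d-1}$; the overall sign $(-1)^{d-2}$ relating $\Omega$ to the Schur class is irrelevant, as rescaling $\Omega$ by a nonzero constant does not affect whether this map is an isomorphism. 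Applying injectivity to $\alpha=\hat\omega_1-\hat\omega_2$ then yields $\hat\omega_1=\hat\omega_2$.

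I do not expect any genuine obstacle here once Proposition \ref{prop:Segre} is in hand: the entire content is spotting the factorization that converts equality of $(d-1)$-st powers into a relation annihilated by the injective Lefschetz operator attached to $\Omega$. The substantive work, namely establishing the Hodge-Riemann property of the Segre-type class $\Omega$, has already been carried out.
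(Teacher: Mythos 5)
Your proof is correct and is essentially identical to the paper's: both use the telescoping factorization $\hat\omega_1^{d-1}-\hat\omega_2^{d-1}=(\hat\omega_1-\hat\omega_2)\wedge s_{(1,\ldots,1)}(\hat\omega_1,\hat\omega_2)$ together with the Hard Lefschetz property of this Segre-type class supplied by Proposition \ref{prop:Segre}. No issues.
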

\begin{proof}
The statement follows directly from the fact that $s_{(1,1,...,1)}(\hat\omega_1,\hat\omega_2)$ has the Lefschetz property when $\hat\omega_1,\hat\omega_2\in K(X)$, noting that
$$\hat\omega_1^{d-1}-\hat\omega_2^{d-1}=
(\hat\omega_1-\hat\omega_2)(\sum_{j=0}^{d-2}\hat\omega_1^{d-2-j}\hat\omega_2^j)=(\hat\omega_1-\hat\omega_2)s_{(1,1,...,1)}(\hat\omega_1,\hat\omega_2).$$
\end{proof}

 \section{Questions and extensions}\label{sec:questions}
 
\subsection{The Hodge-Riemann property for other degrees } 
We have focused purely on the case $|\lambda| = \dim X-2$.    Example \ref{ex: higher codimension} shows that for higher degrees the natural generalization of the Hodge-Riemann property as defined in  \cite{Dinh2}, \cite{Dinh} does not hold for Schur classes of ample vector bundles in general. Nevertheless the following question is natural:
 
\begin{question}
What can be said about the intersection form 
$$\mathcal Q (\alpha,\alpha') = \int_X \alpha {s_{\lambda}(E)} \alpha'  \text{ for } \alpha,\alpha' \in H^{j,j}(X)$$
where $E$ is an ample bundle and $|\lambda| = \dim X-2j$ with $j\ge 2$? 
\end{question}

\begin{example}\label{ex: higher codimension}
Let 
$ X = \mathbb P^2\times \mathbb P^2\times \mathbb P^2$. For $i=0,1,2$ let $\pi_i$ be the projection to the $i$-th factor and let $x_i$ denote the  hyperplane class on each factor.    By the Kunneth formula $H^{2,2}(X)$ has 
$ \{x_0^2, x_1^2, x_2^2, x_1x_2, x_0x_2,  x_0x_1 \}$ as basis.
Now let
$$ E = \pi_0^* \mathcal O_{\mathbb P^2}(1)  \oplus  \pi_1^* \mathcal O_{\mathbb P^2}(1)  \oplus  \pi_2^* \mathcal O_{\mathbb P^2}(1).$$
Then $E$ is nef but not ample and
$ c_2(E) =x_1x_2 +  x_2x_0+ x_0x_1, \ c_1(E) = x_0 + x_1 + x_2.$
For  $t\ge 0$ consider the $\mathbb R$-twisted vector bundle
$$E_t:= E\langle t \det(E)\rangle$$
which is ample for $t>0$. Consider further the intersection forms 
$$\mathcal Q_t(\alpha,\alpha'): = \int_X \alpha c_2(E_t) \alpha ' \text{ for } \alpha,\alpha'\in H^{2,2}(X),$$
$$\mathcal R(\alpha,\alpha'): = \int_X \alpha c_2(E) \alpha ' \text{ for } \alpha,\alpha'\in H^{2,2}(X),$$
$$\mathcal S(\alpha,\alpha'): = \int_X \alpha c_1(E)^2 \alpha ' \text{ for } \alpha,\alpha'\in H^{2,2}(X).$$
Since 
$$c_2(E_t) = c_2(E) + (2t + 3t^2) c_1(E)^2$$
we get 
$$\mathcal Q_t = \mathcal R +  (2t + 3t^2) \mathcal S.$$
One checks by direct calculation that the determinant of the associated matrices with respect to the given basis is negative for $\mathcal R $ and positive for $\mathcal S$.  Hence there is some $t>0$ for which $\mathcal Q_t$ is singular, and thus the Hard Lefschetz property and consequently also the Hodge-Riemann property fail for $c_2(E_t)$.

Note this does not contradict  the Bloch-Gieseker Theorem \ref{thm:blochgiesekerI} since $E_t$ has rank 3.
\end{example}
 
\subsection{Combinations of Schur Classes} \label{sec:convexcombinations} Using the material in \cite[3c]{FultonLazarsfeld} one can extend our main result easily to monomials of Schur classes of possibly different ample bundles.  

To see this, let $E_1,\ldots, E_r$ be ample bundles on a projective manifold $X$ and $\lambda_1,\ldots,\lambda_r$ be partitions with $\sum_{j=1}^r |\lambda_i| = d-2$.  Suppose $\rank(E_j)\ge |\lambda_j|$ for all $j$.  Then   for each $j=1,\ldots,r$ we can construct just as in section \ref{sec:Schur} a cone $C_j\subset \Hom(V_j,E_j)=:F_j$  where $V_j$ is a fixed vector space.  Since each $C_j$ is flat over $X$ there is a product cone 
$$ C: = \Pi_{j=1}^r C_j\subset \oplus_{j=1}^r F_j$$
with the property that
$$\int_{X} \alpha \left( \Pi_{j=1}^r s_{\lambda_j}(E_j)\right) \alpha' = \int_{[C]} (\pi^* \alpha)c_{N-2}(U) (\pi^* \alpha')$$
where $U$ is the tautological bundle on $\mathbb P(F) = \mathbb P(\oplus_{j=1}^r F_j)$ and $N: = \dim C$.  Thus Theorem \ref{thm:maincone} implies that the class
 $$\Pi_{j=1}^r s_{\lambda_j}(E_j) \in H^{d-2,d-2}(X)$$
 has the Hodge-Riemann property.  Observe in particular that if each $E_j$ has rank $1$ we get that
 $$c_1(E_1)\wedge \cdots \wedge c_{1}(E_{d-2})$$
 has the Hodge-Riemann property, as proved by Gromov in the K\"ahler case (see Remark \eqref{rmk:HRandLgeneralremarks}(4)).

 \begin{remark}\label{rem:convexcombinations}
Note that  arbitrary convex combinations of monomials of Schur classes of several ample vector bundles bundles need not have the Hodge-Riemann property. Indeed this can already be seen for a combination of the type $c_1^2(L_1)+ac_1^2(L_2)$, where $L_1$, $L_2$ are ample line bundles on a $4$-dimensional abelian variety. An example is obtained by taking $d=4$ and $\omega_1$, $\omega_2$ as in the proof of Proposition \ref{prop:Segre} with $\lambda_1=\lambda_2=\frac{1}{7}$, $\lambda_3=\lambda_4=2$ and by considering $\Omega_a:=\omega_1^2 +a\omega_2^2$. Then the bilinear form $(\alpha,\alpha')\mapsto \int\alpha \wedge \Omega_a\wedge  \alpha'$ on $T^{1,1}_\R$ has signature $(1,15)$ for $a\in[0,3[\cup]\frac{49}{12},\infty]$, is degenerate for  $a\in\{3, \frac{49}{12}\}$ and has signature $(2,14)$ for $a\in]3, \frac{49}{12}[$. 
 \end{remark}

\begin{question}\label{question:convex}
Is it possible to describe the collection of tuples $\{a_{\lambda}\}$ of non-negative numbers such that 
$$\sum_{|\lambda|=d-2} a_{\lambda} s_{\lambda}(E)$$
 has the Hodge-Riemann property for all ample vector bundles $E$ of rank at least $d-2$?
  \end{question}

The only case we can answer this completely is when $d=4$.   For then there are two Schur classes, $c_2$ and $c_1^2-c_2$, and we know that $c_2(E\langle tc_1(E)\rangle$ and $(c_1^2-c_2)(E\langle tc_1(E))$ have the Hodge-Riemann property for all $t\ge 0$.  Together these imply that any convex combination of $c_2(E)$ and $(c_1^2-c_2)(E)$ has the Hodge-Riemann property.

The following example shows that  in higher dimension there can be some constraint on the $a_{\lambda}$ (beyond requiring them to be all non-negative).   Let $X=\mathbb P^2\times \mathbb P^3$  Then $N^1(X)$ is two-dimensional, with generators $a,b$ that satisfy $a^3=0$, $a^2b^3=1$.
Set
$\mathcal O_X(a,b) = \mathcal O_{\mathbb P_2}(a) \boxtimes \mathcal O_{\mathbb P^3}(b)$ and consider the nef vector bundle 
$$ E = \mathcal O(1,0) \oplus \mathcal O(1,0) \oplus \mathcal O(0,1).$$
Then an elementary computation, left to the reader, shows that the class
$$ (1-t) c_3(E) + t s_{(1,1,1)}(E)$$
gives an intersection form on $N^1(X)$ with matrix 
$$Q_t:= \left(\begin{array}{cc} t&2t \\ 2t&1+2t \end{array}\right).$$
One observes that for $t\in (0,1/2)$ the matrix $Q_t$ has two strictly positive eigenvalues.   Thus fixing $t\in (0,1/2)$, any small pertubation of $E$ by an ample class gives an ample $\mathbb R$-twisted bundle $E'$ so that $(1-t)c_3(E') + t s_{(1,1,1)}(E')$ does not have the Hodge-Riemann property.

\subsection{The non-projective case}   Assume $X$ is a K\"ahler manifold of dimension $d$.  Then Demailly-Peternel-Schneider \cite[Proposition 2.3]{DemaillyPeternellSchneider} has shown that for any nef vector bundle $E$ on $X$ the non-strict inequality
 $$\int_X s_{\lambda}(E) \ge 0$$
holds for any partition with $|\lambda|=d$. 

 \begin{question}
What can be said for $s_{\lambda}(E)$ when $|\lambda|= d-2$ and $X$ is K\"ahler of dimension $d$ but non-projective.     For instance is there a version of the Hodge-Index inequality \eqref{eq:hodgeindexcn-2definitive}, or the related inequalities \eqref{eq:hodgeindexiii}, \eqref{eq:logconcaveI} for $E$ nef in the K\"ahler setting?
 \end{question}

\subsection{Borderline case for the higher-rank Khovanskii-Teiss\-ier} 
An easy consequence of the Hodge-Index Theorem \cite[Theorem 6.2]{Voisin} is that if $\alpha,\beta\in H^{1,1}(X,\mathbb R)$ are ample, and are on the borderline of the Khovanskii-Teissier inequality (by which we mean the function $i\mapsto \log\int_X \alpha^i \beta^{d-i}$ is affine) then $\alpha$ and $\beta$ are proportional.  Teissier asks \cite[p96]{TeissierBonnesen} if this remains true when $\alpha,\beta$ are merely nef and big, which has been answered positively by Boucksom-Favre-Jonsson \cite{BoucksomFavreJonsson},  Cutkosky \cite{Cutkosky} and Fu-Xiao \cite{FuXiaoProportionality}.

\begin{question}
Can one characterize those  nef vector  bundles $E$ such that the map $i\mapsto\log\int_X c_i(E) c_1(E)^{d-i}$ is affine?
\end{question}


\providecommand{\bysame}{\leavevmode\hbox to3em{\hrulefill}\thinspace}
\providecommand{\MR}{\relax\ifhmode\unskip\space\fi MR }
\providecommand{\MRhref}[2]{%
  \href{http://www.ams.org/mathscinet-getitem?mr=#1}{#2}
}
\providecommand{\href}[2]{#2}

\end{document}